\renewcommand{\epsilon}{\varepsilon}
\numberwithin{equation}{section}
\newtheoremstyle{thmlemcorr}{10pt}{10pt}{\itshape}{}{\bfseries}{.}{10pt}{{\thmname{#1}\thmnumber{ #2}\thmnote{ (#3)}}}
\newtheoremstyle{thmlemcorr*}{10pt}{10pt}{\itshape}{}{\bfseries}{.}\newline{{\thmname{#1}\thmnumber{ #2}\thmnote{ (#3)}}}
\newtheoremstyle{defi}{10pt}{10pt}{\itshape}{}{\bfseries}{.}{10pt}{{\thmname{#1}\thmnumber{ #2}\thmnote{ (#3)}}}
\newtheoremstyle{remexample}{10pt}{10pt}{}{}{\bfseries}{.}{10pt}{{\thmname{#1}\thmnumber{ #2}\thmnote{ (#3)}}}
\newtheoremstyle{ass}{10pt}{10pt}{}{}{\bfseries}{.}{10pt}{{\thmname{#1}\thmnumber{ A#2}\thmnote{ (#3)}}}
\theoremstyle{thmlemcorr}
\newtheorem{theorem}{Theorem}
\numberwithin{theorem}{section}
\newtheorem{lemma}[theorem]{Lemma}
\newtheorem{proposition}[theorem]{Proposition}
\theoremstyle{thmlemcorr*}
\newtheorem{theorem*}{Theorem}
\newtheorem{lemma*}[theorem]{Lemma}
\newtheorem{corollary*}[theorem]{Corollary}
\newtheorem{proposition*}[theorem]{Proposition}
\newtheorem{problem*}[theorem]{Problem}
\newtheorem{conjecture*}[theorem]{Conjecture}
\theoremstyle{defi}
\theoremstyle{remexample}
\newtheorem{remark}[theorem]{Remark}
\newtheorem{example}[theorem]{Example}
\theoremstyle{ass}
\newcommand{\Acal}{\mathcal{A}}
\newcommand{\Bcal}{\mathcal{B}}
\newcommand{\Mcal}{\mathcal{M}}
\newcommand{\Scal}{\mathcal{S}}
\newcommand{\Tcal}{\mathcal{T}}
\DeclareMathOperator{\id}{id}
\DeclareMathOperator{\dist}{dist}
\newcommand{\floor}[1]{\lfloor #1 \rfloor}
\newcommand{\norm}[1]{\|#1\|}
\newcommand{\dd}{\;\mathrm{d}}
\newcommand{\R}{\mathbb{R}}
\newcommand{\Z}{\mathbb{Z}}
\newcommand{\weakly}{\rightharpoonup}
\newcommand{\weaklystar}{\overset{*}\rightharpoonup}
\newcommand{\eps}{\epsilon}
\newcommand{\ffi}{\varphi}
\DeclareMathOperator{\SO}{SO}
\DeclareMathOperator*{\essup}{ess\,sup}
\DeclareMathOperator{\Id}{Id}
\newcommand{\eYrig}{Y^{\mathrm{rig}}_\eps}
\newcommand{\Yrig}{Y^{\mathrm{rig}}}
\def\Xint#1{\mathchoice 
{\XXint\displaystyle\textstyle{#1}}%
{\XXint\textstyle\scriptstyle{#1}}%
{\XXint\scriptstyle\scriptscriptstyle{#1}}%
{\XXint\scriptscriptstyle\scriptscriptstyle{#1}}%
\!\int} 
\def\XXint#1#2#3{{\setbox0=\hbox{$#1{#2#3}{\int}$} 
\vcenter{\hbox{$#2#3$}}\kern-.5\wd0}} 
\def\dashint{\,\Xint-}
\newcommand\restrict[1]{\raisebox{-.5ex}{$|$}_{#1}}
\newcommand{\hori}[2]{E_{#1}^{{#2},\to}}
\newcommand{\verti}[2]{E_{#1}^{{#2},\uparrow}}
\newcommand{\ui}[1]{^{(#1)}}
\title[Asymptotic analysis of fiber-reinforced materials]{Asymptotic analysis of deformation behavior in high-contrast fiber-reinforced materials: \\ Rigidity and anisotropy}
\author{Dominik Engl}
\address{Mathematisch Instituut, Universiteit Utrecht, Postbus 80010, 3508 TA Utrecht, The Netherlands}
\email{d.m.engl@uu.nl}
\author{Carolin Kreisbeck}
\address{Mathematisch-Geographische Fakult\"at, Katholische Universit\"at Eichst\"att-Ingolstadt, Osten\-stra{\ss}e 28, 85072 Eichst\"att}
\email{carolin.kreisbeck@ku.de}
\author{Antonella Ritorto}
\address{Mathematisch-Geographische Fakult\"at, Katholische Universit\"at Eichst\"att-Ingolstadt, Osten\-stra{\ss}e 28, 85072 Eichst\"att}
\email{antonella.ritorto@ku.de}
\begin{document}

 
\maketitle

\begin{abstract}  
	\vspace{-12pt} 
	 We identify the restricted class of attainable effective deformations in a model of reinforced composites with parallel, long, and fully rigid fibers embedded in an elastic body. In mathematical terms, we characterize the weak limits of sequences of Sobolev maps whose gradients on the fibers lie in the set of rotations. These limits are determined by an anisotropic constraint in the sense that they locally preserve length in the fiber direction. Our proof of the necessity emerges as a natural generalization and modification of the recently established asymptotic rigidity analysis for composites with layered reinforcements. However, the construction of approximating sequences is more  delicate here due to the higher flexibility and connectedness of the soft material component. We overcome these technical challenges by a careful approximation of the identity that is constant on the rigid components, combined with a lifting in fiber bundles for Sobolev functions. The results are illustrated with several examples of attainable effective deformations. If an additional second-order regularization is introduced into the material model, only rigid body motions can occur macroscopically. 

	\vspace{8pt}
	
	\noindent\textsc{MSC (2020):} 35B40 (primary); 74E30, 74Q20, 70G75  
	 
	\noindent\textsc{Keywords:} asymptotic analysis, rigidity, homogenization, fiber structures, composite materials. 
	 
	\noindent\textsc{Date:} \today.
\end{abstract}
	

\section{Introduction}\label{sec:introduction} 

Designing new composite materials with advanced mechanical features is an important agenda in the engineering sciences, with relevance for many branches of industry. The properties of a composite are tightly related to the characteristics and structure of its microscopic heterogeneities,  and it is well-known that the deformation behavior on a macroscopic scale may differ substantially from the way its components deform individually~\cite{Jon98,Mil02,VaM13}. 
A significant class of such materials with particular relevance for manufacturing lightweight structures are reinforced high-contrast composites; 
indeed, the combination of an elastically softer matrix medium with embedded stiff components of different shapes, such as~fibers, gives rise to a light, yet strong material. 

Motivated by these applications, an extensive body of mathematical literature on the homogenization of stiff fibered structures has emerged, providing various modeling approaches and techniques to pave the way for a reliable prediction of effective material response in reaction to external forces.
It is important to notice that scaling between the fiber thickness and the elastic properties plays a crucial role for the resulting homogenized model. 
We highlight here a few selected works. 
 In~\cite{BrE01, PiS97} and \cite{BrE07, Jar13}, the authors study
variational homogenization via $\Gamma$-convergence of Saint-Venant Kirchhoff energy functionals for vanishingly small fibers with suitable adhesion conditions and diverging Lam{\'e} coefficients in the elastically linear and nonlinear setting, respectively. 
As a consequence of the choice of scaling relations between the elastic constants, the fiber thickness and adhesive parameters in the papers~\cite{Jar13, PiS97}, the derived limit models describe second-gradient materials. 
Moreover, nonlocal effects have been observed to arise in models of homogenized fiber-reinforced structures, see e.g.,~\cite{BeB98} or \cite{BeG05, PaS16, Sil05} in the context of linear elasticity, as well as~\cite{Bel09}, where additional torsion effects are taken into account.

In this paper, we study a phenomenological model for composites reinforced by parallel long fibers, which are assumed to be fully rigid;
further modeling hypotheses are that the matrix material adheres to the fibers along all interfaces and that the relative volume percentage of the rigid components stays within fixed scale-invariant bounds. 
As we will see, the presence of rigid fibers gives rise to global restrictions of the material response, which sets this work apart from the aforementioned references dealing with stiff reinforcements.
Our goal here is to contribute to a qualitative understanding of the nonlinear model introduced in detail below by identifying, via a rigorous limit analysis, the class of anisotropic deformations that can be attained on a macroscopic scale.
\medskip

We begin the description of the model with the basic geometric set-up of the elastic body and the embedded fibers. 
Henceforth, let $\Omega=\omega\times (0,L)\subset \R^3$ be the reference configuration of a cylindrical body with height $L>0$ and cross section $\omega\subset \R^2$, where $\omega$ is a bounded Lipschitz domain. 
Whenever indicated, we assume additionally that $\omega$ satisfies the following hypothesis:
\begin{itemize}
\item[(H)] The domain $\omega$ is bi-Lipschitz homeomorphic to the open unit disk in $\R^2$, i.e., there exists a Lipschitz map $\phi : B(0,1)\to \omega$ whose inverse exists and is also Lipschitz.
\end{itemize}
Examples of sets with the property (H) include in particular rectangles (see e.g.~\cite{GHKR08}) 
or simply connected bounded domains with smooth boundary (see e.g.~\cite[Chapter~5.4]{Tay11}). 

To model the distribution of the fibers inside the body, consider a periodic lattice on $\R^2$ with unit cell 
$Y=[0,1)^2$ and a small length scale parameter $\eps>0$. We suppose that each scaled and translated cell $\eps(k+Y)$ with $k\in \Z^2$ contains the cross section of one fiber, described by a domain $\omega_\eps^k\subset \R^2$, which is assumed to satisfy two technical conditions. 
First, the fiber cross-sections are required to have (relative to their size) a fixed minimal distance to the boundary of their surrounding cell, precisely,
\begin{equation}\label{unif-compact-condition}
	\omega_\eps^k \subset \eps(k+ [\alpha,1-\alpha)^2)
\end{equation}
for a given $\alpha\in (0,\tfrac{1}{2})$.
As a second hypothesis, let each $\omega_\eps^k$ contain a square of side length $\eps\delta$ with fixed $\delta>0$ such that $\delta +2\alpha<1$, i.e., there is $a_\eps^k\in \eps(k+Y)$ such that
\begin{equation}\label{square-inside}
	S_{\eps}^k:= a_\eps^k + \eps(-\tfrac{\delta}{2},\tfrac{\delta}{2})^2   \subset  \omega_{\eps}^k;
\end{equation}
this guarantees that the measure of the fiber cross-sections scales like $\eps^{2}$, in particular, 
$|\omega_\eps^k|\geq \delta^2 \eps^2$. The two assumptions~\eqref{unif-compact-condition} and~\eqref{square-inside} are illustrated in Figure \ref{fig:setup}\,a).

\begin{figure}[h!]
		\centering
		\begin{subfigure}{.495\linewidth}
			\centering
			\begin{tikzpicture}
				\draw (0,0) rectangle (5,5);
				\draw (0.7,0.7) rectangle (4.3,4.3);
				\draw (4.2,5) node [anchor=south] {$\eps(k+Y)$};
				
				\draw [fill=gray!30!white] (1.5,1) [out=0,in=180] to (2.5,1.5) [out=0,in=225] to (3.8,1.3) [out=45,in=-90] to (3.8,2.6) [out=90,in=-90] to (4,3.7) [out=90,in=0] to (2.6,3.8) [out=180,in=0] to (1.5,4) [out=180,in=60] to (1,2) [out=240,in=180] to (1.5,1);
				\draw (1.5,1) node [anchor=south] {$\omega_\eps^k$};
				
				\draw [<->] (0,3.5) --++ (0.7,0);
				\draw [<->] (1.5,4.3) --++ (0,0.7);
				\draw (0.35,3.5) node [anchor=north] {$\eps\alpha$};
				\draw (1.5,4.65) node [anchor=west] {$\eps\alpha$};
				

				\draw (2.5,2.5) -- (3.5,2.5) -- (3.5,3.5) -- (2.5,3.5) -- cycle;
				\draw [<->] (2.4,2.5) -- (2.4,3.5);
				\draw (3,3) [fill=black] circle (1pt);
				\draw (3.1,3.25) node {$a_\eps^k$};
				\draw (2.4,3) node [anchor=east] {$\eps\delta$};
				\draw (3,2.5) node [anchor=north] {$S_\eps^k$};
			\end{tikzpicture}
			\put (-170,140) {a)}
		\end{subfigure}
		\begin{subfigure}{.495\linewidth}
			\centering
			\includegraphics[height=5.3cm]{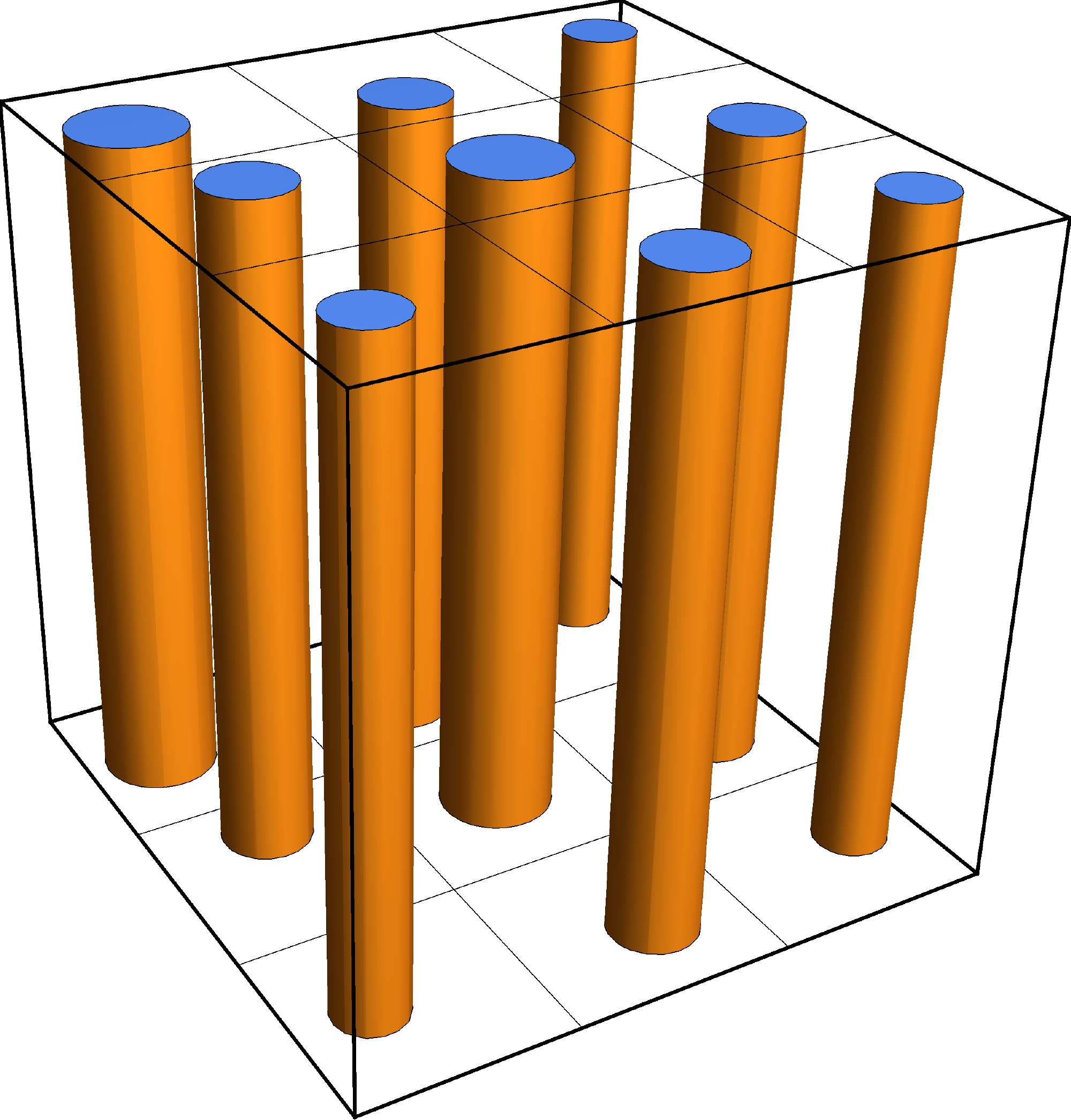}
			\put (-170,140) {b)}
			\put (-110,150) {$\Omega$}
			\put (-55,8) {$x_2$}
			\put (-133,25) {$x_1$}
			\put (-155,90) {$x_3$}
			\put (-25,80) {$\eYrig$}
		\end{subfigure}
		\caption{Illustration of: a) the scaled and translated unit cell $\eps(k+Y)$, which contains the fiber cross-section $\omega_\eps^k$ (shaded in grey) and the square $S_\eps^k$ in compliance with the minimal distance $\eps\alpha$ from the boundary; b) a collection of fibers embedded into the cuboid $\Omega$. }\label{fig:setup}
	\end{figure}

Considering the definition of the fiber cross-sections, we now introduce the set
\begin{align}\label{Yrig_eps}
	\eYrig = \bigcup_{k\in \Z^2} \omega^k_{\eps} \times \R;
\end{align}
then, $\eYrig\cap \Omega$ is the collection of all fibers in $\Omega$, and $\eYrig\cap \Omega$ corresponds to the matrix material, see Figure~\ref{fig:setup}\,b). 
Notice that in this set-up, the fiber cross-sections need not be periodically distributed.  
The analysis of a model where the fibers are distributed randomly without the confinement of the periodic lattice is an interesting problem, but  beyond the scope of this work. 

 With the geometric set-up in place, we now describe the possible deformations that a body with fibers $\Yrig_\eps\cap \Omega$ can undergo in response to external forces via Sobolev maps $u_\eps:\Omega\to \R^3$.
The rigidity of the fibers, which prevents any form of non-trivial elastic deformation, is reflected in the requirement that the deformation gradient $\nabla u_\eps$ restricted to each fiber is a local orientation-preserving isometry, or equivalently, by well-known rigidity results (cf.~e.g.~\cite{Res67}), a global rotation. Switching to the macroscopic point of view, one obtains the class of attainable effective deformations exactly as the weak limits of sequences $(u_\eps)_\eps$ when the scaling parameter $\eps$ tends to $0$. 
One may think of $(u_\eps)_\eps$ as a sequence of uniformly bounded energy for functionals of the form
\begin{align}\label{functional}
u\mapsto \int_{\Omega\setminus \eYrig} W_{\rm soft}(\nabla u) \dd{x} + \int_{\eYrig\cap \Omega}W_{\rm rig}(\nabla u)\dd{x},
\end{align}
where the elastic energy density $W_{\rm soft}:\R^{3\times 3}\to [0, \infty)$ satisfies standard properties including frame-indifference, suitable growth and coercivity assumptions, and vanishes on the identity, like, for instance, Saint Venant-Kirchhoff type densities; moreover, $W_{\rm rig}:\R^{3\times 3}\to [0, \infty]$ is given by $W_{\rm rig}(F) = 0$ for $F\in \SO(3)$ and $W_{\rm rig}=\infty$ otherwise, and can be interpreted as an energy density with infinitely large elastic constants. \medskip

The main result of this paper, stated in Theorem~\ref{theo:characterization}, is a complete characterization of the weak limits of such sequences $(u_\eps)_\eps$. 
It shows that the latter exhibit a restrictive anisotropic material response in the sense that the strain in the direction of the fibers merely depends on the cross-section variables, shows higher regularity, and most importantly, has unit length.  
Geometrically, this means that any vertical line in the reference configuration, which may be viewed as an infinitesimally thin fiber, can only be rotated and shifted. Several examples of macroscopically observable deformations are illustrated in Section~\ref{sec:examples}.

\begin{theorem}[Characterization of limit deformations]\label{theo:characterization}
Let $p>2$ and $\omega\subset \R^2$ be 
 a bounded Lipschitz domain that satisfies the hypothesis (H). With
	\begin{align*} 
		\Acal_\eps:= \{u\in W^{1,p}(\Omega;\R^3) : \nabla u \in \SO(3) \text{ a.e.~in } \eYrig\cap \Omega\}
	\end{align*}
	for $\eps>0$, the set of weak limits 
	\begin{align}\label{A_0}
		\Acal_0:=\{u \in W^{1,p}(\Omega;\R^3) : \text{there is $(u_\eps)_\eps$ with $u_\eps\in \Acal_\eps$ and $u_\eps \weakly u$ in $W^{1,p}(\Omega;\R^3)$}\}
	\end{align}
admits the three equivalent characterizations
	\begin{align*}
	\begin{split}
		\Acal_0 &= \{u\in W^{1,p}(\Omega;\R^3) : \partial_3 u \in W^{1,p}(\omega;\Scal^2)\}\\
		&=\{u\in W^{1,p}(\Omega;\R^3) : u(x) =x_3 \Sigma(x')+d(x')\text{ for a.e.~$x=(x', x_3)\in\Omega$}\\ & \hspace{3.7cm} \text{with $\Sigma\in W^{1,p}(\omega;\Scal^2), d\in W^{1,p}(\omega;\R^3)$}\}\\
		&=\{u\in W^{1,p}(\Omega;\R^3) : u(x) = R(x')x+b(x') \text{ for a.e.~$x=(x', x_3)\in\Omega$} \\ & \hspace{3.7cm}\text{with $R\in W^{1,p}(\omega;\SO(3)), b\in W^{1,p}(\omega;\R^3)$}\}.
		\end{split}
	\end{align*}
\end{theorem}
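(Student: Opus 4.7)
I would prove the theorem in three stages: establish the equivalence of the three right-hand descriptions, then the two inclusions $\Acal_0 \subseteq \{\cdot\}$ (necessity) and $\{\cdot\} \subseteq \Acal_0$ (sufficiency), working throughout with the third, most structured description. The first two descriptions coincide by integration in $x_3$: given $u \in W^{1,p}(\Omega;\R^3)$ with $\partial_3 u = \Sigma \in W^{1,p}(\omega;\Scal^2)$, write $u(x) = x_3 \Sigma(x') + d(x')$ and deduce $d \in W^{1,p}(\omega;\R^3)$ from $\nabla d = \nabla_{x'} u - x_3 \nabla \Sigma$. The second-third equivalence reduces to a Sobolev lifting of $\Sigma$ to $R \in W^{1,p}(\omega;\SO(3))$ with $R e_3 = \Sigma$, and this is where hypothesis (H) becomes essential: since $\omega$ is bi-Lipschitz to the unit disk, hence contractible, the principal $\SO(2)$-bundle $\SO(3) \to \Scal^2$, $R \mapsto R e_3$, admits global trivializations along continuous maps; with $p > 2$ giving the embedding $W^{1,p}(\omega) \hookrightarrow C^0(\bar\omega)$, a standard Sobolev lifting through this fiber bundle produces $R$, and $b(x') := d(x') - R(x')(x',0)^\top$ closes the third form.

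\textbf{Necessity.} Consider $u_\eps \in \Acal_\eps$ with $u_\eps \weakly u$ in $W^{1,p}$. On each connected rigid fiber $F_\eps^k := \omega_\eps^k \times (0,L)$, the classical Reshetnyak/Liouville rigidity theorem yields $u_\eps|_{F_\eps^k}(x) = R_\eps^k x + b_\eps^k$ with $R_\eps^k \in \SO(3)$ and $b_\eps^k \in \R^3$. The central step is a discrete $W^{1,p}$-type bound on $(R_\eps^k, b_\eps^k)$: using the inner squares from \eqref{square-inside} to provide a common transverse extent between adjacent rigid regions and the matrix buffer of width $\sim \eps\alpha$ from \eqref{unif-compact-condition}, the fundamental theorem of calculus across the matrix strip together with Jensen's inequality yields
\[
\sum_k \eps^{2-p}\bigl(|R_\eps^{k+e_i} - R_\eps^k|^p + |b_\eps^{k+e_i} - b_\eps^k|^p\bigr) \leq C \|\nabla u_\eps\|_{L^p(\Omega)}^p \leq C, \qquad i \in \{1,2\}.
\]
A standard piecewise-affine interpolation then delivers $R_\eps \to R$ in $L^p$ with $R \in W^{1,p}(\omega;\SO(3))$ (the limit stays in $\SO(3)$ by closedness combined with the uniform convergence afforded by $W^{1,p} \hookrightarrow C^0$ for $p > 2$), and $b_\eps \to b \in W^{1,p}(\omega;\R^3)$. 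Since on every fiber $u_\eps$ converges uniformly to $R(x')x + b(x')$, $u_\eps \to u$ strongly in $L^p$ by Rellich--Kondrachov, and the candidate limit is continuous, one concludes $u(x) = R(x')x + b(x')$ on all of $\Omega$.

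\textbf{Sufficiency.} Given $u(x) = R(x')x + b(x')$, I construct $u_\eps \in \Acal_\eps$ with $u_\eps \weakly u$. Smooth approximation of $R$ in $W^{1,p}(\omega;\SO(3))$ (via mollification of $A \in W^{1,p}(\omega;\mathfrak{so}(3))$ with $R = \exp(A)$, available thanks to the lifting) together with a diagonal argument reduce the task to smooth $R, b$. The core ingredient is then a \emph{fiber-adapted approximation of the identity} $\Phi_\eps : \omega \to \omega$ with
\begin{equation*}
\Phi_\eps \equiv a_\eps^k \text{ on } \omega_\eps^k, \qquad \Phi_\eps \to \id \text{ in } L^\infty, \qquad \|\nabla \Phi_\eps\|_{L^\infty} \leq C, \qquad \nabla \Phi_\eps \weakly \Id \text{ in } L^p(\omega;\R^{2\times 2}).
\end{equation*}
Cell by cell, one exploits the buffer of width $\sim \eps\alpha$ from \eqref{unif-compact-condition} to interpolate from the constant value $a_\eps^k$ on $\omega_\eps^k$ to a piecewise-linear ramp whose cell-averaged gradient is the identity. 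Setting
\[
u_\eps(x) := R\bigl(\Phi_\eps(x')\bigr)\, x + b\bigl(\Phi_\eps(x')\bigr),
\]
one verifies $\nabla u_\eps|_{F_\eps^k} = R(a_\eps^k) \in \SO(3)$, so $u_\eps \in \Acal_\eps$; the weak convergence $u_\eps \weakly u$ in $W^{1,p}$ follows by the chain rule, the strong $L^\infty$-convergence of $R \circ \Phi_\eps$ and $b \circ \Phi_\eps$, and the weak $L^p$-convergence $\nabla \Phi_\eps \weakly \Id$.

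\textbf{Main obstacle.} I expect the sufficiency direction to be the principal challenge: the approximation $\Phi_\eps$ must stay constant on a positive-fraction rigid set while its gradient weakly approaches the identity, a balance achievable only through a finely tuned compensating behavior on the matrix strip of thickness $\sim \eps\alpha$. Unlike the layered case previously treated, the three-dimensional connectedness of the matrix forces the rigid motions on all neighboring fibers to be glued in a single consistent step, a global compatibility that depends crucially on hypothesis (H). The companion Sobolev lifting $\Scal^2 \to \SO(3)$ is a further nontrivial technical ingredient, relying on the triviality of the $\SO(2)$-principal bundle granted by the contractibility of $\omega$.
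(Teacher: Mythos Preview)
Your overall architecture matches the paper's: the three right-hand descriptions are reconciled via a Sobolev lifting through the bundle $\SO(3)\to\Scal^2$ (this is precisely where (H) enters, as you say), necessity is obtained from fiber-wise Reshetnyak rigidity together with a discrete compactness estimate, and sufficiency is built on a fiber-adapted approximation of the identity $\Phi_\eps$. However, there is a genuine gap in each direction.

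\textbf{Necessity.} Your claimed discrete $W^{1,p}$ bound on the \emph{full} rotations,
\[
\sum_k \eps^{2-p}\,|R_\eps^{k+e_i}-R_\eps^k|^p \leq C\|\nabla u_\eps\|_{L^p(\Omega)}^p,
\]
is false in general. Integrating $\partial_d u_\eps$ across a soft strip of dimensions $L_1\times L_2\times L_3\sim \eps\times\eps\times L$ and applying Jensen, the trace difference is of the form $(R_2-R_1)(y_2 e_2+y_3 e_3)+c$; only $y_3$ ranges over a \emph{macroscopic} interval, so the minimization over $c$ gives the scaling $\eps^{2-p}|(R_2-R_1)e_3|^p$, whereas the $e_2$-column carries merely the weight $\eps^2$ and the $e_1$-column is not seen at all. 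In fact one can manufacture a counterexample: take $R_\eps^k$ to be rotations around $e_3$ by angles $\theta_\eps^k=(-1)^{k_1+k_2}\theta_0$, set $u_\eps^{(3)}(x)=x_3$, and in the cross section put $u_\eps'(x')=R'(\theta_\eps(x'))(x'-a_\eps(x'))+a_\eps(x')$ with Lipschitz interpolants $\theta_\eps,a_\eps$; since $|x'-a_\eps(x')|\leq C\eps$ and $|\nabla\theta_\eps|\leq C/\eps$, the gradients stay bounded while $|R_\eps^{k+e_i}-R_\eps^k|\sim 1$ on every pair, making your sum diverge like $\eps^{-p}$. The paper therefore only tracks $\Sigma_\eps:=R_\eps^k e_3$, obtains $\Sigma\in W^{1,p}(\omega;\Scal^2)$ via Fr\'echet--Kolmogorov and a difference-quotient estimate, and lands in the \emph{second} description; the third is recovered afterwards through the lifting lemma, not from compactness.

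\textbf{Sufficiency.} Your reduction ``approximate $R,b$ by smooth $R_\eta,b_\eta$, then diagonalize'' does not by itself yield weak $W^{1,p}$-convergence: for smooth data the construction $u_{\eta,\eps}(x)=R_\eta(\Phi_\eps(x'))x+b_\eta(\Phi_\eps(x'))$ gives $\|\nabla u_{\eta,\eps}\|_{L^p}\leq C(\|R_\eta\|_{W^{1,\infty}}+\|b_\eta\|_{W^{1,\infty}})$, and this bound blows up as $\eta\to 0$ unless $R,b$ are themselves Lipschitz. Because $\Phi_\eps$ is not injective (constant on a set of positive volume fraction), $\int_\omega|\nabla R_\eta(\Phi_\eps(x'))|^p\,dx'$ is not dominated by $\|\nabla R_\eta\|_{L^p}^p$ via change of variables. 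The paper circumvents this by introducing a small translation $a\in B(0,\eps)$, averaging the $L^p$-gradient bound over $a$ (a Fubini argument after Conti--Dolzmann), and then selecting a good $a_\eps$ for which the bound is uniform in both $\eta$ and $\eps$; only then does Attouch-type diagonalization (performed at the $L^p$ level) close the argument.
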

There are different facets to the general discussion of this result and its placement in the related literature we wish to mention: 
\begin{itemize}
\item[(i)] Theorem~\ref{theo:characterization} is a natural extension of the findings in the paper~\cite{ChK17} by Christowiak \& Kreisbeck from two to three dimensions. Considering that it is not possible to distinguish between layers and fibers in $2$d, 
also the reference~\cite{ChK20}, where the same authors study layered reinforcements, provides another natural three-dimensional extension of \cite{ChK17}. Whereas the codimension of the layer reinforcements in \cite{ChK20} is one, the codimension of the rigid components for our model with fibers is two, making the latter clearly more flexible. For more details, see Remark~\ref{rem:comparison}. 
\medskip

\item[(ii)] Generally speaking, the concept of asymptotic rigidity, as introduced in~\cite{ChK17, ChK20}, refers to global geometric constraints that emerge in the limit of functions that are (almost) local isometries on suitably arranged, and increasingly refined, disconnected parts of their domain, cf.~also~\cite{DFK19}. Analogues in the case that a domain consists of only one connected component are the well-known classical rigidity statements by Liouville for smooth and Reshetnyak~\cite{Res67} for Sobolev functions. In that spirit, Theorem~\ref{theo:characterization} represents asymptotic rigidity for fiber structures.
\medskip

\item[(iii)]  We point out that our main theorem provides a basis for future efforts regarding the homogenization via $\Gamma$-convergence of variational models for 
elastic materials reinforced with rigid long fibers. If one aims for a statement in analogy to \cite{ChK20}, where reinforcements in the form of rigid layers are studied, then Theorem~\ref{theo:characterization} implies that the $\Gamma$-limit for $\eps\to 0$ of energy functionals as in~\eqref{functional}, which are constrained   by the non-convex sets $\Acal_\eps$, are finite exactly on the limit set $\Acal_0$. In other words, the domain of the $\Gamma$-limit can be identified as a direct consequence. 
Notice, however, that the approximation we obtain from Theorem~\ref{theo:characterization} for elements in $\Acal_0$
will not be a recovery sequence in general. 
\medskip

\item[(iv)] Within a broader theoretical context of asymptotic analysis, one may interpret the previous theorem
as a $\Gamma$-convergence result for characteristic functions (in the sense of convex analysis) associated with $\Acal_\eps$, or equivalently, as the characterization of the Kuratowski limit of the sequence of sets $(\Acal_\eps)_\eps$, both with respect to weak convergence in $W^{1,p}(\Omega;\R^3)$; for more on these concepts, see~e.g.~\cite{Bra02, Dal93}.
\end{itemize}
 
The proof of Theorem~\ref{theo:characterization} falls naturally into two parts, the necessity and sufficiency, which we approach with different techniques. 
To see the necessity, it is possible to generalize and adapt the arguments in~\cite{ChK17, ChK20}, which again rely on a compactness result for sequences of piecewise constant rotations similar to~\cite[Theorem~4.1]{FJM02}. The required new ingredient is a suitable estimate for rotations on neighboring fibers, see Lemma~\ref{lem:approx_SO(3)}. 
The sufficiency, on the other hand, calls for an explicit construction of approximating sequences in $\Acal_\eps$.
Since the soft matrix components have an additional degree of freedom compared to the layer case and are connected (cf.\,(i) above), this construction is more involved. After evoking a lifting in fiber bundles for Sobolev functions, we consider a composition with a careful approximation of the identity that is constant on the rigid components (see Lemma~\ref{lem:approx-id}). 

The effect of higher-order regularizations in material models has been a subject of intense study, and especially, weaker penalizations that do not involve the full Hessian of the deformations (in the second-order case), have come into the focus more recently~\cite{BKS18, DFK19, DaF20}.  
In this spirit, we complement our model with a anisotropic partial regularization, precisely, a uniform bound on the second derivatives in the cross-section variables. The next theorem shows that this is already enough to deprive the macroscopic material response of any flexibility, that is, only rigid body motions can occur. 

\begin{theorem}[Rigid macroscopic behavior through partial regularization]\label{theo:rigidity_reg} 
Let $p>1$ 
and $u\in W^{1, p}(\Omega;\R^3)$. Suppose there exists a sequence $(u_\eps)_\eps\subset W^{1, p}(\Omega;\R^3)$ such that $u_\eps \in\Acal_\eps$ for all $\eps$ and 
	\begin{align}\label{second_derivatives}
	 \sup_{\eps} \max_{i, j\in \{1,2\}}\norm{\partial_i\partial_j u_\eps}_{L^p(\Omega;\R^3)}^p  <\infty,
	\end{align}
and $u_\eps \weakly u$ in $W^{1,p}(\Omega;\R^3)$ as $\eps\to 0$. 
Then, $u$ is a rigid body motion, i.e.,
	\begin{align*}
		u(x) = Rx + b, \quad x\in \Omega, 
	\end{align*}
with a rotation $R\in\SO(3)$ and a translation vector $b\in\R^3$. 	
\end{theorem}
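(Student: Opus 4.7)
The starting point is Reshetnyak's rigidity theorem \cite{Res67} applied on each fiber: since $\omega_\eps^k \times (0, L)$ is connected and open, $\nabla u_\eps \in \SO(3)$ a.e.\ there forces $u_\eps(x) = R_\eps^k x + c_\eps^k$ with $R_\eps^k \in \SO(3)$, $c_\eps^k \in \R^3$. In particular the partial derivatives $\partial_j u_\eps = R_\eps^k e_j$, $j = 1, 2, 3$, are constant on each fiber (independent of $x_3$) and of unit length.

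Next I would use Aubin--Lions compactness to upgrade the tangential part of the weak convergence to a strong one. The bound \eqref{second_derivatives} places $(u_\eps)$ in a bounded subset of $L^p((0,L); W^{2,p}(\omega;\R^3))$, while $(\partial_3 u_\eps)$ is bounded in $L^p((0,L); L^p(\omega;\R^3))$. With the compact embedding $W^{2,p}(\omega) \cembed W^{1,p}(\omega) \embed L^p(\omega)$, Aubin--Lions yields $u_\eps \to u$ strongly in $L^p((0, L); W^{1, p}(\omega; \R^3))$; equivalently, $\partial_j u_\eps \tolong \partial_j u$ in $L^p(\Omega; \R^3)$ for $j = 1, 2$, and, up to a subsequence, pointwise a.e.\ on $\Omega$. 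Combining this with the fiber rigidity and the area bound $|\omega_\eps^k| \geq \delta^2 \eps^2$ from \eqref{square-inside}, the Lebesgue density theorem ensures that for a.e.\ $x' \in \omega$ one has $x' \in \omega_\eps^{k(\eps, x')}$ along a subsequence of $\eps$; consequently $\partial_j u(x', x_3) = \lim_\eps R_\eps^{k(\eps,x')} e_j =: r_j(x')$ is independent of $x_3$, with $|r_j| = 1$ and $r_1 \cdot r_2 = 0$ a.e.

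From the $x_3$-independence of $\partial_j u$ for $j = 1, 2$, the commutation of mixed distributional partials gives $\partial_3(\partial_j u) = \partial_j(\partial_3 u) = 0$, so $\partial_3 u = g(x_3)$ depends only on $x_3$. The decisive step is to identify $g$ as constant. Compactness of $\SO(3)$, together with the convergence of the first two columns, forces $R_\eps^{k(\eps, y)} e_3 \to r_1(y) \times r_2(y)$ for a.e.\ $y$; passing to the limit in the fiber identity
$$u_\eps(y, x_3) - u_\eps(y, x_3^0) = (x_3 - x_3^0)\, R_\eps^{k(\eps, y)} e_3, \qquad y \in \omega_\eps^{k(\eps, y)},$$
along a good subsequence (using strong $L^p$-convergence of $u_\eps$ and Lebesgue-point arguments in $x_3$) yields
$$u(y, x_3) - u(y, x_3^0) = (x_3 - x_3^0)\bigl(r_1(y) \times r_2(y)\bigr).$$
Because the left-hand side equals $\int_{x_3^0}^{x_3} g$ and is therefore independent of $y$, the cross product $r_1(y) \times r_2(y) \equiv v \in \Sbb^2$ must be a constant vector, and hence $\partial_3 u \equiv v$. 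Together with the fiberwise orthogonality $r_j \cdot v = 0$, this shows $\nabla u \in \SO(3)$ a.e.\ on the connected open set $\Omega$, and a final application of Reshetnyak's rigidity theorem produces $u(x) = R x + b$ with $R \in \SO(3)$, $b \in \R^3$.

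The main analytic obstacle is the Aubin--Lions step: the hypothesis \eqref{second_derivatives} provides no second-order control in the fiber direction, and one must exploit the interplay between the cross-sectional regularity it supplies and the trivial $L^p$-bound on $\partial_3 u_\eps$. The subtlest conceptual point is the identification of the third column $\partial_3 u$, where a pointwise limit of $R_\eps^k e_3$ obtained only through the compactness of $\SO(3)$ has to be reconciled with the global weak limit of $\partial_3 u_\eps$ via the $x_3$-integrated fiber identity.
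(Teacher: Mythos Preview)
Your Aubin--Lions step is a genuinely different route from the paper's and is correct: it delivers strong $L^p(\Omega)$-convergence of $\partial_1 u_\eps,\partial_2 u_\eps$ directly from the cross-sectional second-order bound. The paper instead first invokes Proposition~\ref{prop:rigidity_general} to write $u=x_3\Sigma(x')+d(x')$, then proves $\Sigma$ is constant by sharpening Lemma~\ref{lem:neighboring_rotations} with an additional Poincar\'e inequality (Lemma~\ref{lem:neighboring_rotations_reg}) so as to gain an extra factor $\eps^p$ in~\eqref{estimate_frechet}, and only afterwards manufactures a strongly convergent matrix field $V_\eps$ via $x_3$-averaging and the compact embedding $W^{1,p}(U')\hookrightarrow L^p(U')$, to which Lemma~\ref{lem:vector_field} is applied. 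Your route bypasses Proposition~\ref{prop:rigidity_general} and Lemma~\ref{lem:neighboring_rotations_reg} entirely, which is economical.

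However, there is a real gap after Aubin--Lions. The assertion that ``the Lebesgue density theorem ensures that for a.e.\ $x'\in\omega$ one has $x'\in\omega_\eps^{k(\eps,x')}$ along a subsequence of $\eps$'' is neither a consequence of Lebesgue density nor true in general: the fiber cross-sections sit strictly inside $\eps(k+[\alpha,1-\alpha)^2)$, and nothing forces a \emph{fixed} point $x'$ to fall into one infinitely often along an arbitrary sequence $\eps_j\to 0$. The same unjustified pointwise claim underlies your passage to the limit in the fiber identity $u_\eps(y,x_3)-u_\eps(y,x_3^0)=(x_3-x_3^0)R_\eps^{k}e_3$.

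The repair is to argue in measure rather than pointwise, which is precisely the content of the paper's Lemma~\ref{lem:vector_field}: if $f_\eps\to f$ strongly in $L^q$ and $f_\eps=0$ a.e.\ on $E_\eps:=\bigcup_k\omega_\eps^k$, then $\int_{E_\eps}|f|^q\leq\|f-f_\eps\|_{L^q}^q\to 0$; combined with $\liminf_\eps|E_\eps\cap A|>0$ for every measurable $A$ of positive measure (take a small cube around a Lebesgue density point of $A$ and use $|\omega_\eps^k|\geq\delta^2\eps^2$ in each cell), this forces $f=0$ a.e. Applying this with $f_\eps=\partial_j u_\eps(\cdot,x_3)-\partial_j u_\eps(\cdot,x_3^0)$, with $f_\eps=|\partial_j u_\eps|-1$, and with $f_\eps=\partial_1 u_\eps\cdot\partial_2 u_\eps$ (adjusting the exponent) recovers the $x_3$-independence, unit length, and orthogonality of $r_1,r_2$ that you claim. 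The identification $\partial_3 u=r_1\times r_2$ then follows from the same device, using that $\partial_1 u_\eps\times\partial_2 u_\eps\to r_1\times r_2$ strongly in $L^{p/2}$ and equals $\partial_3 u_\eps$ on the fibers. Once patched this way your argument is complete and, for $p\geq 2$, arguably more streamlined than the paper's.
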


This work is organized as follows. After introducing the relevant notations, we collect in Section~\ref{sec:preliminaries-tools} a few technical tools for working with manifold-valued, particularly $\SO(3)$-valued, Sobolev functions, including a lifting, extension, and density result. 
Section~\ref{sec:necessary_proof} is concerned with identifying necessary structural properties of weak limits of sequences $(u_\eps)_\eps$ with $u_\eps\in \Acal_\eps$. Moreover, we formulate in Proposition \ref{prop:dist-constraint} a generalization of Theorem \ref{theo:characterization}, where the exact differential inclusion $\nabla u_\eps\in \SO(3)$ a.e.~in $\eYrig\cap \Omega$ is replaced by a suitable approximate variant~(cf.~\eqref{approximate_inclusion}); from a modeling point of view, the latter makes the transition from rigid to elastically deformable, yet, very stiff fibers. 
In Section \ref{sec:sufficient_proof}, we detail the construction of suitable approximating sequences, showing the sufficiency part of Theorem~\ref{theo:characterization}.
In combination with the necessity from Section~\ref{sec:necessary_proof} and a lifting argument, the proof of Theorem~\ref{theo:characterization} is then completed. 
The intention of Section \ref{sec:examples} is to illustrate the obtained analytical results from a perspective of materials engineering. To this end, we present several examples of attainable macroscopic deformations, comment on conditions for incompressibility, and make a brief comparison with the setting of layered composites. 
Finally, the paper concludes with the proof of Theorem~\ref{theo:rigidity_reg} in Section \ref{sec:regularization}.

\section{Preliminaries and tools}\label{sec:preliminaries-tools}
\subsection{Notation} 
The standard unit vectors in $\R^n$ are $e_i$ for $i=1, \ldots, n$.
For $a,b \in \R^3$, we denote their cross product as $a \times b$ and their scalar product as $a\cdot b$. 
The two-dimensional unit sphere $\Scal^2$ consists of all unit vectors in $\R^3$.
On the matrix space $\R^{m\times n}$, we work with the standard Frobenius norm given by $|A| = \sqrt{\mathrm{Tr}(A^TA)}$ for $A\in \R^{m\times n}$, where $\mathrm{Tr}$ is the trace operator and $A^T\in \R^{n\times m}$ denotes the transpose of $A$. 
By $\SO(n)$, we denote the special orthogonal group of matrices in $\R^{n\times n}$, and $\Id_{\R^{n\times n}}$ stands for the identity matrix in $\R^{n\times n}$, while $\id_{\R^n}$ denotes the identity map on $\R^n$. For $t\in \R$, $\lceil t \rceil$ and $\lfloor t \rfloor$ are 
the smallest integer not less and the largest integer not greater than $t$, respectively. 
 
We write $B(x,r)$ for an open ball with center $x\in\R^n$ and radius $r>0$. 
If $A, B\subset \R^n$, then $A\Subset B$ means that $A$ is compactly contained in $B$. Moreover, we refer to an open, connected, and non-empty set $U\subset \R^n$ as a domain. The Lebesgue measure is denoted by $|\cdot|$ and $\#(\cdot)$ symbolizes the counting measure. 

We use the splitting $\R^3=\R^{2}\times \R$ and write $x=(x',x_3)$ with $x'=(x_1, x_2)\in \R^2$.
The projection onto the first two coordinates of the set $A\subset\R^3$ is denoted by $A' \subset \R^2$. 
Further, if $u: \R^3\supset U \to \R^m$ is  (weakly) differentiable, we split its (weak) gradient into $\nabla u = (\nabla 'u| \partial_3 u)$ with $\nabla' u = (\partial_1 u| \partial_2 u)$. For $d\in\R^3$, let $\partial_d u = (\nabla u)d$ be the directional derivative of $u$ in the direction $d$. 
In particular, if $d=e_k$, $k\in\{1,2,3\}$, we write $\partial_k u$ instead of $\partial_{e_k}u$.  
In case that $U\subset \R^2$ and $u: U\to \R^m$ is (weakly) differentiable, then the (weak) gradient of $u$ is denoted by $\nabla' u$. 

For $U\subset \R^n$ open and $1\leq p\leq \infty$, we use the standard notation for Lebesgue and Sobolev spaces,  $L^p(U;\R^m)$ and $W^{1,p}(U;\R^m)$. 
Replacing $\R^m$ by an embedded submanifold $\Mcal$ of $\R^m$, we set
\begin{align*}
	W^{1,p}(U;\Mcal):= \{u\in W^{1,p}(U;\R^m) : u \in \Mcal \text{ a.e.~in }U\},
\end{align*}
and analogously for the Lebesgue spaces. 
Without further mention, elements in $W^{1,p}(\omega;\R^3)$ are identified with functions in $W^{1,p}(\Omega;\R^3)$ via constant extension in $x_3$-direction. 

Finally, we use generic constants $C>0$ that may differ from one line to the other. 
Families indexed with $\eps>0$ refer to any sequence $(\eps_j)_{j}$ such that $\eps_j \to 0$ as $j\to\infty$. 

\subsection{Tools for manifold-valued Sobolev functions}
In this subsection, we collect a few auxiliary results. All the statements hold in more generality, but we present them here in a way that is tailored for applying them in later sections.

First, we present a lifting result for Sobolev functions with values in $\Scal^2$, which builds on the theory of fiber bundles, see e.g.~\cite{Ehr51, Ste99, Swi02} for selected references on the topic.
For the reader's convenience, we give the following definition:
Let $E,B,F$ be differentiable manifolds and $\pi:E\to B$ a differentiable map. The tuple $(E,B,F,\pi)$ is called a fiber bundle, if there exists an open cover $(U_\tau)_\tau$ of $B$ and diffeomorphisms $\phi_\tau : U_\tau \times F\to \pi^{-1}(U_\tau)$ such that $\pi\circ\phi_\tau$ 
projects canonically onto the first coordinate.
One usually refers to $E$ as the total space, $B$ is the base space, $F$ the fiber, $U_\tau$ a trivial neighborhood, and $\phi_\tau$ a trivialization.

Here, we are particularly interested in the projection 
\begin{align}\label{SO(3)_projection}
	\pi: \SO(3) \to \Scal^2,\quad R\mapsto Re_3,
\end{align}
which maps each rotation to its third column. 
In light of~\cite[Example 7.20~a), Theorem~7.15]{Lee03}, the map $\pi$ is a smooth submersion, and hence the tuple $(\SO(3),\Scal^2,\SO(2),\pi)$ is a fiber bundle, as a consequence of Ehresmann's lemma \cite{Ehr51}. 

With this fact in mind, the following lemma is a simple modification of the lifting result for Sobolev functions defined on the open unit disk in~\cite[Proposition~5]{BeC07}; the latter, in turn, builds on similar arguments as~\cite[Proposition 4.10]{Swi02}.

\begin{lemma}[Lifting of \boldmath{$\Scal^2$}-valued Sobolev functions]\label{lem:lifting}
	Let $\omega\subset \R^2$ be a bounded Lipschitz domain that satisfies the hypothesis (H) and let $\Sigma\in W^{1,p}(\omega;\Scal^2)$ for $p>2$. 
	Then, there exists $R\in W^{1,p}(\omega;\SO(3))$ such that $Re_3 = \Sigma$ a.e.~in $\omega$.
\end{lemma}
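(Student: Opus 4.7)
\emph{Reduction to the disk.} Using the bi-Lipschitz homeomorphism $\phi:B(0,1)\to\omega$ from hypothesis (H), the function $\tilde\Sigma:=\Sigma\circ\phi$ lies in $W^{1,p}(B(0,1);\Scal^2)$ by the chain rule for bi-Lipschitz maps. Any lift $\tilde R\in W^{1,p}(B(0,1);\SO(3))$ of $\tilde\Sigma$ transports back to a lift $R:=\tilde R\circ\phi^{-1}\in W^{1,p}(\omega;\SO(3))$ of $\Sigma$, so it suffices to treat the case $\omega=B(0,1)$. Since $p>2$, Morrey's embedding yields $\tilde\Sigma\in C^{0,1-2/p}(\overline{B(0,1)};\Scal^2)$, so the image $K:=\tilde\Sigma(\overline{B(0,1)})$ is compact in $\Scal^2$.

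\emph{Local trivializations and local lifts.} The principal $\SO(2)$-bundle $\pi:\SO(3)\to \Scal^2$ from \eqref{SO(3)_projection} admits smooth local sections $\sigma_j:V_j\to \SO(3)$ with $\pi\circ\sigma_j=\id_{V_j}$ over each element of a finite open cover $(V_j)_{j=1}^N$ of $K$ by trivializing neighborhoods. By uniform continuity of $\tilde\Sigma$, one may decompose $\overline{B(0,1)}$ into finitely many closed polygonal pieces $K_1,\ldots,K_M$ with pairwise disjoint interiors such that, for each $k$, some index $j(k)$ satisfies $\tilde\Sigma(K_k)\subset V_{j(k)}$. Setting $R_k:=\sigma_{j(k)}\circ\tilde\Sigma$ on $K_k$, the chain rule gives $R_k\in W^{1,p}(\mathrm{int}(K_k);\SO(3))$ with $\pi\circ R_k=\tilde\Sigma$ on $K_k$, since each $\sigma_{j(k)}$ is smooth on the compact image.

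\emph{Gluing and main obstacle.} It remains to patch the local lifts $R_k$ into a single function $\tilde R\in W^{1,p}(B(0,1);\SO(3))$. Any two sections of $\pi$ over a common trivializing neighborhood differ by right multiplication with an $\SO(2)$-valued map, where $\SO(2)$ acts on $\SO(3)$ as the stabilizer of $e_3$. Proceeding inductively along a spanning tree of the adjacency graph of the pieces $K_k$, one replaces each successive $R_k$ by $R_k\,c_k$ for a suitable constant $c_k\in\SO(2)$ so that the adjusted lifts share traces on every common boundary arc. The main obstacle is the global consistency of this procedure: a priori, the $\SO(2)$-transitions could produce a nontrivial monodromy around closed loops, obstructing the existence of a global section. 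The simple connectedness of $B(0,1)$, guaranteed by hypothesis (H), is precisely what trivializes this obstruction and permits a globally consistent choice, yielding $\tilde R\in W^{1,p}(B(0,1);\SO(3))$ with $\pi\circ\tilde R=\tilde\Sigma$ a.e.
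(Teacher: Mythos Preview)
Your reduction to the unit disk via the bi-Lipschitz map $\phi$ is exactly what the paper does; the paper then simply invokes \cite[Proposition~5]{BeC07} for the lifting on $B(0,1)$, whereas you attempt to reprove that result from scratch. The overall strategy of your direct argument---local sections of the bundle $\pi:\SO(3)\to\Scal^2$, composition with $\tilde\Sigma$, and gluing exploiting simple connectedness---is the correct one in spirit, but the gluing step as written has a genuine gap.

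You assert that along a spanning tree one can replace $R_k$ by $R_k\,c_k$ with a \emph{constant} $c_k\in\SO(2)$ so that traces match on shared boundary arcs. This is false in general. On a common boundary arc $\Gamma\subset K_k\cap K_l$ the two local lifts satisfy
\[
R_k = R_l\cdot\bigl(g_{j(l)j(k)}\circ\tilde\Sigma\bigr),
\]
where $g_{j(l)j(k)}:V_{j(k)}\cap V_{j(l)}\to\SO(2)$ is the transition function between the chosen sections. Since $\tilde\Sigma$ is not constant on $\Gamma$, this mismatch is a \emph{non-constant} $\SO(2)$-valued function of $x\in\Gamma$, and no single constant right-multiplication can correct it. The actual gluing requires adjusting each $R_k$ by a suitable $\SO(2)$-valued \emph{function} $h_k\in W^{1,p}(K_k;\SO(2))$; producing such $h_k$ consistently amounts to trivializing a \v{C}ech $1$-cocycle with values in $\SO(2)\cong S^1$ (equivalently, after lifting to the universal cover $\R\to\SO(2)$, showing that an integer-valued obstruction vanishes). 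Simple connectedness of $B(0,1)$ does guarantee this, but the argument is more delicate than a spanning-tree walk with constant corrections---this is precisely the content of \cite[Proposition~5]{BeC07} (building on \cite[Proposition~4.10]{Swi02}) that the paper cites. You should either carry out that cocycle argument in full or appeal to the reference as the paper does.
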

\begin{proof}
Let $\phi: B(0,1) \to \omega$ be the bi-Lipschitz map according to assumption (H).
Then,
	\begin{align*}
		\widetilde\Sigma := \Sigma \circ \phi \in W^{1,p}(B(0,1);\Scal^2),
	\end{align*}
	see e.g.~\cite[Corollary~1, page~303]{GoR90}. 
	Now, we apply \cite[Proposition 5]{BeC07} to find a function $\widetilde R \in W^{1,p}(B(0,1);\SO(3))$ such that
	$\widetilde R e_3 = \widetilde \Sigma$.
	The composition $R := \widetilde R \circ \phi^{-1} \in W^{1,p}(\omega;\SO(3))$ is then the sought lift; indeed,
	\begin{align*}
		Re_3 = \pi \circ R = \pi \circ \widetilde R \circ \phi^{-1} = \widetilde \Sigma \circ \phi^{-1} = \Sigma \circ \phi \circ\phi^{-1} = \Sigma,
	\end{align*}
	with $\pi$ as in~\eqref{SO(3)_projection}. 
\end{proof}

\begin{remark}[Non-uniqueness of lifts]\label{rem:lifting_formula}
This remark shows that the liftings obtained in Lemma \ref{lem:lifting} are generally not unique.
	To give an explicit example, let $\Sigma\in W^{1,p}(\omega;\Scal^2)$ for $p>2$ and assume that there is $\eta>0$ such that the intersection of the image $\Sigma(\omega)$ with the cylinder $B(0, \eta)\times \R\subset\R^3$ is empty.
	Under this assumption, $\Sigma_1^2 + \Sigma_2^2 = 1-\Sigma_3^2 \geq \eta^2$. 
	
	Then, the functions $R,S\in W^{1,p}(\omega;\SO(3))$ defined by
	\begin{align*}
		Re_3 = \Sigma ,\quad Re_2 =\frac{1}{\sqrt{\Sigma_{1}^2 + \Sigma_{2}^2}}(-\Sigma_{2},\Sigma_{1},0),\quad Re_1=Re_2\times Re_3,
	\end{align*}
	and
	\begin{align*}
		Se_3 = \Sigma ,\quad Se_2 =\frac{1}{\sqrt{\Sigma_{1}^2 + \Sigma_{2}^2}}(-\Sigma_{1}\Sigma_3,-\Sigma_{2}\Sigma_3,1-\Sigma_3^2),\quad Se_1=Se_2\times Se_3,
	\end{align*}
are both lifts of $\Sigma$ in the sense of Lemma~\ref{lem:lifting}.
\end{remark}

Next, we present a elementary extension result on Sobolev functions with values in $\SO(3)$, based on nearest point projections. The reader will notice that the result is also true more generally for compact embedded submanifolds of $\R^n$. 

\begin{lemma}[Sobolev-extension of \boldmath{$\mathbf{\SO(3)}$}-valued maps]\label{lem:Sobolev-extension}
	Let $2<p\leq \infty$ and $U\subset \R^2$ a bounded Lipschitz domain. 
	If $R\in W^{1,p}(U; \SO(3))$, then there exist an open set $V\subset \R^2$ with $U\Subset V$ and an extension $\bar{R} \in W^{1,p}(V; \SO(3))$ of $R$. 
\end{lemma}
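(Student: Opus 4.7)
My strategy is to first extend $R$ componentwise as an $\R^{3\times 3}$-valued Sobolev function, and then project the extension back onto $\SO(3)$ via the nearest-point projection on a tubular neighborhood. The key enabling observation is that since $p>2$, the extended map is continuous, so one can control how far it deviates from $\SO(3)$.

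As a first step, I apply a standard Sobolev extension operator for bounded Lipschitz domains (for instance, Stein's universal extension) componentwise to $R$, obtaining $\tilde R \in W^{1,p}(\R^2;\R^{3\times 3})$ with $\tilde R|_U = R$. Since $p>2$ and $U\subset \R^2$, Morrey's embedding yields $\tilde R \in C^{0,\alpha}_{\loc}(\R^2;\R^{3\times 3})$ for $\alpha = 1-\tfrac{2}{p}$, so in particular $\tilde R$ is continuous.

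Next, I invoke the tubular neighborhood theorem for the smooth compact embedded submanifold $\SO(3)\subset \R^{3\times 3}$: there is $\eta>0$ such that the open set $N_\eta:=\{A\in\R^{3\times 3} : \dist(A,\SO(3))<\eta\}$ admits a smooth nearest-point retraction $\pi_{\SO(3)}: N_\eta \to \SO(3)$ satisfying $\pi_{\SO(3)}|_{\SO(3)} = \id_{\SO(3)}$. By compactness of $\bar U$ and continuity of $\tilde R$, together with $\tilde R(\bar U)=R(\bar U)\subset \SO(3)\subset N_\eta$, one can find a bounded open neighborhood $V$ of $\bar U$ such that $\tilde R(V)\Subset N_\eta$; in particular $U\Subset V$.

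Finally, I set $\bar R := \pi_{\SO(3)}\circ \tilde R$ on $V$. Since $\pi_{\SO(3)}$ is smooth on $N_\eta$ and hence Lipschitz on the compact subset $\overline{\tilde R(V)}\subset N_\eta$, the Marcus--Mizel chain rule for compositions of Sobolev maps with Lipschitz functions gives $\bar R \in W^{1,p}(V;\SO(3))$ with the pointwise bound $|\nabla \bar R| \leq \mathrm{Lip}\bigl(\pi_{\SO(3)}|_{\overline{\tilde R(V)}}\bigr)\, |\nabla \tilde R|$. On $U$ one has $\tilde R = R\in\SO(3)$, hence $\bar R = \pi_{\SO(3)}(R) = R$, confirming that $\bar R$ extends $R$. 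The only step requiring something beyond routine Sobolev calculus is the existence of the smooth retraction on a tubular neighborhood of $\SO(3)$, which is a standard consequence of the inverse function theorem applied to the normal exponential map; I expect everything else to be essentially mechanical.
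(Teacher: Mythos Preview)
Your proof is correct and follows essentially the same approach as the paper: extend $R$ to a Sobolev map on a larger set, use Morrey's embedding for continuity, invoke the tubular neighborhood theorem to obtain a smooth retraction onto $\SO(3)$, and compose. The only cosmetic difference is that the paper takes $V$ to be the full preimage $\tilde R^{-1}(\Tcal)$ of the tubular neighborhood (using a compactly supported extension and $0\notin\Tcal$ to ensure boundedness), whereas you choose $V$ as a bounded neighborhood of $\bar U$ mapped into $N_\eta$; both choices work.
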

\begin{proof}
According to standard Sobolev theory,
the function $R\in W^{1,p}(U;\SO(3))$ can be extended to an element in $W^{1,p}(\R^2;\R^{3\times 3})$ with compact support, which we denote again by $R$. 
From the tubular neighborhood theorem~\cite[Propositions 6.17 and 6.18]{Lee03}, we conclude the existence of an open bounded neighborhood $\Tcal\subset \R^{3\times 3}$ of $\SO(3)$ with $0\notin \Tcal$ and a smooth retraction map $r: \Tcal \to \SO(3)$, i.e., 
\begin{align*}
	r\restrict{\SO(3)}= {\rm id}_{\R^{3\times 3}}.
\end{align*}
By shrinking $\Tcal$ if necessary, we may assume that $r$ is smooth up to the boundary. 

Let us consider the preimage
	\begin{align*}
		 V=R^{-1}(\Tcal).
	\end{align*}
	We observe that $V$ is open, since $R$ is continuous by Sobolev embedding and that $V$ is bounded as a consequence of $0\notin\Tcal$ and the fact that $R$ vanishes outside of a bounded set. Besides, $U\subset R^{-1}(\SO(3))$ is compactly contained in $V$.
	
	Finally, we define $\bar{R}= r\circ R\restrict{V} : V \to  \SO(3)$, which, in view of 
	\begin{align*}
		\bar{R}\restrict{U}= r\restrict{SO(3)} \circ R\restrict{U} = R\restrict{U},
	\end{align*}
 	is indeed an extension of $R$.
	As $\bar R$ is the composition of a smooth function (up to the boundary) with a $W^{1,p}$-Sobolev map on the bounded set $V$, it follows that 
	$\bar{R}\in W^{1,p}(V;\SO(3))$. This proves the claim. 
\end{proof}

Finally, we state for the reader's convenience a special case of a well-known density result for manifold-valued Sobolev functions, see~e.g.~\cite[Theorem 2.1]{Haj09}.
\begin{lemma}[Density of smooth functions]\label{lem:approx_SO(3)}
	Let $U\subset \R^2$ be open and bounded. The set of smooth functions $C^\infty(U;\SO(3))$ is dense in $W^{1,p}(U;\SO(3))$ for all $p\geq 2$.
\end{lemma}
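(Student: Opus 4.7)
The plan is to follow the classical Schoen--Uhlenbeck mollify-then-project scheme, which applies here because the borderline condition $p \geq 2 = \dim U$ places $R$ in the class of functions of vanishing mean oscillation. As a preliminary step, I would invoke the tubular neighborhood theorem (exactly as in the proof of Lemma~\ref{lem:Sobolev-extension}) to fix an open neighborhood $\Tcal \subset \R^{3\times 3}$ of $\SO(3)$ together with a smooth retraction $r : \Tcal \to \SO(3)$, so that any point of $\R^{3\times 3}$ sufficiently close to $\SO(3)$ can be canonically projected back.

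Since $U$ is only assumed open and bounded (not Lipschitz), a direct extension outside $U$ is unavailable, so I would fix an exhaustion $U_1 \Subset U_2 \Subset \ldots \Subset U$ with $\bigcup_k U_k = U$ and work on each $U_k$ separately via standard mollifications $R_\eps := R \conv \eta_\eps$ for $\eps < \dist(U_k, \partial U)$. The crucial observation is the VMO property of $R$: the Sobolev embedding yields, for each fixed $U_k$,
\[
	\lim_{\eps \to 0} \sup_{x \in U_k} \dashint_{B(x,\eps)} \absB{R(y) - \dashint_{B(x,\eps)} R(z)\dd{z}} \dd{y} = 0.
\]
Combined with $R(x) \in \SO(3)$ for almost every $x$, this forces the ball averages of $R$, and by a routine comparison also the mollified values $R_\eps(x)$, to approach $\SO(3)$ uniformly on $U_k$. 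Hence, for sufficiently small $\eps$, the smooth map $R_\eps$ takes values in $\Tcal$, so that $\widetilde R_\eps := r \circ R_\eps$ is a well-defined smooth $\SO(3)$-valued function on $U_k$. The chain rule together with $R_\eps \to R$ in $W^{1,p}(U_k;\R^{3\times 3})$ and the smoothness of $r$ up to $\partial \Tcal$ then gives $\widetilde R_\eps \to R$ in $W^{1,p}(U_k;\SO(3))$.

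To produce smooth approximants on all of $U$, I would conclude via a Meyers--Serrin style partition-of-unity argument subordinate to the shells $U_{k+1}\setminus \overline{U_{k-1}}$, choosing the mollification radii so small on each shell that the resulting smooth $\R^{3\times 3}$-valued approximant stays inside $\Tcal$ throughout $U$; a final application of $r$ then delivers the desired element of $C^\infty(U;\SO(3))$.

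The main obstacle is the VMO step, namely controlling in a quantitative way how mollifications of an $\SO(3)$-valued VMO map land inside the tubular neighborhood, since a naive convex-combination argument fails because $\SO(3)$ is not convex. This is precisely the point where the borderline hypothesis $p \geq \dim U$ enters in an essential way; below this range, $R$ need not be VMO and the density result is known to fail in general.
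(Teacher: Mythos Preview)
The paper does not actually prove this lemma; it merely records it as a special case of a known density result and cites \cite[Theorem~2.1]{Haj09}. Your proposal, by contrast, sketches the standard Schoen--Uhlenbeck mollify-then-project argument that underlies such results, and the sketch is correct in substance: the VMO property available for $p\geq 2=\dim U$ is exactly what keeps the mollifications inside the tubular neighborhood $\Tcal$, and the exhaustion plus Meyers--Serrin gluing handles the lack of boundary regularity of $U$.

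One point that deserves an extra sentence in your write-up is the gluing step. At a point $x$ in an overlap region, the partition of unity produces a convex combination of, say, $R_{\eps_k}(x)$ and $R_{\eps_{k+1}}(x)$; knowing only that each is individually close to $\SO(3)$ is not enough, since $\SO(3)$ is not convex and the two values could in principle sit near different rotations. The resolution is already implicit in the VMO property you invoke: VMO forces mollifications at different small scales to be uniformly close to \emph{each other} on compact sets (averages over nested balls differ by at most the mean oscillation), so on each overlap the finitely many contributing terms cluster near a common point of $\SO(3)$, and their convex combination remains in $\Tcal$. With this remark added, your argument is complete and recovers the content of the cited reference.
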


\section{Proof of necessity}\label{sec:necessary_proof}
Before we go into details of the asymptotic analysis of weakly convergent sequences as in the definition of $\Acal_0$ (see~\eqref{A_0}), 
let us point out a basic observation about the structure of elements $u_\eps \in \Acal_\eps$. 
As a consequence of the well-known rigidity result by Reshetnyak \cite{Res67}, 
it holds that $u_{\eps}$ is a rigid body motion on each connected component of $\eYrig\cap \Omega$, or in other words, on each individual fiber inside $\Omega$. 

The next lemma establishes the connection between neighboring fibers by estimating the difference between their associated rotations. It constitutes a generalization of \cite[Lemma 2.4]{ChK17} to three dimensions,  
a broader class of domains, and $p\geq 1$. 
For an illustration of the geometric set-up, see~Figure \ref{fig:parallelogram}.

\begin{lemma}\label{lem:neighboring_rotations} 
For given $m\in \R$ and $L_1, L_2, L_3>0$, let $E= E'\times (0,L_3)\subset \R^3$ with 
\begin{align}\label{parallelogram}
	E'=\{ (x_1,x_2)\in \R^2 \colon 0<x_1<L_1, \, mx_1 < x_2 < mx_1 +L_2 \}
\end{align}
and $d=(1+m^2)^{-1/2}(e_1+me_2)\in\R^3$.
Further, let $w_i \colon E \to \R^3$ for $i=1,2$ be affine functions given by $w_{i}(x) = A_{i}x + b_{i}$ for $x\in E$ with $A_{i} \in \R^{3\times 3}$ and $b_{i} \in \R^3$.

If $v\in W^{1,p}(E; \R^3)$ with $p\geq 1$ satisfies the partial boundary conditions
\begin{align}\label{boundaryvalues}
	v = w_{1} \text{ on } \partial E \cap \{ x_{1} = 0 \} \quad \text{ and }\quad v = w_{2} \text{ on } \partial E \cap \{ x_{1} = L_1 \}
\end{align}
in the sense of traces, then
\begin{align}\label{neighbor_affine}
	\int_E |\partial_d v|^p \dd x \geq \frac{C|E|L_3^p}{(1+|m|^p)L_1^p}|(A_2-A_1)e_3|^p
\end{align}
with a constant $C>0$ depending only on $p$.
\end{lemma}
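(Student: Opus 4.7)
The plan is to run a one-dimensional fundamental-theorem-of-calculus argument along line segments parallel to $d$ and then exploit the fact that the boundary data is affine in $x_3$ to isolate $(A_2-A_1)e_3$.

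\emph{Step 1 (slicing along $d$).} First I would observe that the parallelogram $E'$ is foliated by segments $\tau\mapsto(\tau,s+m\tau)$, $\tau\in(0,L_1)$, parametrised by $s\in(0,L_2)$, each connecting the face $\{x_1=0\}\cap\partial E$ to $\{x_1=L_1\}\cap\partial E$. By the standard Fubini/ACL characterisation of Sobolev functions together with the trace hypothesis \eqref{boundaryvalues}, one gets for a.e.\ $(s,x_3)\in(0,L_2)\times(0,L_3)$
\begin{equation*}
w_2(L_1,s+mL_1,x_3)-w_1(0,s,x_3)=\sqrt{1+m^2}\int_0^{L_1}(\partial_d v)(\tau,s+m\tau,x_3)\,d\tau,
\end{equation*}
where the factor $\sqrt{1+m^2}$ accounts for the arc-length parametrisation of the segment.

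\emph{Step 2 (eliminating the affine part in $x_3$).} The left-hand side above is affine in $x_3$ with $x_3$-derivative equal to $(A_2-A_1)e_3$. Since $\partial_{x_3}\partial_d v$ need not exist classically, I would avoid differentiating and instead subtract the identity evaluated at two heights: for a.e.\ $s\in(0,L_2)$ and $x_3,x_3'\in(0,L_3)$,
\begin{equation*}
(x_3-x_3')(A_2-A_1)e_3=\sqrt{1+m^2}\int_0^{L_1}\bigl[(\partial_d v)(\tau,s+m\tau,x_3)-(\partial_d v)(\tau,s+m\tau,x_3')\bigr]\,d\tau.
\end{equation*}

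\emph{Step 3 ($L^p$-estimate and change of variables).} Raising to the $p$-th power, applying Hölder in $\tau$ (picking up $L_1^{p-1}$) and the elementary bound $|a-b|^p\le 2^{p-1}(|a|^p+|b|^p)$, and then integrating over $(s,x_3,x_3')\in(0,L_2)\times(0,L_3)^2$ gives on the left
\begin{equation*}
|(A_2-A_1)e_3|^p\,L_2\cdot\tfrac{2L_3^{p+2}}{(p+1)(p+2)},
\end{equation*}
and on the right a constant (depending only on $p$) times $(1+m^2)^{p/2}L_1^{p-1}L_3$ multiplied by the integral of $|\partial_d v|^p$ over $(\tau,s,x_3)\in(0,L_1)\times(0,L_2)\times(0,L_3)$. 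The change of variables $(\tau,s)\mapsto(x_1,x_2)=(\tau,s+m\tau)$ is measure-preserving and maps the latter domain bijectively onto $E'\times(0,L_3)=E$, turning that integral into $\int_E|\partial_d v|^p\,dx$.

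\emph{Conclusion.} Combining the above and using $(1+m^2)^{p/2}\le C_p(1+|m|^p)$ together with $|E|=L_1L_2L_3$ yields the desired inequality \eqref{neighbor_affine} with a constant depending only on $p$. The only delicate points are verifying the trace identity on a.e.\ segment (essentially the standard one-dimensional restriction theory for Sobolev functions applied after a linear change of coordinates aligning $d$ with a coordinate axis) and keeping track of the powers of $m$, $L_1$, $L_2$, $L_3$ so that the final constant is truly $m$- and $L_i$-independent; these are the places I would be most careful, but no genuine analytic obstacle arises.
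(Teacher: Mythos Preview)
Your proof is correct and follows essentially the same strategy as the paper: slice $E$ along segments parallel to $d$, apply the fundamental theorem of calculus together with Jensen/H\"older to pick up the factor $L_1^{p-1}$, and then isolate the $e_3$-component of $A_2-A_1$.

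The one place where your argument diverges is in how you extract $(A_2-A_1)e_3$ from the affine boundary data. The paper first applies Jensen in the transversal variable $y_2$ to absorb all terms independent of $y_3$ into a single vector $b$, and then solves the minimisation problem $\min_{b\in\R^3}\int_0^{L_3}|y_3(A_2-A_1)e_3+b|^p\,dy_3$ explicitly (reducing to a one-dimensional optimisation after showing the minimiser is parallel to $(A_2-A_1)e_3$). You instead use a doubling trick in $x_3$: subtracting the fundamental-theorem identity at two heights kills the constant (in $x_3$) part outright, so the integration in $s$ becomes trivial and no minimisation is needed. Your route is arguably a bit cleaner and yields the same scaling $\frac{|E|L_3^p}{(1+|m|^p)L_1^p}$; the paper's route has the minor advantage of producing an explicit (and slightly sharper) constant $\frac{1}{2^p(p+1)}$ without the extra factor of $L_3$ from the double integral.
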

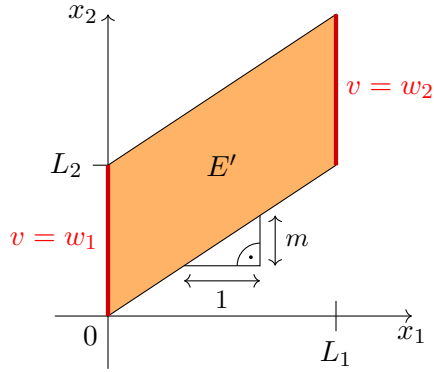
\begin{figure}[h!]
	\centering
	\begin{tikzpicture}
		\draw (-0.7,0)--(0,0);
		\draw (0,-0.7) --(0,0);
		\draw [->] (0,0) -- (4,0);
		\draw [->] (0,0) -- (0,4);
		\draw (4,0) node [anchor=north] {$x_1$};
		\draw (0,4) node [anchor=east] {$x_2$};
		\draw (3,-0.2) -- (3,0.2);
		\draw (3,-0.2) node [anchor=north] {$L_1$};
		\draw (-0.2,2) -- (0.2,2);
		\draw (0,0) node [anchor = north east] {$0$};
		\draw (-0.2,2) node [anchor=east] {$L_2$};
		
		\draw [fill=orange!60!white] (0,0) -- (0,2) -- (3,4) -- (3,2) -- cycle;
		\draw (1.5,2) node {$E'$};
		
		\draw [ultra thick, red!80!black] (0,0) -- (0,2);
		\draw [ultra thick, red!80!black] (3,4)--(3,2);
		\draw (0,1) node [red, anchor=east] {$v=w_1$};
		\draw (3,3) node [red, anchor=west] {$v=w_2$};
		
		\draw (1,0.66666) -- (2,0.66666) -- (2,1.33333);
		\draw (1.7,0.66666) arc (180:90:0.3);
		\draw [fill=black] (1.88,0.78) circle (0.5pt);
		\draw [<->] (1,0.46666) -- (2,0.46666);
		\draw (1.5, 0.46666) node [anchor=north] {\small $1$};
		\draw [<->] (2.2,0.66666) -- (2.2,1.33333);
		\draw (2.2, 1) node [anchor=west] {\small $m$};
	\end{tikzpicture}
	\caption{Illustration of the set $E'$ with slope $m$ as in \eqref{parallelogram} and the boundary values of $v$.}\label{fig:parallelogram}
\end{figure}
\begin{proof}
Since the inequality~\eqref{neighbor_affine} is continuous in $v$ with respect to the $W^{1,p}$-norm, it suffices by a density argument to prove the statement for smooth functions $v$ that attain the boundary values~\eqref{boundaryvalues} classically.

A change of variables gives 
\begin{align}\label{11}
\begin{split}
	\int_E |\partial_d v(x)|^p \dd x &=  \frac{1}{(1+m^2)^{p/2}}\int_Q |\partial_1 u(y)|^p\dd y\\
		&= \frac{1}{(1+m^2)^{p/2}}\int_{0}^{L_3} \int_{0}^{L_2}  \int_0^{L_1}|\partial_1 u(y_1,y_2,y_3)|^p \dd y_1 \dd y_2\dd y_3,
		\end{split}
\end{align}
where $u(y) = v(y_1,y_2 + my_1, y_3)$ for $y=(y_1,y_2, y_3)\in Q=(0,L_1)\times (0,L_2)\times (0,L_3)$. 
By Jensen's inequality, applied iteratively to the one-dimensional integrals over $(0, L_1)$ and $(0, L_2)$, we obtain in view of the assumption on the boundary values of $v$ in~\eqref{boundaryvalues} that
\begin{align}\label{x_1_minimized}
	\int_Q &|\partial_1u(y)|^p \dd x  \geq L_1^{1-p}\int_{0}^{L_3} \int_{0}^{L_2}|v(L_1,y_2+ mL_1,y_3)-v(0,y_2,y_3)|^p \dd y_2\dd y_3 \nonumber \\
	&\geq (L_1L_2)^{1-p}\int_{0}^{L_3} \Big| \int_0^{L_2} \sum_{k=2}^3 y_k (A_2-A_1) e_k  \dd y_2 + L_1 A_2 e_1 + mL_1 A_2 e_2 + b_2-b_1\Big|^p\dd y_3 \nonumber \\ 
	&\geq \frac{L_2}{L_1^{p-1}}\min_{b\in \R^3}\int_{0}^{L_3} |y_3(A_2-A_1)e_3 + b|^p\dd y_3. 
\end{align}

Next, we address the optimization problem in~\eqref{x_1_minimized}. The observation that any minimizer is parallel to $(A_2-A_1)e_3$, which follows from the elementary estimate
\begin{align*}
	|a + b|^2 \geq |a|^2 + |b|^2 - 2|a||b| 
	= \bigl|a - \tfrac{|b|}{|a|} a\bigr|^2\quad \text{for any $a, b\in \R^3$ with $a\neq 0$, }
\end{align*}
allows us to reduce~\eqref{x_1_minimized} to a one-dimensional minimization. Then, 
\begin{align}\label{second}
\begin{split}
	\min_{b\in \R^3}\int_{0}^{L_3} |y_3(A_2-A_1)e_3 + b|^p\dd y_3  = \min_{\lambda\in\R} \int_{0}^{L_3} |y_3+\lambda|^p |(A_2-A_1)e_3|^p\dd y_3.
	\end{split}
\end{align}

We join~\eqref{second} with \eqref{x_1_minimized} and~\eqref{11} to conclude that
\begin{align*}
	\int_E |\partial_d v(x)|^p \dd x &\geq \frac{L_3^{p+1}L_2}{2^p(p+1)(1+m^2)^{p/2}L_1^{p-1}}|(A_2-A_1)e_3|^p.
\end{align*}
This finishes the proof, considering that $|E|=L_1L_2L_3$ and $(1+m^2)^{p/2}\leq c (1+|m|^p)$ with a constant $c>0$ depending on $p$. 
\end{proof}

With these preparations, we can now prove the following proposition, which implies the necessity statement of Theorem \ref{theo:characterization}. The arguments combine ideas from the related papers~\cite{ChK17, FJM02} along with the new estimates from~Lemma~\ref{lem:neighboring_rotations}.
\begin{proposition}\label{prop:rigidity_general} 
Let $p> 1$ and suppose that $(u_\eps)_\eps \subset W^{1,p}(\Omega; \R^3)$ is a sequence with 
	\begin{align}\label{strictinclusion_prop}
		\nabla u_\eps \in \SO(3) \text{ a.e.~in } \eYrig\cap \Omega
	\end{align}
	for all $\eps$, such that 
 $u_\eps\weakly u$ in $W^{1,p}(\Omega; \R^3)$ for $u\in W^{1,p}(\Omega; \R^3)$.
	Then, 	
	\begin{align*}
		\partial_3 u \in W^{1,p}(\omega;\R^3) \text{ with } |\partial_3 u|=1 \text{ a.e.~in $\omega$,}
	\end{align*}
	or equivalently, there are $\Sigma\in W^{1,p}(\omega;\Scal^2)$ and $d\in W^{1,p}(\omega;\R^3)$ with 
	\begin{align*}
		u(x) = x_3\Sigma(x')  + d(x'), \quad x\in \Omega. 
	\end{align*} 
\end{proposition}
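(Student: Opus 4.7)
The plan is to follow the two-step strategy familiar from \cite{ChK17}: first extract a Sobolev limit of the rotations prescribed on the fibers by rigidity, and then identify this limit with $\partial_3 u$.

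\textbf{Step 1 (Sobolev-compactness of the fiber rotations).} By Reshetnyak's rigidity theorem \cite{Res67}, applied on each (cylindrical) connected component of $\eYrig\cap\Omega$, there exist $R_\eps^k\in\SO(3)$ and $b_\eps^k\in\R^3$ with $u_\eps(x)=R_\eps^k x+b_\eps^k$ on $\omega_\eps^k\times(0,L)$. I define the piecewise-constant map $\Sigma_\eps:\omega\to\Scal^2$ by $\Sigma_\eps(x'):=R_\eps^k e_3$ for $x'\in\eps(k+Y)\cap\omega$. For each pair of neighbouring cells contained in $\omega$ and each $i\in\{1,2\}$, I apply Lemma~\ref{lem:neighboring_rotations} to a parallelepiped $E_\eps^{k,i}$ whose two opposite faces lie inside the squares $S_\eps^k$ and $S_\eps^{k+e_i}$ (where $u_\eps$ coincides with the respective affine rigid motions), with slope $m=0$ and side lengths $L_1,L_2\sim\eps$, $L_3=L$. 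Summing the resulting estimate $|\Sigma_\eps^{k+e_i}-\Sigma_\eps^k|^p\leq C\eps^{p-2}\int_{E_\eps^{k,i}}|\partial_i u_\eps|^p\dd x$ over $k$ (with the $E_\eps^{k,i}$ having bounded overlap) and recognising the left-hand side as an $L^p$-difference quotient, I obtain the $\eps$-uniform estimate
\begin{equation*}
\int_{\omega'}\left|\frac{\Sigma_\eps(x'+\eps e_i)-\Sigma_\eps(x')}{\eps}\right|^p\dd x'\leq C\norm{\nabla u_\eps}_{L^p(\Omega)}^p\leq C
\end{equation*}
on every $\omega'\Subset\omega$. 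Combined with $|\Sigma_\eps|\equiv 1$, Fr\'echet--Kolmogorov extracts a subsequence $\Sigma_\eps\to\Sigma$ strongly in $L^p_{\loc}(\omega;\R^3)$ and a.e., and the difference-quotient bound upgrades to $\Sigma\in W^{1,p}(\omega;\Scal^2)$ (the unit-length condition transfers by the a.e.\ convergence).

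\textbf{Step 2 (Identification $\Sigma=\partial_3 u$).} The rigid-motion structure on each fiber makes the quantity $u_\eps(x',x_3)-x_3\Sigma_\eps(x')$ independent of $x_3$ whenever $x'\in\omega_\eps^k$. Writing $\chi'_\eps$ for the indicator of $\bigcup_k\omega_\eps^k$ in $\omega$, this reads
\begin{equation*}
\chi'_\eps(x')\bigl[u_\eps(x',x_3)-x_3\Sigma_\eps(x')\bigr]=\chi'_\eps(x')q_\eps(x')
\end{equation*}
for some $x_3$-independent map $q_\eps$. Testing against $\psi(x')\zeta(x_3)$ with $\psi\in C_c^\infty(\omega)$ and $\zeta\in C_c^\infty((0,L))$ satisfying $\int_0^L\zeta=0$ annihilates the $q_\eps$-term and leaves
\begin{equation*}
\int_\Omega\chi'_\eps u_\eps\cdot\psi\,\zeta\dd x=\int_\Omega\chi'_\eps\, x_3\Sigma_\eps\cdot\psi\,\zeta\dd x.
\end{equation*}
By \eqref{square-inside}, $\chi'_\eps\weaklystar\theta$ in $L^\infty(\omega)$ along a further subsequence, with $\theta\geq\delta^2>0$ a.e.; meanwhile $u_\eps\to u$ in $L^p(\Omega)$ by Rellich--Kondrachov and $\Sigma_\eps\to\Sigma$ in $L^p(\omega)$ by Step~1. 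The resulting strong/weak-$*$ pairings pass to the limit and yield, for all admissible $\psi,\zeta$,
\begin{equation*}
\int_0^L\zeta(x_3)\int_\omega\bigl[u(x',x_3)-x_3\Sigma(x')\bigr]\theta(x')\psi(x')\dd x'\dd x_3=0.
\end{equation*}
Since $\zeta$ ranges over zero-mean test functions, the inner $x'$-integral is constant in $x_3$; differentiating in $x_3$ distributionally, varying $\psi$, and invoking $\theta>0$ a.e.\ then forces $\partial_3 u(x)=\Sigma(x')$ for a.e.\ $x\in\Omega$.

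\textbf{Step 3 (Conclusion and main obstacle).} Setting $d:=u-x_3\Sigma\in W^{1,p}(\Omega;\R^3)$, one has $\partial_3 d=0$, whence $d\in W^{1,p}(\omega;\R^3)$ and $u(x)=x_3\Sigma(x')+d(x')$, which is the asserted representation. The hard part is Step~2: a direct passage to the limit in $\partial_3 u_\eps=\Sigma_\eps$ on fibers faces a product of a weak limit ($\partial_3 u_\eps\weakly\partial_3 u$) and a weak-$*$ limit ($\chi'_\eps\weaklystar\theta$), which in general does not converge to the product of the limits. Packaging the fiber rigidity into the single $x_3$-independent quantity $u_\eps-x_3\Sigma_\eps$ and eliminating its residual part with a zero-mean test function in $x_3$ converts the obstruction into a strong/weak-$*$ pairing, which does pass to the limit.
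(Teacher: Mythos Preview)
Your argument is correct, and your Step~2 takes a genuinely different route to the identification $\partial_3 u=\Sigma$ than the paper does. Where you test the fiber identity $\chi'_\eps(u_\eps-x_3\Sigma_\eps)=\chi'_\eps q_\eps(x')$ against products $\psi(x')\zeta(x_3)$ with $\int_0^L\zeta=0$ and pass to the limit via a strong/weak-$*$ pairing, the paper instead builds a global piecewise-affine comparison map $w_\eps=\sigma_\eps+b_\eps$ (with $\sigma_\eps(x)=\sum_k (R_\eps^k x)\,\mathbbm{1}_{P_\eps^k}$) and proves $\|u_\eps-w_\eps\|_{L^p(U)}\to 0$ by Poincar\'e's inequality on each cell, using that $u_\eps-w_\eps$ vanishes on the fiber cross-sections; from the resulting splitting $u=\sigma+\hat b$ with $\partial_3\hat b=0$ one then reads off $\partial_3 u=\partial_3\sigma=\lim\Sigma_\eps=\Sigma$. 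Your approach bypasses Poincar\'e and the auxiliary map $w_\eps$ entirely, at the cost of passing through the subsequence-dependent density $\theta$, which is harmless since $\theta\geq\delta^2>0$ a.e.

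One point in your Step~1 deserves more care: the single-step difference-quotient bound for the specific shifts $\eps e_i$ is not by itself sufficient for Fr\'echet--Kolmogorov (which requires uniform $L^p$-equicontinuity for \emph{all} small shifts $\xi$), nor does it directly give $\Sigma\in W^{1,p}$ in the limit, since both the function and the step size vary with $\eps$. The paper fills this gap by a telescoping argument along a discrete path of $m_\eps\sim|\xi|/\eps$ steps, leading to $\|\Sigma_\eps(\cdot+\xi)-\Sigma_\eps\|_{L^p(U')}^p\leq C(|\xi|^p+\eps^p)$ for arbitrary small $\xi$; this estimate delivers both the equicontinuity for compactness and, after dividing by $|\xi|^p$ and sending $\eps\to 0$, the uniform difference-quotient bound on $\Sigma$. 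You should make this telescoping explicit.
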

\begin{proof}
By an exhaustion argument, it suffices to prove for any cylindrical open set $U =U'\times (0, L)\subset \Omega$ with 
$U'\Subset \omega$ that $\partial_3 u\in W^{1,p}(U';\R^3)$ and $|\partial_3 u| = 1$ a.e.~in $U'$.  

We define for $\eps>0$,
\begin{align}\label{cuboids}
	P_{\eps}^{k}= \eps(k+ Y) \times (0, L) \quad \text{ for } k\in \mathbb{Z}^2,
\end{align}
and let the index set $I_\eps$ label the cuboids $P_\eps^k$ overlapping with $U$ 
i.e., $I_{\eps} =\{ k \in \Z^2 \colon P_\eps^k\cap U \neq \emptyset\}$.
Choosing $\eps>0$ sufficiently small, we may assume that 
\begin{align}\label{two_boxes}
	U\subset \bigcup_{k \in I_{\eps}} P_{\eps}^{k} \subset \Omega.
\end{align}

We split the rest of the proof in four steps.\medskip 

\textit{Step~1: Rigidity and approximation by piecewise affine functions.}
In this step, we tailor the strategy of~\cite[Proposition 2.1]{ChK17} to our setting, where the rigid components are thin in two directions instead of one. Precisely, for every $k \in I_{\eps}$, we apply the well-known rigidity result by Reshetnyak~\cite{Res67} in $\eYrig \cap P_{\eps}^{k}$ (recall $\eYrig$ from \eqref{Yrig_eps}), to find rotation matrices $R_{\eps}^{k} \in \SO(3)$ and translation vectors $b_{\eps}^{k} \in \R^3$ such that
\begin{align} \label{rigidity_on_fibers_0}
	u_{\eps}(x)=R_{\eps}^{k} x + b_{\eps}^{k} \quad \text{ for } x\in \eYrig \cap P_{\eps}^{k}.
\end{align}
For every $\eps>0$, we consider the auxiliary function $w_{\eps} = \sigma_{\eps} + b_{\eps} \in L^\infty(\Omega;\R^3)$ given by 
\begin{align*}
	\sigma_{\eps}(x) = \sum_{k \in I_{\eps}} (R^{k}_{\eps}x) \mathbbm{1}_{P_{\eps}^{k}}(x), 
		\quad \text{ and } \quad b_{\eps}(x) = \sum_{k\in I_{\eps}} b_{\eps}^{k} \mathbbm{1}_{P_{\eps}^{k}}(x), \quad x\in \Omega.
\end{align*}
We show that
\begin{equation}\label{u_e-w_e}
	\lim_{\eps \to 0} \| u_{\eps} - w_{\eps} \|_{L^{p}(U; \R^3)} = 0. 
\end{equation}
Indeed, with $u_{\eps}= w_{\eps}$ in $\eYrig \cap P_{\eps}^{k}$ for each index $k \in I_{\eps}$ and 
Poincar\'e's inequality applied in the cross-section variables, it follows that
\begin{align*}
	\int_{P_{\eps}^{k}} |u_{\eps}-w_{\eps}|^p \dd x &
		= \int_0^L\int_{(\eYrig \cap P_{\eps}^{k})'} |u_{\eps}-w_\eps|^p \dd x' \dd x_3\\
	&\le C\eps^p \int_0^L \int_{(\eYrig \cap P_{\eps}^{k})'} |\nabla' u_{\eps}-\nabla' w_\eps|^p \dd x'\dd x_3 \\
	&\le C\eps^p \int_{\eYrig \cap P_{\eps}^{k}} |\nabla u_{\eps}-R^{k}_{\eps}|^p \dd x \le C\eps^p \int_{P_{\eps}^{k}} |\nabla u_{\eps}-\nabla w_\eps|^p \dd x\\
	&\le C\eps^{p} \bigl( \|\nabla u_{\eps}\|_{L^p(P_{\eps}^{k}; \R^{3\times 3})}^p + |P_{\eps}^{k}|\bigr);
\end{align*}
here, we have used in particular that $u_\eps - w_\eps = 0$ on $\partial \omega_\eps^k= \partial(\eYrig \cap P_{\eps}^{k})'$ in the sense of traces and that the Poincar\'e constant scales linearly with the diameter of the domain.
Summing over all $k\in I_{\eps}$ then yields
\begin{align*}
	\| u_{\eps} - w_{\eps} \|_{L^{p}(U; \R^3)}^p \le C\eps^{p} \bigl( \| u_{\eps}\|_{W^{1,p}(\Omega; \R^3)}^p + |\Omega|\bigr)
\end{align*}
in light of \eqref{two_boxes}. Since $(u_{\eps})_{\eps}$ is bounded in $W^{1,p}(\Omega;\R^3)$ as a weakly convergent sequence, we conclude \eqref{u_e-w_e}. 

As a consequence, $(w_{\eps})_{\eps}$ is bounded in $L^p(U;\R^3)$ and, thus, as $(\sigma_{\eps})_{\eps}$ is uniformly bounded in $L^\infty(U;\R^3)$, the sequence $(b_{\eps})_{\eps}$ is bounded in $L^{p}(U;\R^3)$ as well. 
Therefore, (after passing to non-relabeled subsequences) there exist limit functions $\sigma \in L^{\infty}(U; \R^3)$ and $\hat{b}\in L^p(U; \R^3)$ such that 
$\sigma_{\eps} \weaklystar \sigma$ in $L^{\infty}(U; \R^3)$ and $b_{\eps} \rightharpoonup \hat{b}$ in $L^p(U; \R^3)$. 
Since this implies $w_{\eps}\rightharpoonup \sigma + \hat{b}$ in  $L^p(U; \R^3)$, we finally deduce the identity 
\begin{align}\label{sum_of_two}
	u = \sigma +\hat{b},
\end{align}
in light of the weak convergence of $(u_\eps)_\eps$ and \eqref{u_e-w_e}; note that $\hat b$ is independent of $x_3$.\medskip

\textit{Step~2: Compactness of an auxiliary function.}
For $\eps>0$, define $\Sigma_{\eps}\in L^{\infty}(\omega;\Scal^2)$ by  
\begin{align}\label{Sigma_e}
	\Sigma_{\eps} (x') = \sum_{k \in I_{\eps}} R^{k}_{\eps}e_3 \mathbbm{1}_{\eps(k + Y)}(x'),\quad x'\in \omega,
\end{align}
with $R_\eps^k$ as in \eqref{rigidity_on_fibers_0}. We will show with the help of the Fr\'echet-Kolmogorov compactness theorem and the estimates of Lemma \ref{lem:neighboring_rotations} that $(\Sigma_\eps)_\eps$ has a strongly convergent subsequence.
The proof strategy is inspired by and follows the lines of~\cite[Theorem 4.1]{FJM02}. The main difference in our approach is the estimates for the rotations on two neighboring cells, as derived in Lemma~\ref{lem:neighboring_rotations}.

Let us introduce the following sets: For $\eps>0$ and $k\in \Z^2$, we take $\hori{\eps}{k},\verti{\eps}{k}\subset \R^2$ to be the open parallelograms determined by the two parallel lines
\begin{align}\label{E1}
	a_\eps^k+ \eps\left(\{\tfrac{\delta}{2}\} \times (-\tfrac{\delta}{2},\tfrac{\delta}{2})\right) \quad \text{and} \quad
		a_\eps^{k+e_1} +\eps\left(\{-\tfrac{\delta}{2}\}\times (-\tfrac{\delta}{2},\tfrac{\delta}{2})\right),
\end{align}
and
\begin{align}\label{E2}
	a_\eps^k + \eps\left((-\tfrac{\delta}{2},\tfrac{\delta}{2})\times\{\tfrac{\delta}{2}\}\right) \quad \text{and} \quad
		a_\eps^{k+e_2} + \eps\left((-\tfrac{\delta}{2},\tfrac{\delta}{2})\times \{-\tfrac{\delta}{2}\}\right),
\end{align}
respectively; for simplicity, we restrict ourselves to the special case when the centers $a_\eps^k$ of the squares $S_\eps^k$ (cf.~\eqref{square-inside}) are periodically arranged and given by $a_\eps^k = \eps(k+a)$
with a fixed $a\in [\alpha+\tfrac{\delta}{2}, 1-\alpha-\tfrac{\delta}{2})^2$. 
This specific choice of $a_\eps^k$ means that the $\hori{\eps}{k}$ and $\verti{\eps}{k}$ are in fact rectangles, as illustrated~in Figure~\ref{fig:parallelograms33}. In Remark~\ref{rem:parallelogram} below, we explain how the arguments can be modified to cover the general case.

\begin{figure}[h!]
	\centering
	\begin{tikzpicture}
		\begin{scope}[scale=0.8, shift={(-0.8,-0.85)}]
 			\draw [fill=gray!15!white] (1.5,1) [out=0,in=180] to (2.5,1.5) [out=0,in=225] to (3.8,1.3) [out=45,in=-90] to (3.8,2.6) [out=90,in=-90] to (4,3.7) [out=90,in=0] to (2.6,4) [out=180,in=0] to (1.5,4) [out=180,in=60] to (1,2) [out=240,in=180] to (1.5,1);	
 		\end{scope}
 		\begin{scope}[scale=0.8, shift={(-0.8,2.55)}]
 			\draw [fill=gray!15!white] (1.5,1) [out=0,in=180] to (2.5,1.5) [out=0,in=225] to (3.8,1.3) [out=45,in=-90] to (3.8,2.6) [out=90,in=-90] to (4,3.8) [out=90,in=0] to (2.6,4) [out=180,in=0] to (1.5,4) [out=180,in=60] to (1,2) [out=240,in=180] to (1.5,1);	
 		\end{scope}
 		\begin{scope}[scale=0.8, shift={(2.55,-0.85)}]
 			\draw [fill=gray!15!white] (1.5,1) [out=0,in=180] to (2.5,1.5) [out=0,in=225] to (3.8,1.3) [out=45,in=-90] to (3.8,2.6) [out=90,in=-90] to (4,3.7) [out=90,in=0] to (2.6,4) [out=180,in=0] to (1.5,4) [out=180,in=60] to (1,2) [out=240,in=180] to (1.5,1);	
 		\end{scope}
		\begin{scope}[scale=2.7]
			\draw (0,0) rectangle (1,1);
			\draw (1,0) rectangle (2,1);
			\draw (0,1) rectangle (1,2);
			\draw (0.5,0) node [anchor=north] {\small $\eps(k+Y)$};
			\draw (1.5,0) node [anchor=north] {\small $\eps(k+e_1+Y)$};
			\draw (0.5,2) node [anchor=south] {\small $\eps(k+e_2+Y)$};
			
			\draw [fill=orange!60!white] (0.75,0.25) rectangle (1.25,0.75);
			\draw [fill=orange!60!white] (0.25,0.75) rectangle (0.75,1.25);
			\draw (0.5,1) node {\small $\verti{\eps}{k}$};
			\draw (1,0.5) node {\small $\hori{\eps}{k}$};
			
			\draw (0.25,0.25) rectangle (0.75,0.75);
			\draw (1.25,0.25) rectangle (1.75,0.75);
			\draw (0.25,1.25) rectangle (0.75,1.75);
			\draw [fill=black](0.5,0.5) circle (0.3pt);
			\draw [fill=black](1.5,0.5) circle (0.3pt);
			\draw [fill=black](0.5,1.5) circle (0.3pt);
			\draw (0.5,0.5) node [anchor=north] {\small $a_\eps^k$};
			\draw (1.55,0.5) node [anchor=north] {\small $a_\eps^{k+e_1}$};
			\draw (0.5,1.5) node [anchor=south] {\small $a_\eps^{k+e_2}$};
			\draw (0.18,0.17) node {\small $\omega_\eps^k$};
			
			\draw [<->] (0.25,1.8) --++ (0.5,0);
			\draw (0.5,1.87) node {\small $\eps\delta$};
 		\end{scope}
	\end{tikzpicture}
	\caption{Illustration of a scaled and translated unit cell and two of its neighbors; the rectangles $\hori{\eps}{k}$ and $\verti{\eps}{k}$ connect the horizontally and vertically neighboring squares $S_\eps^k=a_\eps^k + \eps(-\textstyle\frac{\delta}{2},\frac{\delta}{2})^2$, respectively. }
	\label{fig:parallelograms33}
\end{figure}	

Moreover, let $N^k_{\eps}$ be the union of $\eps(k + Y)$ and its eight neighboring cells, i.e.,
\begin{align*}
	N_{\eps}^k = \bigcup_{e \in J_k} \eps (e + Y) \quad \text{with}\ J_k= k+\{0, \pm e_1, \pm e_2, (\pm 1, \pm 1) \}\subset \Z^2;
\end{align*}
observe that $N_\eps^k\subset \omega$ for $\eps$ sufficiently small.

Let $x'\in \eps(k+Y)$  and $y' \in N_{\eps}^k$ with $k\in I_\eps$. Then,
\begin{align*}
	|\Sigma_{\eps}(x') -\Sigma_{\eps}(y')|^p \leq C \Bigl(\sum_{e\in J_k\cap (J_k-e_1)} |(R^{e+e_1}_{\eps} - R^{e}_{\eps}) e_3|^p +  \sum_{e\in J_k\cap(J_k-e_2)} |(R^{e+e_2}_{\eps} - R^{e}_{\eps}) e_3|^p \Bigr).
\end{align*} 
To each term on the right-hand side, we now apply Lemma \ref{lem:neighboring_rotations}, or the analogon thereof for switched roles of the variables $x_1$ and $x_2$, with $m=0$ and the domains $E=\hori{\eps}{k}\times(0,L)$ and  $E=\verti{\eps}{k}\times(0,L)$ (cf.~\eqref{E1} and~\eqref{E2}), respectively, to find that
\begin{align}\label{est998}
	|\Sigma_{\eps}&(x') -\Sigma_{\eps}(y')|^p \nonumber\\
		&\leq C\eps^{p-2}\Bigl(\sum_{e\in J_k\cap (J_k-e_1)} \norm{\partial_1 u_\eps}_{L^p(\hori{\eps}{e}\times(0,L);\R^3)}^p + \sum_{e\in J_k\cap(J_k-e_2)}\norm{\partial_2u_\eps}_{L^p(\verti{\eps}{e}\times(0,L);\R^3)}^p \Bigr) \nonumber \\
	&\leq C \eps^{p-2}\norm{\nabla' u_\eps}_{L^p(N_\eps^k\times(0,L);\R^{3\times 2})}^p,
\end{align}
where $C>0$ depends on $L$, $p$ and $\delta$.

Now, let $\xi \in \R^2$ be such that $|\xi| < \frac{1}{2} \dist(U', \partial \omega)$ and set $m_{\eps}=\lceil \frac{|\xi|_{\infty}}{\eps} \rceil$ with $|\xi|_{\infty}:=\max\{|\xi_1|, |\xi_2|\}$. Choosing $m_{\eps} +1$ points $0=\xi\ui{0}, \xi\ui{1}, \ldots, \xi\ui{m_{\eps}}= \xi$ such that $|\xi\ui{j+1}-\xi\ui{j}|_{\infty} \leq \eps$ for every $j=0, \ldots, m_{\eps}-1$ generates a discrete path from the origin to $\xi$ with maximal step width $\eps$, and it follows via a telescoping sum argument and the discrete H\"older's inequality that
\begin{align*}
	|\Sigma_{\eps}(x') - \Sigma_{\eps}(x'+\xi)|^p \le m_{\eps}^{p-1} \sum_{j=0}^{m_{\eps}-1} |\Sigma_{\eps}(x' +\xi\ui{j} ) - \Sigma_{\eps}(x' + \xi\ui{j+1})|^p. 
\end{align*}
After integration over $\eps(k+Y)$ and along with~\eqref{est998}, one obtains
\begin{align*} 
	\int_{\eps(k + Y)} |\Sigma_{\eps}(x' +\xi ) - \Sigma_{\eps}(x')|^p \dd x' \leq C m_\eps^{p-1}\eps^{p}\sum_{j=0}^{m_\eps-1}\norm{\nabla'u_\eps}_{L^p((N_\eps^k+\xi\ui{j})\times(0,L);\R^{3\times 2})}^p.
\end{align*}
By summing over all $k\in I_{\eps}$, we infer in view of $m_{\eps}\leq 2 \frac{|\xi|}{\eps}+1$ and the boundedness of the sequence $(\nabla'u_\eps)_\eps \subset L^p(\Omega;\R^{3\times 2})$ that
\begin{align}\label{estimate_frechet}
	\begin{split}
		&\int_{U'} |\Sigma_{\eps}(x' +\xi ) - \Sigma_{\eps}(x')|^p \dd x' \\  
		& \qquad\qquad \le C m_\eps^{p-1}\eps^{p}\sum_{j=0}^{m_\eps-1}\sum_{k\in I_\eps}\norm{\nabla'u_\eps}_{L^p((N_\eps^k+\xi\ui{j})\times(0,L);\R^{3\times 2})}^p \\
		&\qquad\qquad \leq C m_\eps^{p}\eps^{p}\norm{\nabla'u_\eps}_{L^p(\Omega;\R^{3\times 2})}^p \leq C \left(|\xi|^p + \eps^{p}\right),
	\end{split}
\end{align}
for $\eps>0$ sufficiently small, with a constant $C>0$ depending on $L$, $p$ and $\delta$.

Hence, the Fr\'echet-Kolmogorov theorem (see e.g.~\cite[Theorem 4.16]{Alt16}) implies the existence of $\Sigma\in L^p(U';\R^3)$ and a subsequence (not relabeled) of $(\Sigma_\eps)_\eps$ such that 
\begin{align}\label{Sigma_convergence}
	\Sigma_\eps \to \Sigma \text{ in $L^p(U';\R^3)$ and also pointwise a.e.~in $U'$.}
\end{align} 
Since $|\Sigma_\eps|=1$ in $U'$ (cf.~\eqref{Sigma_e}), the limit function $\Sigma$ satisfies
\begin{align*}
	|\Sigma|=1\text{ a.e.~in $U'$.}
\end{align*}\medskip

\textit{Step~3: Regularity of $\Sigma$.}
If we divide~\eqref{estimate_frechet} by $|\xi|^p$ and take the limit $\eps\to 0$, it follows under consideration of~\eqref{Sigma_convergence} that 
\begin{align}\label{difference_quotient}
	\left\|\frac{\Sigma(\cdot +\xi) - \Sigma}{|\xi|}\right\|_{L^p(U';\R^3)}^p  \leq C,
\end{align}
which implies $\Sigma\in W^{1,p}(U';\R^3)$.

Notice that the above-mentioned exhaustion argument exploits that the constant in \eqref{difference_quotient} is independent of $U'$.
\medskip

\textit{Step~4: Properties of the limit deformation $u$.} 
Recall that $u=\sigma + \hat{b}$ with $\partial_3 \hat b=0$ according to~\eqref{sum_of_two}. Then, \begin{align}\label{x_3_derivative}
	\partial_3 u = \partial_3 (\sigma + \hat b) = \partial_3 \sigma = \Sigma,
\end{align}
where the last identity follows from the observation that $\partial_3 \sigma_{\eps} =\Sigma_\eps\to \Sigma$ by
~\eqref{Sigma_convergence}. 

As an immediate consequence of~\eqref{x_3_derivative},
\begin{align*} 
	\partial_3 u \in W^{1,p}(U'; \R^3) \text{ with $|\partial_3u| =1\text{ a.e.~in } U'$,}
\end{align*}
or equivalently, $u(x) = x_3\Sigma(x') + d(x')$ with some $d\in W^{1,p}(U';\R^3)$. This concludes the proof.
\end{proof}

\begin{remark}[General distribution of fiber cross-sections]\label{rem:parallelogram}
	In Step 2 of the previous proof, it is assumed for simplicity that the squares $S_\eps^k$ inside the fiber cross-sections $\omega_\eps^k$ are periodically distributed. This remark addresses the necessary adaptations in order to cover the non-periodic case, where the sets $\hori{\eps}{k}$ and $\verti{\eps}{k}$ in~\eqref{E1} and~\eqref{E2}  may be general parallelograms. 
	Indeed, in this case, we can derive the estimate
	\begin{align}\label{frechet_slopes}
		\int_{U'} |\Sigma_\eps(x'+ \xi)-\Sigma_\eps(x')|^p \dd x' \leq C\sup_{k\in I_\eps}(1+\max\{|m_\eps^{k,\to}|,|m_\eps^{k,\uparrow}|\}^p)(|\xi|^p+\eps^p),
	\end{align}
	where $C>0$ depends on $L$, $\delta$ and $p$; here, the quantities $m_\eps^{k,\to},m_\eps^{k,\uparrow}\in\R$ are the slopes corresponding to $\hori{\eps}{k}$ and $\verti{\eps}{k}$, respectively, cf.~ Lemma \ref{lem:neighboring_rotations}. 
	Under the assumptions \eqref{unif-compact-condition} and \eqref{square-inside}, it holds that
	\begin{align}\label{max_slope}
		\sup_{k\in I_\eps} \max\{|m_\eps^{k,\to}|,|m_\eps^{k,\uparrow}|\} \leq \frac{1-2\alpha-\delta}{2\alpha},
	\end{align}
	which follows from a simple geometric argument, see Figure \ref{fig:max_slope}. 
	Thus, combining \eqref{max_slope} with \eqref{frechet_slopes} yields \eqref{estimate_frechet} with a constant depending on $L$, $p$ and $\delta$, as well as on $\alpha$.

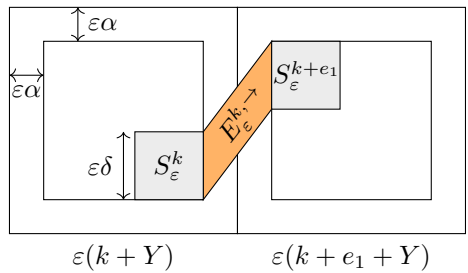
\begin{figure}[h!]
	\centering
	\begin{tikzpicture}
		\begin{scope}[scale=3]
			\draw (0,0) rectangle (1,1);
			\draw (1,0) rectangle (2,1);
			\draw (0.5,0) node [anchor=north] {\small $\eps(k+Y)$};
			\draw (1.5,0) node [anchor=north] {\small $\eps(k+e_1+Y)$};
			
			\draw [fill=orange!60!white] (0.85,0.15)--(0.85,0.45) -- (1.15,0.85) -- (1.15,0.55) -- cycle;
			\draw (1.025,0.525) node[rotate=52] {\small $\hori{\eps}{k}$};
			
			\draw (0.15,0.15) rectangle (0.85,0.85);
			\draw (1.15,0.15) rectangle (1.85,0.85);
			
			\draw [fill=gray!15!white] (0.55,0.15) rectangle (0.85,0.45);
			\draw [<->](0.5,0.15) --++ (0,0.3);
			\draw (0.5,0.3) node [anchor=east] {\small $\eps\delta$};
			\draw [fill=gray!15!white](1.15,0.55) rectangle (1.45,0.85);
			\draw (0.7,0.3) node {\small $S_\eps^k$};
			\draw (1.305,0.7) node {\small $S_\eps^{k+e_1}$};

			\draw [<->] (0,0.7) -- (0.15,0.7);
			\draw (0.075,0.7) node [anchor=north] {\small $\eps\alpha$};
			\draw [<->] (0.3,0.85) -- (0.3,1);			
			\draw (0.3,0.925) node [anchor=west] {\small $\eps\alpha$};
 		\end{scope}
	\end{tikzpicture}
	\caption{Illustration of $\hori{\eps}{k}$ in the non-periodic case. The corresponding slope is determined by the side length $\eps\delta$ of $S_\eps^k$ and the parameter $\alpha$ as in \eqref{unif-compact-condition}.}\label{fig:max_slope}
\end{figure}
\end{remark}

The following shows the exact differential inclusion~\eqref{strictinclusion_prop} in Proposition~\ref{prop:rigidity_general}
can be weakened to an approximate one, namely to~\eqref{approximate_inclusion} as below, without changing the result. 
Nevertheless, we have decided to provide the reader with both proofs, as they use different techniques and Section~\ref{sec:regularization} builds on the arguments of Proposition \ref{prop:rigidity_general}.

\begin{proposition}\label{prop:dist-constraint}
Let $p>1$ and suppose that $(u_\eps)_\eps\subset W^{1,p}(\Omega;\R^3)$ is a sequence satisfying  
	\begin{align}\label{approximate_inclusion}
		\int_{\eYrig\cap\Omega} \dist^p(\nabla u_{\eps},\SO(3)) \dd x \leq C\eps^\beta
	\end{align}
	for all $\eps$, with a constant $C>0$ and $\beta>2p$. 
	If $u_\eps\weakly u$ in $W^{1,p}(\Omega;\R^3)$ for $u\in W^{1,p}(\Omega;\R^3)$, then there exist $\Sigma\in W^{1,p}(\omega;\Scal^2)$ and $d\in W^{1,p}(\omega;\R^3)$  such that
	\begin{align}\label{sigma_d_repr8}
		u(x) = x_3\Sigma(x') + d(x'),\quad x\in \Omega.
	\end{align}
\end{proposition}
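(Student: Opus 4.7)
My approach is to adapt the proof of Proposition~\ref{prop:rigidity_general} by substituting the classical rigidity of Reshetnyak on each fiber with the quantitative geometric rigidity of Friesecke--James--M\"uller. Since the fibers $F_\eps^k = \omega_\eps^k\times(0,L)$ are thin cylindrical domains whose FJM constant is not $\eps$-uniform, the first step is to work either on suitable subdivisions into unit-shape subdomains where the constant is scale-invariant, or to invoke a rod-type rigidity estimate directly; either way, one obtains rotations $R_\eps^k \in \SO(3)$ such that
\begin{equation*}
\|\nabla u_\eps - R_\eps^k\|_{L^p(F_\eps^k;\R^{3\times 3})}^p \leq C\eps^{-\gamma}\int_{F_\eps^k} \dist^p(\nabla u_\eps,\SO(3))\dd x
\end{equation*}
for a suitable exponent $\gamma$. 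Summing over $k\in I_\eps$ and using~\eqref{approximate_inclusion} produces a total error of order $\eps^{\beta-\gamma}$, which vanishes precisely because the hypothesis $\beta>2p$ is calibrated to absorb the scaling of the rigidity constant on thin fibers.

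With the $R_\eps^k$ in hand, Step~1 of Proposition~\ref{prop:rigidity_general} transfers with minor modifications: I define $w_\eps(x) = R_\eps^k x + b_\eps^k$ on $P_\eps^k = \eps(k+Y)\times(0,L)$, with $b_\eps^k$ chosen so that $u_\eps - w_\eps$ has zero mean on $F_\eps^k$. Poincar\'e on the fiber, combined with the FJM bound above, controls $\|u_\eps-w_\eps\|_{L^p(F_\eps^k;\R^3)}$; the cross-sectional Poincar\'e inequality used in Proposition~\ref{prop:rigidity_general} then extends to the present setting because its only ingredient is that $u_\eps-w_\eps$ has \emph{small} (rather than vanishing) trace on $\omega_\eps^k$. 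Summation yields $u_\eps - w_\eps \to 0$ in $L^p(U;\R^3)$ on any cylinder $U=U'\times(0,L)$ with $U'\Subset\omega$, from which the decomposition $u = \sigma + \hat b$ with $\partial_3 \hat b = 0$ follows exactly as in the original argument.

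For the Fr\'echet--Kolmogorov estimate on $\Sigma_\eps(x')=\sum_k (R_\eps^k e_3)\,\mathbbm{1}_{\eps(k+Y)}(x')$, I need an approximate version of Lemma~\ref{lem:neighboring_rotations}. Repeating the Jensen chain with the relaxed boundary conditions produces
\begin{equation*}
|(A_2-A_1)e_3|^p \leq C\int_E |\partial_d v|^p \dd x + C\int_{\partial E\cap\{x_1=0\}} |v-w_1|^p \dd\Hcal^2 + C\int_{\partial E\cap\{x_1=L_1\}} |v-w_2|^p \dd\Hcal^2,
\end{equation*}
with constants depending on $E$ and $p$. Applied with $v=u_\eps$ on the connecting rectangles $\hori{\eps}{k}\times(0,L)$ and $\verti{\eps}{k}\times(0,L)$, the first term is handled as in~\eqref{est998}, while the boundary contributions are controlled by trace inequalities together with the FJM estimate from the first step. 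The resulting Fr\'echet--Kolmogorov bound~\eqref{estimate_frechet} acquires only a harmless extra term of order $\eps^{\beta-\gamma'}$ for some $\gamma'<\beta$, so that one still obtains $\Sigma_\eps \to \Sigma$ strongly in $L^p(U';\R^3)$ with $|\Sigma|=1$ and $\Sigma\in W^{1,p}(\omega;\R^3)$. Steps~3--4 of Proposition~\ref{prop:rigidity_general} then carry over verbatim to identify $\partial_3 u = \Sigma$ and produce the representation~\eqref{sigma_d_repr8}.

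The principal obstacle I anticipate is the careful bookkeeping of the $\eps$-scalings throughout: specifically, pinning down the exponent $\gamma$ in the geometric rigidity estimate on thin fibers (typically by subdividing each fiber into cubes of side $\eps$ and patching the local rotations via a telescoping argument), and propagating the small FJM errors through the modified Lemma~\ref{lem:neighboring_rotations} and the translation-difference estimate for $\Sigma_\eps$. Once those quantitative details are in place, the margin $\beta>2p$ is precisely what is needed to make every perturbative term vanish in the limit, and the rest of the argument is a direct transcription of the proof of Proposition~\ref{prop:rigidity_general}.
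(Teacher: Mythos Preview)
Your overall strategy---replace Reshetnyak by Friesecke--James--M\"uller on each fiber, track the $\eps$-scaling of the rigidity constant, and verify that $\beta>2p$ absorbs the losses---is exactly the paper's approach, and the conclusion via Steps~3--4 of Proposition~\ref{prop:rigidity_general} is the same. Two intermediate steps, however, are handled differently, and in both cases the paper's choice is simpler than what you propose.

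First, the FJM estimate is applied not on the full fibers $\omega_\eps^k\times(0,L)$ but on the fixed squares $S_\eps^k\times(0,L)$ from~\eqref{square-inside}. Since these have identical shape up to scaling, the rod-rigidity constant is cleanly $C\eps^{-2}$ with $C$ independent of $k$ and $\eps$, so there is no domain-dependence to track. To pass from $S_\eps^k\times(0,L)$ to the full cell $P_\eps^k$, the paper does not use a ``small-trace Poincar\'e'' as you suggest; instead it covers $P_\eps^k$ by $\lceil\delta^{-2}\rceil$ translates of $S_\eps^k\times(0,L)$ and controls $(u_\eps-w_\eps)(x)-(u_\eps-w_\eps)(x+d_{\eps,n}^k)$ by $|d_{\eps,n}^k|\,\|\nabla'(u_\eps-w_\eps)\|_{L^p}$, which is elementary and avoids traces entirely. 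The upshot is $\|u_\eps-w_\eps\|_{L^p(U)}\leq C(\eps^{\beta/p-2}+\eps)$.

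Second, and this is the more substantial divergence, the compactness of $\Sigma_\eps$ is obtained \emph{without} modifying Lemma~\ref{lem:neighboring_rotations} and without any trace inequalities. The paper observes that the minimization argument inside that lemma already gives the lower bound
\[
\|w_\eps(\cdot+\xi)-w_\eps\|_{L^p(U;\R^3)}^p \;\geq\; C\,\|\Sigma_\eps(\cdot+\xi)-\Sigma_\eps\|_{L^p(U';\R^3)}^p,
\]
and the left-hand side is estimated directly via the triangle inequality by $2\|w_\eps-u_\eps\|_{L^p}+|\xi|\,\|\nabla'u_\eps\|_{L^p}$. This yields the Fr\'echet--Kolmogorov bound immediately from the Step~1 estimate above, with no boundary terms and no perturbative version of Lemma~\ref{lem:neighboring_rotations} needed. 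Your proposed route via relaxed boundary conditions and trace control on the thin faces of $\hori{\eps}{k}\times(0,L)$ could in principle be pushed through, but the trace constants on $\eps$-thin sets introduce additional scalings that you would have to balance carefully; the paper's argument sidesteps all of this.
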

\begin{proof} 
We use the same basic set-up as in the proof of Proposition~\ref{prop:rigidity_general}. Recall in particular that $U=U'\times (0,L)\subset \Omega$ with $U'\Subset \omega$, $P_\eps^k$ from~\eqref{cuboids} for $\eps>0$ and $k\in\Z^2$, and the index set $I_\eps$; furthermore, $\eps$ is supposed to be  small enough such that \eqref{two_boxes} holds.

The rest of the proof is organized in two steps. \medskip
	
	\textit{Step 1: Fiber-wise approximation by rigid body motions.} We apply the quantitative geometric rigidity estimate by Friesecke, James \& M\"ulller (\cite[Theorem 3.1]{FJM02}) to each stiff sub-fiber $S_\eps^k\times (0, L)$ for $k\in I_\eps$ with $S_\eps^k$ as in~\eqref{square-inside}. 
	This yields rotations $R_\eps^k\in\SO(3)$ such that
	\begin{align}\label{rigidity_on_fibers}
		\norm{\nabla u_\eps - R_\eps^k}_{L^p(S_\eps^k\times (0,L);\R^{3\times 3})} \leq C \eps^{-2} \norm{\dist(\nabla u_{\eps},\SO(3))}_{L^p(S_\eps^k\times (0, L))},
	\end{align}
with a constant $C>0$ independent of $k$ and $\eps$. Notice that the scaling factor $\eps^{-2}$ in~\eqref{rigidity_on_fibers} is correlated with the size of the squares $S_\eps^k$. Indeed, this follows from a related scaling analysis for objects that are thin in one dimension as in~\cite[Theorem~6]{FJM06} and~\cite[Theorem~3.2.6]{Chr18}, if we evoke the argument twice in two different directions.  

Let $w_\eps := \sigma_\eps + b_\eps \in L^p(\Omega;\R^3)$ with
\begin{align*}
	\sigma_\eps(x) = \sum_{k\in I_\eps} R_\eps^k x \mathbbm{1}_{P_\eps^k}(x)\quad \text{and}\quad b_\eps(x) = \sum_{k\in I_\eps} b_\eps^k \mathbbm{1}_{P_\eps^k }(x),\quad x\in\Omega,
\end{align*}
where $b_\eps^k = \dashint_{S_\eps^k\times(0, L)} u_\eps(x) - R_\eps^kx \dd x$. This choice of translations implies 
\begin{align*}
\int_{S_\eps^k\times (0,L)} u_\eps - w_\eps \dd x=0,
\end{align*} and hence, enables an application of Poincar\'e's inequality with mean-value condition on the sub-fibers. In doing so, we obtain
	\begin{align}\label{estimate_on_fibers}
		\norm{u_\eps - w_\eps}_{L^p(S_\eps^k\times (0, L);\R^{3})}\leq  C \norm{\nabla u_\eps - R_\eps^k}_{L^p(S_\eps^k\times (0, L);\R^{3\times 3})}
	\end{align}
with a constant $C>0$ that does not depend on $\eps$ and $k$.
	
Next, we utilize the previous estimates on (parts of) the stiff fibers in order to control the difference $u_\eps-w_\eps$ also on the soft compoments. 
To this end, we cover each $P_\eps^k$ with $k\in I_\eps$ by finitely many shifted versions of $S_\eps^k\times (0, L)$; in view of~\eqref{unif-compact-condition} and \eqref{square-inside}, there are translation vectors $d_{\eps,n}^k\in\R^2$ with $n=1,\ldots,N:=\lceil\delta^{-2}\rceil$ satisfying 
\begin{align}\label{covering_soft}
	P_\eps^k \subset \bigcup_{n=1}^N (S_\eps^k+ d_{\eps,n}^k) \times (0, L)
\end{align}
up to a set of zero measure. Then, 
\begin{align*}
	\int_{ (S_\eps^k+ d_{\eps,n}^k) \times (0, L)} |u_\eps - w_\eps|^p \dd x &\leq C\int_{S_\eps^k \times (0, L)} |u_\eps - w_\eps|^p \dd x\\
		&\quad\quad \qquad+C\int_{S_\eps^k \times (0, L)}|(u_\eps-w_\eps)(x) - (u_\eps-w_\eps)(x + d_{\eps,n}^k)|^p \dd x\\
		&\leq C\big(\norm{u_\eps - w_\eps}^p_{L^p(S_\eps^k \times (0, L);\R^3)} + |d_{\eps,n}^k|^p\norm{\nabla' u_\eps - \nabla'w_\eps}^p_{L^p(P_\eps^k;\R^{3\times 2})}\big),
\end{align*}
and consequently, in view of~\eqref{covering_soft}, 
	\begin{align*}
		\int_{P_\eps^k} |u_\eps - w_\eps|^p \dd x \leq C\bigl(\norm{u_\eps - w_\eps}_{L^p(S_\eps^k \times (0, L);\R^3)}^p + \eps^p\norm{\nabla u_{\eps}}_{L^p(P_\eps^k;\R^{3\times 3})}^p + \eps^p|P_\eps^k|\bigr). 
	\end{align*}
	
	Summing this inequality over all $k\in I_\eps$, we conclude together with~\eqref{estimate_on_fibers} and \eqref{rigidity_on_fibers} that
	\begin{align*}
		\int_U |u_\eps - w_\eps|^p \dd x \leq C \bigl(\eps^{-2p}\norm{\dist(\nabla u_\eps,\SO(3))}^p_{L^p(\eYrig\cap \Omega)} + \eps^p\norm{u_\eps}_{W^{1,p}(\Omega;\R^3)}^p + \eps^p |\Omega|\bigr),
	\end{align*}
and finally in light of assumption \eqref{approximate_inclusion}, 
	\begin{align}\label{w_e-u_e_new}
		\norm{u_\eps - w_\eps}_{L^p(U;\R^3)} \leq C(\eps^{\frac{\beta}{p}-2} + \eps),
	\end{align}
	where the constant $C>0$ is independent of $\eps$. 
	
	Since  $\beta>2p$, this shows in particular that the sequences $(w_\eps)_{\eps}$ and $(u_\eps)_\eps$ have an identical weak $L^p$-limit, namely $u$.\medskip 
	
	\textit{Step 2: Compactness result.} We consider the functions
	\begin{align*}
		\Sigma_\eps(x') = \sum_{k\in I_\eps} R_\eps^ke_3 \mathbbm{1}_{\eps(k+Y)}(x'),\quad x'\in \omega,
	\end{align*}
	with $R_\eps^k$ as in Step 1, and observe that $\Sigma_\eps = \partial_3\sigma_\eps= \partial_3 w_\eps$.
	
	Let $\xi\in\R^2$ with $|\xi|< \frac{1}{2}\dist(U',\partial \omega)$.
	In analogy to the optimization argument in the proof of Lemma~\ref{lem:neighboring_rotations},
	\begin{align}\label{123}
		\begin{split} &\norm{w_\eps(\cdot + \xi) - w_\eps}_{L^p(U;\R^3)}^p = \sum_{k\in I_\eps} \int_{P_\eps^k\cap U} |(R_\eps^{k+\floor{\frac{\xi}{\eps}}} - R_\eps ^k)x +  R_\eps^{k+\floor{\frac{\xi}{\eps}}}\xi + b_\eps^{k+\floor{\frac{\xi}{\eps}}} - b_\eps^k|^p \dd x\\
		&\qquad \qquad\geq C\sum_{k\in I_\eps} |R_\eps^{k+\floor{\frac{\xi}{\eps}}}e_3 - R_\eps ^ke_3|^p | P_\eps^k \cap U| \geq C\norm{\Sigma_\eps(\cdot + \xi) - \Sigma_\eps}_{L^p(U';\R^3)}^p,\end{split}
	\end{align}
	where $\floor{\eta} = (\floor{\eta_1},\floor{\eta_2})$ for $\eta\in \R^2$. 
	The left-hand side in~\eqref{123} can be estimated from above by
	 \begin{align*}
	 	\norm{w_\eps(\cdot + \xi)-w_\eps}_{L^p(U;\R^3)} &\leq \norm{w_\eps(\cdot + \xi) - u_\eps(\cdot + \xi)}_{L^p(U;\R^3)} + \norm{u_\eps(\cdot + \xi) - u_\eps}_{L^p(U;\R^3)}\\
	 		&\qquad\qquad\quad + \norm{w_\eps - u_\eps}_{L^p(U;\R^3)}  \\
	 	&\leq 2\norm{w_\eps - u_\eps}_{L^p(U;\R^3)} + |\xi|\norm{\nabla'u_\eps}_{L^p(\Omega;\R^{3\times 2})}.
	 \end{align*}

	 Hence, it follows along with \eqref{w_e-u_e_new} that
	 \begin{align*}
	 	\norm{\Sigma_\eps(\cdot + \xi) - \Sigma_\eps}_{L^p(U;\R^3)} \leq C(|\xi| + \eps^{\frac{\beta}{p}-2} + \eps),
	 \end{align*}
	with $C>0$ independent of $\eps$. 
	The Fr\'echet-Kolmogorov theorem \cite[Theorem 4.16]{Alt16} then implies once again that
	there exists $\Sigma\in L^p(U';\R^3)$ and a non-relabeled subsequence of $(\Sigma_\eps)_\eps$ with
	\begin{align*}
		\Sigma_\eps \to \Sigma \text{ in $L^p(U';\R^3)$ and pointwise a.e. in $U'$,}
	\end{align*}
and consequently, $\Sigma\in\Scal^2$ a.e.~in $U'$.\medskip
	
	 \textit{Step~3: Conclusion.} The statement follows immediately, if we repeat Steps~3 and~4 in the proof of Proposition \ref{prop:rigidity_general}.
\end{proof}

We conclude this section with a brief comment on the validity of Proposition~\ref{prop:dist-constraint} for scaling exponents $\beta\leq 2p$.

\begin{remark}[Optimal scaling exponent]\label{rem:optimality}	Note that the statement of Proposition \ref{prop:dist-constraint} is not true for $\beta\leq p$ in general. If we consider for simplicity a suitable cuboid $\Omega\subset \R^3$, then this is a direct consequence of the related theory of layered composites in~\cite[Section~2 and Corollary~3.8]{ChK20}.

To see this, we first introduce the collection of rigid layers
	\begin{align}\label{layers}
		X_\eps^{\rm rig} = \bigcup_{i\in\Z} \eps(i + [\alpha,1-\alpha))\times \R^2
	\end{align}
	for $\eps>0$ with $\alpha\in (0, \frac{1}{2})$ as in~\eqref{unif-compact-condition} and observe that $\eYrig \subset X_\eps^{\rm rig}$.
	Following~\cite[Example 2.3 and Lemma~2.1]{ChK20}), let $u_\eps:\Omega\to \R^3$ be a Lipschitz function that induces uniform bending of all stiff layers in $\Omega$; naturally, $u_\eps$ also deforms all fibers in $\Omega$ in the same way.
	The elastic energy contribution of $u_\eps$ on the stiff components can be estimated by
	\begin{align*}
		\int_{\eYrig\cap\Omega}\dist^p(\nabla u_\eps,\SO(3)) \dd x \leq \int_{X_\eps^{\rm rig}\cap\Omega}\dist^p(\nabla u_\eps,\SO(3)) \dd x \leq C \eps^p,
	\end{align*}
	where the constant $C>0$ is independent of $\eps$. 
	Since the weak $W^{1,p}$-limit of $(u_\eps)_\eps$ cannot be expressed in the form \eqref{sigma_d_repr8}, it is confirmed that Proposition \ref{prop:dist-constraint} fails for $\beta\leq p$. 
	
	Whether Proposition~\ref{prop:dist-constraint} is valid in the scaling regimes $\beta\in (p,2p]$, though, remains an interesting open problem.
\end{remark}

\section{Proof of sufficiency in Theorem~\ref{theo:characterization}}\label{sec:sufficient_proof}
The main ingredient for the construction of approximating sequences in the proof of Theorem~\ref{theo:characterization} is a suitable approximation of the identity in two dimensions that is constant on the cross section of the fibers. The following lemma can be viewed as a generalization of the one-dimensional result in~\cite[Lemma~4.3]{ChK17}. 

\begin{lemma}[Approximation of the identity]\label{lem:approx-id} 
Let $U\subset \R^2$ be a bounded Lipschitz domain and $\alpha \in (0,\frac{1}{2})$. Further, let $\omega_\eps^k\subset \R^2$ with $\eps>0$ and $k\in \Z^2$ be open domains satisfying~\eqref{unif-compact-condition}, i.e., 
\begin{align*}
	\omega_\eps^k\subset \eps (k + [\alpha, 1-\alpha)^2).
\end{align*}
   
Then, there exists a sequence $(\ffi_{\eps})_{\eps} \subset W^{1, \infty}(U; \R^2)$ with the following properties: 
\begin{itemize}
	\item[$i)$] $\sup_{\eps>0}\| \nabla' \ffi_{\eps}  \|_{L^{\infty}(U; \R^{2\times 2})} < \frac{1}{\alpha}$;\\[-0.3cm]
	\item[$ii)$] $\ffi_{\eps} $ is constant on $\omega_\eps^k\cap U$ for every $k\in \Z^2$ and $\eps>0$;\\[-0.3cm]
	\item[$iii)$] $\varphi_\eps(\eps(k+ [-\alpha, 1 -\alpha)^2))\cap \varphi_\eps(\eps(j+ [-\alpha, 1 -\alpha)^2))=\emptyset$ for all $k, j\in \Z^2$ with $k\neq j$ and every $\eps>0$;\\[-0.3cm]
	\item[$iv)$] $\ffi_\eps\to {\rm id}_{\R^2}$ uniformly in $U$ as $\eps\to 0$.
\end{itemize}
\end{lemma}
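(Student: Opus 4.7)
The plan is to construct $\varphi_\eps$ as a rescaled two-dimensional tensor product of a one-dimensional plateau function, adapting the construction from \cite[Lemma~4.3]{ChK17} from one to two dimensions. Specifically, let $\psi\colon\R\to\R$ be the continuous, nondecreasing, piecewise-affine function determined by
$$\psi(t) = k+\tfrac{1}{2}\quad\text{for } t\in[k+\alpha,\, k+1-\alpha],\ k\in\Z,$$
together with affine interpolation (necessarily of slope $\tfrac{1}{2\alpha}$) on each transition interval $[k-\alpha, k+\alpha]$. The rescaling $\psi_\eps(t) := \eps\,\psi(t/\eps)$ satisfies $\psi_\eps'(t)\in\{0,\tfrac{1}{2\alpha}\}$ for a.e.\ $t\in\R$, and I would define
$$\varphi_\eps(x_1,x_2) := (\psi_\eps(x_1),\psi_\eps(x_2)),\quad x=(x_1,x_2)\in U,$$
with the understanding that this map is initially defined on all of $\R^2$ and then restricted to $U$.

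The verification of the four properties is then essentially calibration and bookkeeping. Property (i) is immediate: $\nabla'\varphi_\eps$ is diagonal with entries in $\{0,\tfrac{1}{2\alpha}\}$, so $\|\nabla'\varphi_\eps\|_{L^\infty(U;\R^{2\times 2})}\leq\tfrac{1}{2\alpha}<\tfrac{1}{\alpha}$. For (ii), if $x\in\omega_\eps^k$, then~\eqref{unif-compact-condition} forces each coordinate $x_i$ to lie in $\eps[k_i+\alpha,\, k_i+1-\alpha)$, on which $\psi_\eps$ attains the constant plateau value $\eps(k_i+\tfrac{1}{2})$; hence $\varphi_\eps$ equals $\bigl(\eps(k_1+\tfrac{1}{2}),\, \eps(k_2+\tfrac{1}{2})\bigr)$ throughout $\omega_\eps^k\cap U$. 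Property (iv) uses the uniform estimate $\sup_{t\in\R}|\psi(t)-t|=\tfrac{1}{2}-\alpha$ (attained at the transition endpoints, as a direct piecewise computation shows), which rescales to $\|\varphi_\eps - \id_{\R^2}\|_{L^\infty(U;\R^2)}\leq \sqrt{2}\,\eps(\tfrac{1}{2}-\alpha)\to 0$. Finally, for (iii), monotonicity of $\psi_\eps$ implies that the half-open one-dimensional cell $[\eps(k_i-\alpha),\,\eps(k_i+1-\alpha))$ is sent into $[\eps(k_i-\tfrac{1}{2}),\,\eps(k_i+\tfrac{1}{2})]$, so for distinct $k\in\Z^2$ the two-dimensional image rectangles are pairwise disjoint up to their shared boundary lines.

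The main obstacle is the simultaneous calibration of the three parameters in the 1D building block. The slope on transitions must equal $\tfrac{1}{2\alpha}$ in order to give the strict bound in (i); the plateaus must be wide enough to contain the fiber cross-sections, so that (ii) is a direct consequence of~\eqref{unif-compact-condition}; and the plateau values must be the cell midpoints $k+\tfrac{1}{2}$ both to make $\psi$ continuous across the transition–plateau interfaces and to minimize the $L^\infty$-deviation from the identity in (iv). Once these choices are fixed, the tensor-product structure makes (iii) and the remaining items transparent, and no further analytic input is needed.
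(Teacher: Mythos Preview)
Your proposal is correct, and the map you construct is identical to the paper's: the function $\varphi$ defined there via the partition $Z=Z_1\cup\cdots\cup Z_4$ is precisely your tensor product $(\psi,\psi)$, as one sees by comparing $\psi$ coordinatewise with the explicit formula~\eqref{def_phi_explicit}. The one genuine difference lies in the verification of~(iv): the paper normalizes $\varphi_\eps$ by an additive constant $d_\eps$ to fix its mean over $U$, and then argues via the Riemann--Lebesgue lemma, Poincar\'e's inequality, and compact Sobolev embedding; your direct estimate $\sup_{t\in\R}|\psi(t)-t|=\tfrac12-\alpha$ yields the uniform convergence in one line and renders the translation $d_\eps$ unnecessary. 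Your observation that (iii) holds only ``up to shared boundary lines'' is also accurate --- the paper's assertion that $\varphi_\eps(\eps(k+Z))=\eps(k+[-\tfrac12,\tfrac12)^2)+d_\eps$ overlooks that the plateau value $k+\tfrac12$ is attained, so the images are in fact closed squares meeting along edges; this is harmless for the downstream application in Proposition~\ref{prop:approx_sobolev}, where only a bounded-overlap property is used.
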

\begin{proof}
We consider the translated unit cell $Z :=Y-\alpha(e_1+e_2)= [-\alpha, 1 -\alpha)^2$ and its partition $Z= \bigcup_{i=1}^4 Z_i$ with
\begin{align*}
	Z_{1} = [-\alpha, \alpha)^2,\  Z_{2} = [-\alpha, \alpha)\times [\alpha, 1-\alpha),\  Z_{3}=[\alpha, 1 -\alpha)^2,\  Z_{4}= [\alpha, 1 -\alpha)\times [-\alpha, \alpha),
\end{align*} 
see Figure \ref{fig:translated_cell}. Define $\ffi: \R^2 \to \R^2$ to be a continuous and piecewise affine function with $Z$-periodic gradients given by 
\begin{align}\label{def_phi}
	\nabla' \ffi = \begin{cases}
							\frac{1}{2\alpha} (e_1 | e_2) & \text{ in } Z_{1},\\
							\frac{1}{2\alpha} (e_1 | 0) & \text{ in } Z_{2},\\
							0 & \text{ in }  Z_{3},\\
							\frac{1}{2\alpha} (0 | e_2) & \text{ in } Z_{4},
					\end{cases}		
\end{align}
that is, up to a translation, 
\begin{align}\label{def_phi_explicit}
	\ffi(k+ z') = k + \begin{cases}
								\frac{1}{2\alpha}z' &\text{ for } z'\in Z_1,\\
								\frac{1}{2\alpha}z_1e_1 + \frac{1}{2}e_2&\text{ for } z'\in Z_2,\\
								\frac{1}{2}(e_1+e_2) &\text{ for } z'\in Z_3,\\
								\frac{1}{2\alpha}z_2e_2 + \frac{1}{2}e_1&\text{ for } z'\in Z_4,
							\end{cases}
\end{align}for $z'=(z_1, z_2)\in Z$ and $k\in \Z^2$.

\begin{figure}[h!]
	\begin{tikzpicture}
		\begin{scope}[scale=3]
			\draw (0,2) rectangle (2,0);
			\draw (1,0) -- (1,2);
			\draw (0,1) -- (2,1);
			
			\draw (0.2,0.8) rectangle (0.8,0.2);
			\draw (1.2,1.8) rectangle (1.8,1.2);
			\draw (0.2,1.8) rectangle (0.8,1.2);
			\draw (1.2,0.8) rectangle (1.8,0.2);
			
			\draw [fill=green!40!white](0.8,1.8) rectangle (1.2,1.2);
			\draw [fill=red!40!white](0.8,1.2) rectangle (1.2,0.8);
			\draw [fill=blue!40!white](1.2,1.2) rectangle (1.8,0.8);
			\draw [fill=orange!40!white](1.2,1.8) rectangle (1.8,1.2);
			
			\draw [fill=gray!30!white,rotate around={45:(0.6,0.4)}](0.5,0.5) ellipse (0.2 and 0.1);
			\draw [fill=gray!30!white,rotate around={-10:(1.5,0.6)}](1.5,0.6) ellipse (0.2 and 0.15);
			\draw [fill=gray!30!white,rotate around={-70:(1.5,1.5)}](1.5,1.5) ellipse (0.24 and 0.12);
			\draw [fill=gray!30!white,rotate around={-30:(0.5,1.35)}](0.5,1.35) ellipse (0.2 and 0.07);
			
			\draw (1,1.5) node {$Z_2$};
			\draw (1,1) node {$Z_1$};
			\draw (1.5,1) node {$Z_4$};
			\draw (1.65,1.7) node {$Z_3$};
			\draw (1.5,1.5) node {$\omega^0_1$};
			
			\draw [<->] (1.5,1.8) --++(0,0.2);
			\draw (1.5,1.9) node [anchor=east] {\footnotesize $\alpha$};
			\draw [<->] (1.8,1.5) --++(0.2,0);
			\draw (1.9,1.5) node [anchor=north] {\footnotesize $\alpha$};
			\draw [<->] (0.8,0.6) --++ (0.4,0);
			\draw (0.925,0.6) node [anchor=north] {\footnotesize $2\alpha$};
			\draw [<->] (0.6,0.8) --++ (0,0.4);
			\draw (0.6,0.94) node [anchor=east] {\footnotesize $2\alpha$};
			\draw [<->] (2.1,0) --++(0,1);
			\draw (2.1,0.5) node [anchor =west] {\small $1$};
			\draw [<->] (1,-0.1) --++(1,0);
			\draw (1.5,-0.1) node [anchor =north] {\small $1$};
			\draw (1.9,1.9) node {$Y$};
		\end{scope}
	\end{tikzpicture}
	\caption{Illustration of the translated unit cell $Z$ and its partition into $Z_1,\ldots,Z_4$; the origin is located in the center of the figure.}\label{fig:translated_cell}
\end{figure} 

Based on the above definitions, we introduce for each $\eps>0$ a Lipschitz function
\begin{align}\label{varphieps}
\varphi_\eps(x') = \eps\varphi\bigl(\tfrac{x'}{\eps}\bigr) + d_\eps
\quad \text{for $x'\in U$,}
\end{align} with the translation $d_\eps\in\R^2$  such that
\begin{align}\label{mean-value-phi_eps}
	\int_{U} \ffi_{\eps}(x') \dd x' = \int_{U} x' \dd x'. 
\end{align}

Since $\nabla' \varphi_\eps=\nabla'  \varphi(\frac{\cdot}{\eps})$ on $U$, we obtain $i)$ immediately from the observation that $|\nabla' \varphi|\leq \frac{\sqrt{2}}{2\alpha}$, and $ii)$ follows since \eqref{unif-compact-condition} translates into
\begin{align*}
	\omega_\eps^k - \eps k \subset \eps Z_3\qquad \text{for all $k\in\Z^2$ and $\eps>0$}
\end{align*}
and $\nabla' \varphi=0$ on $Z_3$. Moreover, it follows from~\eqref{varphieps} and~\eqref{def_phi_explicit} that
\begin{align*}
	\ffi_\eps(\eps(k+Z)) =\eps \varphi(k+Z) +d_\eps =\eps \big( k + \textstyle[-\frac{1}{2},\frac{1}{2})^2\big) +d_\eps,
\end{align*}
for every $\eps>0$ and every $k\in\Z^2$, which implies $iii)$. 

As for the proof of $iv)$, we use the Riemann-Lebesgue lemma on the weak convergence of periodically oscillating sequences in conjunction with \eqref{def_phi} to conclude that
\begin{align}\label{33}
\nabla' \ffi_{\eps} \weaklystar \int_{Z} \nabla'\ffi \dd z' = \frac{1 }{2\alpha} \bigl(|Z_1| (e_1 | e_2)  + |Z_2| (e_1 | 0) + |Z_4| (0 | e_2) \bigr)= {\rm Id}_{\R^{2\times 2}} = \nabla'  {\rm id}_{\R^2}
\end{align}
in $L^\infty(U;  \R^{2 \times 2})$ as $\eps\to 0$. 
In light of \eqref{mean-value-phi_eps}, Poincar\'e's inequality yields for any $q>2$ the existence of a constant $C=C(U, q)$ such that
\begin{align*}
	\left\|  \ffi_{\eps} - \dashint_{ U} x' \dd x' \right\|_{L^q(U; \R^2)} = \left\|  \ffi_{\eps} -\dashint_{U} \ffi_{\eps} \dd x' \right\|_{L^q(U; \R^2)} \le C \| \nabla' \ffi_{\eps} \|_{L^q(U; \R^{2\times 2})} \leq \frac{C}{\alpha},
\end{align*}
with the last estimate due to $i)$. 
Thus, as a uniformly bounded family in $W^{1,q}(U; \R^2)$ satisfying~\eqref{33}, every subsequence of $(\ffi_{\eps})_{\eps}$ has a weakly converging subsequence with limit ${\rm id}_{\R^2} +d$ for some $d \in \R^2$.
By~\eqref{mean-value-phi_eps}, the shift vector $d$ needs to vanish. Thus, we obtain via compact Sobolev embedding that $\ffi_\eps\to {\rm id}_{\R^2}$ uniformly as $\eps\to 0$, as stated.
\end{proof}

First, we present the general idea of how to construct approximating sequences under the simplifying assumption of Lipschitz regularity. This approach serves as a basis for proving the analogous statement for Sobolev functions in Proposition \ref{prop:approx_sobolev}.

\begin{proposition}[Approximation of Lipschitz functions]\label{prop:approx_lipschitz} Let $R\in W^{1,\infty}(\omega; \SO(3))$ and $b\in W^{1,\infty}(\omega; \R^3)$. If $u\in W^{1,\infty}(\Omega;\R^3)$ is given by
	\begin{align*}
		u(x) = R(x')x + b(x'), \quad x\in \Omega,
	\end{align*}
	then there exists a sequence $(u_{\eps})_{\eps} \subset W^{1,\infty}(\Omega; \R^3)$ with 
	\begin{center}
		$\nabla u_{\eps} \in \SO(3)$ a.e. in $\eYrig \cap \Omega$ 
	\end{center} 
	for all $\eps$, such that $u_{\eps} \weaklystar u$ in $W^{1,\infty}(\Omega;\R^3)$ as $\eps\to 0$.
\end{proposition}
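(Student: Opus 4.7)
The idea is to build $u_\eps$ by composing the cross-sectional data $R$ and $b$ of $u$ with the identity-approximating map $\varphi_\eps$ from Lemma~\ref{lem:approx-id}. Since $\varphi_\eps$ is constant on each fiber cross-section $\omega_\eps^k \cap \omega$ (property $ii)$ of Lemma~\ref{lem:approx-id}), the resulting deformation will automatically be a rigid body motion on each fiber.

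First, because $\varphi_\eps$ maps $\omega$ into a slightly larger neighborhood of $\omega$, I would extend $R$ to $\bar R \in W^{1,\infty}(V;\SO(3))$ on some bounded Lipschitz domain $V \supset \omega$ via Lemma~\ref{lem:Sobolev-extension}, and extend $b$ analogously to $\bar b \in W^{1,\infty}(V;\R^3)$ by standard Sobolev extension. I then apply Lemma~\ref{lem:approx-id} with $U = \omega$ to obtain $(\varphi_\eps)_\eps$; since $\varphi_\eps \to \id_{\R^2}$ uniformly, we have $\varphi_\eps(\omega) \Subset V$ for $\eps$ small enough, and I define
\begin{equation*}
u_\eps(x) := \bar R(\varphi_\eps(x'))\, x + \bar b(\varphi_\eps(x')), \qquad x = (x', x_3) \in \Omega.
\end{equation*}

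Three items need to be verified. For the uniform Lipschitz bound, a direct computation gives $\partial_3 u_\eps = \bar R(\varphi_\eps(x'))e_3$ and $\partial_i u_\eps = (D\bar R)(\varphi_\eps(x'))\partial_i \varphi_\eps(x')\,x + \bar R(\varphi_\eps(x'))e_i + (D\bar b)(\varphi_\eps(x'))\partial_i \varphi_\eps(x')$ for $i=1,2$, which is uniformly bounded thanks to property $i)$ of Lemma~\ref{lem:approx-id} and the Lipschitz regularity of $\bar R$ and $\bar b$. For the rigidity inclusion, property $ii)$ says that on each $\omega_\eps^k \cap \omega$ the map $\varphi_\eps$ takes a constant value $c_\eps^k \in V$, whence on the fiber $(\omega_\eps^k \cap \omega)\times(0,L)$ the map $u_\eps$ reduces to the affine $x\mapsto \bar R(c_\eps^k)x + \bar b(c_\eps^k)$, so $\nabla u_\eps = \bar R(c_\eps^k)\in\SO(3)$ a.e.~on $\eYrig \cap \Omega$.

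Finally, by property $iv)$ of Lemma~\ref{lem:approx-id} and continuity of $\bar R$ and $\bar b$, the sequence $u_\eps \to u$ uniformly on $\Omega$; together with the uniform $W^{1,\infty}$ bound this upgrades to $u_\eps \weaklystar u$ in $W^{1,\infty}(\Omega;\R^3)$ (weak-$*$ limits are determined by distributional limits for bounded sequences). The argument presents no serious obstacle, since the delicate oscillation analysis has been absorbed into Lemma~\ref{lem:approx-id}; the only point that requires attention is the extension step for $R$, which ensures that the composition $\bar R \circ \varphi_\eps$ is well defined on all of $\omega$. This construction also paves the way for the Sobolev version in Proposition~\ref{prop:approx_sobolev}, where the lifting result of Lemma~\ref{lem:lifting} and property $iii)$ of Lemma~\ref{lem:approx-id} will enter to handle the absence of a direct pointwise definition.
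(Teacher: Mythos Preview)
Your proof is correct and follows essentially the same approach as the paper: extend $R$ and $b$ to a larger domain $V\Supset\omega$, compose with the identity-approximation $\varphi_\eps$ from Lemma~\ref{lem:approx-id}, and verify the rigidity inclusion via property~$ii)$, the uniform Lipschitz bound via property~$i)$, and the convergence via property~$iv)$. One small aside: your forward-looking remark about Proposition~\ref{prop:approx_sobolev} is slightly off---the lifting result of Lemma~\ref{lem:lifting} is not used there (it enters only in the proof of Theorem~\ref{theo:characterization}), whereas the new ingredients are the smooth density Lemma~\ref{lem:approx_SO(3)} and a translation-averaging trick together with property~$iii)$.
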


\begin{proof}
We start by extending $R\in W^{1,\infty}(\omega; \SO(3))$ according to Lemma \ref{lem:Sobolev-extension}, that is, we consider $R\in W^{1,\infty}(V; SO(3))$, where $V\subset \R^2$ is an open set with $\omega\Subset V$. 
Moreover, one can also find a Lipschitz extension of $b$ in $W^{1, \infty}(V;\R^3)$, still called $b$, by standard Sobolev theory, see e.g., \cite[Theorem 1, Section 3.1]{EvG15}. 

Further, let $(\ffi_{\eps})_{\eps}$ be the sequence of Lipschitz functions that results from approximating the identity on $\R^2$ according to the construction in Lemma~\ref{lem:approx-id} with $U=\omega$. Since $(\ffi_\eps)_\eps$ converges uniformly to $\mathrm{id}_{\R^2}$ on $\omega$ as $\eps\to 0$, we may assume that $\ffi_\eps(\omega)\subset V$ for all $\eps$ sufficiently small.
For such $\eps>0$, we can therefore define
\begin{align}\label{approx_lipschitz}
	u_{\eps}(x) = R(\varphi_{\eps}(x'))x + b(\varphi_{\eps}(x')), \quad x\in \Omega.
\end{align}
Since the composition of two Lipschitz maps is again Lipschitz, one has that $u_{\eps} \in W^{1,\infty}(\Omega; \R^3)$. 
By Lemma~\ref{lem:approx-id}\,$ii)$, $\ffi_{\eps}$ is constant on all the connected components of $(\eYrig)'\cap \omega= \bigcup_{k\in \Z^2}w_\eps^k\cap \omega$, which yields
\begin{align*}
	\nabla u_\eps = R\circ \varphi_\eps \in \SO(3) \text{ a.e.~in $\eYrig\cap \Omega$.}
\end{align*}

It remains to prove that $u_{\eps}\weaklystar u$ in $W^{1,\infty}(\Omega; \R^3)$. 
Indeed, the uniform convergence of $(u_\eps)_\eps$ follows from the Lipschitz continuity of $R$ and $b$ along with the uniform convergence of $(\ffi_\eps)_\eps$ to the identity map on $\R^2$, precisely,
\begin{align*}
	\sup_{x\in\Omega}|u_\eps(x) - &u(x)| = \sup_{x\in\Omega}|(R(\ffi_{\eps}(x')) - R(x'))x + b(\ffi_{\eps}(x')) - b(x')| \\
	&\leq C\sup_{x'\in\omega}\big(|R(\ffi_{\eps}(x')) - R(x')| + |b(\ffi_{\eps}(x')) - b(x')|\big) \\ & \leq C\sup_{x'\in\omega}|\ffi_{\eps}(x') -x'| \to 0 \quad \text{as $\eps\to 0$}.
\end{align*}
Finally, it needs to be shown that $(\nabla u_\eps)_\eps$ is uniformly essentially bounded. We use the chain rule and exploit Lemma~\ref{lem:approx-id}\,$i)$ to obtain the estimate 
\begin{align*}
	\norm{\nabla u_\eps}_{L^\infty(\Omega;\R^3)} &= \essup_{x\in\Omega} \bigl|\nabla R(\ffi_\eps(x'))\nabla \ffi_\eps(x') x + R(\ffi_\eps(x')) + \nabla b(\ffi_{\eps}(x')) \nabla \ffi_\eps(x')\bigr|\\
	&\leq C\essup_{x\in\Omega}\big(1+ |\nabla R(\ffi_\eps(x'))| + |\nabla b(\ffi_\eps(x'))|\big) \\
		&\leq C\big(1+ \norm{R}_{W^{1,\infty}(V;\R^{3\times 3})} + \norm{b}_{W^{1,\infty}(V;\R^3)}\big),
\end{align*}
which concludes the proof.
\end{proof}

Now we can address the analogy of Proposition~\ref{prop:approx_lipschitz} in the setting of $W^{1,p}$-functions.   
The proof strategy is similar but requires a refined reasoning, since the composition of Sobolev with Lipschitz functions may not be Sobolev anymore, see the work by~Conti \& Dolzmann~\cite[Appendix A]{CoD15}.

\begin{proposition}[Approximation of Sobolev functions]\label{prop:approx_sobolev} 
	Let $R\in W^{1,p}(\omega; \SO(3))$ and $b\in W^{1,p}(\omega; \R^3)$ with $p\geq 2$. If $u\in W^{1,p}(\Omega;\R^3)$ is given by
	\begin{align}\label{u_Rb}
		u(x) = R(x')x + b(x'), \quad x\in\Omega,
	\end{align}
	then there exists a sequence $(u_{\eps})_{\eps} \subset W^{1,p}(\Omega; \R^3)$ with 
	\begin{align*}
		\nabla u_{\eps} \in \SO(3) \text{ a.e.~in }\eYrig \cap \Omega
	\end{align*} 
	for every $\eps$, such that $u_{\eps} \weakly u$ in $W^{1, p}(\Omega;\R^3)$ as $\eps\to 0$.
\end{proposition}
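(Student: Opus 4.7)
The plan is to smooth $R$ and $b$, apply Proposition~\ref{prop:approx_lipschitz} to the regularized data, and extract a diagonal sequence. First I would extend $R$ and $b$ to $\bar R \in W^{1,p}(V; \SO(3))$ and $\bar b \in W^{1,p}(V; \R^3)$ on an open neighborhood $V$ of $\omega$ with $\omega \Subset V$, via Lemma~\ref{lem:Sobolev-extension} and standard Sobolev extension respectively. With the help of Lemma~\ref{lem:approx_SO(3)}, choose $R_n \in C^\infty(V; \SO(3))$ with $R_n \to \bar R$ in $W^{1,p}(V; \R^{3\times 3})$, and by mollification $b_n \in C^\infty(V; \R^3)$ with $b_n \to \bar b$ in $W^{1,p}(V; \R^3)$. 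Setting $u_n(x) = R_n(x')x + b_n(x')$, the product rule directly gives $u_n \to u$ in $W^{1,p}(\Omega; \R^3)$.

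Since $R_n, b_n$ are smooth and hence Lipschitz on $\bar\omega$, Proposition~\ref{prop:approx_lipschitz} applies to each $u_n$ and supplies a sequence $(u_{n,\eps})_\eps \subset W^{1,\infty}(\Omega; \R^3)$ with $\nabla u_{n,\eps} \in \SO(3)$ a.e.~on $\eYrig \cap \Omega$ and $u_{n,\eps} \weaklystar u_n$ in $W^{1,\infty}(\Omega; \R^3)$ as $\eps \to 0$. Explicitly, $u_{n,\eps}(x) = R_n(\varphi_\eps(x'))x + b_n(\varphi_\eps(x'))$, where $(\varphi_\eps)_\eps$ is the sequence from Lemma~\ref{lem:approx-id} with $U = \omega$, and $\varphi_\eps(\omega) \subset V$ for $\eps$ small enough by the uniform convergence $\varphi_\eps \to \id_{\R^2}$.

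The main obstacle is to establish an $L^p$-bound on $\nabla u_{n,\eps}$ that is uniform in both $n$ and $\eps$; this is where a direct application to the unsmoothed data would break down, since the composition of a generic $W^{1,p}$-Sobolev map with a Lipschitz map need not be Sobolev (cf.\,the Conti--Dolzmann issue). The initial smoothing circumvents this: differentiating explicitly and invoking property $i)$ of Lemma~\ref{lem:approx-id} yields
\begin{align*}
\|\nabla u_{n,\eps}\|_{L^p(\Omega;\R^{3\times 3})}^p \le C\bigl(1 + \|\nabla R_n \circ \varphi_\eps\|_{L^p(\omega;\R^{3\times 3 \times 2})}^p + \|\nabla b_n \circ \varphi_\eps\|_{L^p(\omega;\R^{3\times 2})}^p\bigr),
\end{align*}
with $C$ depending on $\alpha$, $p$ and $|\Omega|$. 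For each fixed $n$, the smoothness (hence boundedness and continuity) of $\nabla R_n$ on $\bar\omega$ together with $\varphi_\eps \to \id_{\R^2}$ uniformly (Lemma~\ref{lem:approx-id}\,$iv)$) yields, by dominated convergence, $\|\nabla R_n \circ \varphi_\eps\|_{L^p(\omega)} \to \|\nabla R_n\|_{L^p(\omega)}$ as $\eps \to 0$, and analogously for $b_n$.

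Finally, for each $n$ pick $\eps_n > 0$ with $\eps_n \to 0$ such that
\begin{align*}
\|\nabla R_n \circ \varphi_{\eps_n}\|_{L^p(\omega)} \le 2\|\nabla R_n\|_{L^p(\omega)},\qquad \|\nabla b_n \circ \varphi_{\eps_n}\|_{L^p(\omega)} \le 2\|\nabla b_n\|_{L^p(\omega)}.
\end{align*}
Since $(R_n)_n$ and $(b_n)_n$ are bounded in $W^{1,p}(V)$, this gives $\sup_n\|\nabla u_{n,\eps_n}\|_{L^p(\Omega;\R^{3\times 3})} < \infty$. Strong $L^p$-convergence $u_{n,\eps_n} \to u$ follows via the triangle inequality from $\|R_n \circ \varphi_{\eps_n} - R_n\|_{L^p(\omega)} \to 0$ (smoothness of $R_n$ combined with uniform convergence of $\varphi_{\eps_n}$) and $\|R_n - R\|_{L^p(\omega)} \to 0$, with analogous reasoning for $b_n$. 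Combining the uniform $W^{1,p}$-bound with this strong $L^p$-convergence yields $u_{n,\eps_n} \weakly u$ in $W^{1,p}(\Omega; \R^3)$; reindexing by $\eps := \eps_n$ produces the required sequence.
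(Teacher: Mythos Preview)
Your overall strategy---smooth, apply Proposition~\ref{prop:approx_lipschitz}, diagonalize---is sound and leads to a correct proof, but one step is not justified as written. After fixing $\eps_n$ solely from the gradient condition, you assert that $\|R_n\circ\varphi_{\eps_n}-R_n\|_{L^p(\omega)}\to 0$ by ``smoothness of $R_n$ combined with uniform convergence of $\varphi_{\eps_n}$''. This does not follow: the Lipschitz constant of $R_n$ may blow up with $n$, so $\|\varphi_{\eps_n}-\id\|_{L^\infty}\to 0$ alone does not force the composition difference to vanish (for $p>2$ a uniform H\"older bound via Sobolev embedding would rescue this, but the statement includes $p=2$). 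The fix is immediate and fits your scheme perfectly: when choosing $\eps_n$, impose in addition $\|R_n\circ\varphi_{\eps_n}-R_n\|_{L^p(\omega)}+\|b_n\circ\varphi_{\eps_n}-b_n\|_{L^p(\omega)}\leq 1/n$, which is possible because for each fixed $n$ these quantities tend to zero as $\eps\to 0$ by dominated convergence. With this amendment your argument goes through.

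Your route differs genuinely from the paper's. The paper does not rely on the convergence $\|\nabla R_\eta\circ\varphi_\eps\|_{L^p}\to\|\nabla R_\eta\|_{L^p}$ for fixed $\eta$; instead it introduces small translations $a\in B(0,\eps)$, defines $u^a_{\eta,\eps}(x)=R_\eta(\varphi_\eps(x')+a)x+b_\eta(\varphi_\eps(x')+a)$, and averages over $a$. Via Fubini and a change of variables in $a$ (circumventing the non-invertibility of $\varphi_\eps$), this yields $\dashint_{B(0,\eps)}\|\nabla u^a_{\eta,\eps}\|_{L^p(\Omega)}^p\,da\leq C$ with $C$ independent of both $\eta$ and $\eps$; one then selects a good $a_\eps$. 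The paper's translation-averaging trick (after Conti--Dolzmann) gives a bound that is uniform in both parameters simultaneously, at the cost of a more technical argument. Your approach is more elementary: it exploits the freedom to let $\eps_n$ depend on $n$ and avoids the averaging altogether, which suffices because only a single diagonal sequence is needed.
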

\begin{proof}	
Let $u \in W^{1,p}(\Omega;\R^3)$ satisfy~\eqref{u_Rb}. The task is to imitate the construction in~\eqref{approx_lipschitz}, while making sure that the approximating sequence actually lies in $W^{1,p}(\Omega;\R^3)$.

To this end, we first approximate $R$ and $b$ in $W^{1,p}$ by smooth functions $R_\eta$ and $b_\eta$ with $\eta>0$ small. However, the natural approach of choosing $u_{\eta, \eps}$ according to~\eqref{approx_lipschitz} for each $\eta$ and concluding by a diagonalization argument, is not easily accessible. This is because the required uniform $L^p$-bounds on $\nabla u_{\eta, \eps}$ are not trivial to obtain due to the lack of invertibility of the approximation of the identity $\varphi_\eps$ (cf.~Lemma~\ref{lem:approx-id}), which prevents a classical change of variables. 
To overcome this issue, we proceed similarly to~\cite[Lemma A.1]{CoD15} and refine the definition of $u_{\eta,\eps}$ by introducing small translations of the independent variables.
	
We detail these arguments in the following three steps.  \medskip

\textit{Step 1: Extension and approximation.}
In analogy to Proposition~\ref{prop:approx_lipschitz}, we first extend $R$ and $b$ to $R\in W^{1,p}(V;\SO(3))$, $b\in W^{1,p}(V;\R^3)$ with an open set $V\subset \R^2$ such that $\omega\Subset V$.	
Moreover, let $U\subset \R^2$ be a bounded Lipschitz domain with $\omega\Subset U\Subset V$ and $(\varphi_\eps)_\eps$ the approximation of the identity from Lemma~\ref{lem:approx-id}. Suppose in the following that $\eps>0$ is so small that $\ffi_\eps(U)+B(0,\eps)\subset V$.
	
According to Lemma \ref{lem:approx_SO(3)} and standard Sobolev theory, there are approximating sequences $(R_\eta)_\eta \subset C^\infty(V;\SO(3))$ and $(b_\eta)_\eta\subset C^\infty(V;\R^3)$ such that
	\begin{align}\label{approx_R,b}
		R_\eta \to R \text{ in } W^{1,p}(V;\SO(3))\quad \text{and}\quad b_\eta \to b \text{ in } W^{1,p}(V;\R^3),
	\end{align}
	respectively; 
notice that we may assume without loss of generality that $(R_\eta)_\eta \subset C^\infty(\overline{V};\SO(3))$ and $(b_\eta)_\eta\subset C^\infty(\overline{V};\R^3)$, since otherwise, we introduce an intermediate set $\tilde V$ with $U\Subset \tilde V\Subset V$ and choose $\eps$ even smaller to guarantee that $\ffi_\eps(U)+B(0,\eps)\subset \tilde V$.\medskip

	\textit{Step 2: Construction of the approximating sequence.}
	Similarly to \eqref{approx_lipschitz}, we define for $\eta>0$, $\eps>0$ sufficiently small, and $a\in B(0,\eps)\subset \R^2$ the Lipschitz functions
	\begin{align}\label{ua}
		u^a_{\eta,\eps}(x) = R_\eta(\ffi_\eps(x') + a)x + b_\eta(\ffi_\eps(x')+a),\quad x\in U\times (0,L).
	\end{align}
Then, 
	\begin{align}\label{stiff_fibers}
		\nabla u_{\eta,\eps}^a \in \SO(3)\text{ a.e.~in } \eYrig\cap U,
	\end{align}
	since $\ffi_\eps$ is constant on $\omega_\eps^k\cap U$ for each $k\in \Z^2$ according to 
	Lemma~\ref{lem:approx-id}\,$ii)$.
	Further, we infer from Lemma~\ref{lem:approx-id}\,$iv)$ that any sequence $(u_{\eta,\eps}^{a_\eps})_\eps$ with $a_\eps\in B(0,\eps)$ converges uniformly for $\eps\to 0$ to a limit function $u_\eta$ given by 
	\begin{align*}
		u_\eta(x) = R_\eta(x')x + b_\eta(x'),\quad  x\in U\times (0, L); 
	\end{align*}
 	in particular, 
	\begin{align*}
		u_{\eta,\eps}^{a_\eps} \to u_\eta\ \text{ in $L^p(U\times (0,L);\R^3)$} \quad \text{as $\eps\to 0$. }
	\end{align*}	
	Since also $u_\eta\to u$ in $W^{1,p}(U\times (0,L);\R^3)$ for $\eta\to 0$ due to~\eqref{approx_R,b}, a diagonalization argument in the sense of Attouch provides a diagonal sequence $(u_\eps)_\eps = (u_{\eta(\eps),\eps}^{a_\eps})_\eps$ such that
	\begin{align*}
		u_\eps \to u\text{ in }\ L^p(U\times (0,L);\R^3). 
	\end{align*}
	In view of \eqref{stiff_fibers}, we know that $\nabla u_\eps \in \SO(3)$ almost everywhere in $\eYrig\cap U$. \medskip

	\textit{Step 3: Choice of suitable translations.} It remains to show that the construction of sequences $(u_\eps)_\eps$ in Step~2 gives rise to a sequence that converges weakly in $W^{1,p}(U;\R^3)$.
This follows immediately, if we can select translations $a_\eps\in B(0,\eps)\subset \R^2$ such that
	\begin{align}\label{444}
		\norm{\nabla u_{\eta,\eps}^{a_\eps}}_{L^p(\Omega;\R^{3\times 3})} \leq C
	\end{align}
	with a constant $C>0$ independent of $\eta$ and $\eps$.
	
To this end, consider the index set $I_\eps = \{k\in \Z^2: |\eps(k+Z)\cap \omega|>0\}$, recalling $\Omega=\omega\times (0, L)$ and the notation $Z=[-\alpha, 1-\alpha)^2$ from the proof of Lemma~\ref{lem:approx-id}, and observe that 
	\begin{align}\label{covering}
		\omega \subset \bigcup_{k\in I_\eps} \eps(k+Z) \subset U
	\end{align}
	for $\eps$ sufficiently small. For such $\eps$, $k\in I_\eps$ and $\eta>0$, we
 define the function
	\begin{align*}
		h_{\eta,\eps}^{k}: B(0,\eps)\to [0,\infty),\: a \mapsto \norm{\nabla u_{\eta,\eps}^{a}}_{L^p(\eps(k+Z)\times(0,L);\R^{3\times 3})}^p,
	\end{align*}
	with $u_{\eta, \eps}^a$ as in~\eqref{ua}. For the mean value of $h_{\eta, \eps}^k$, we obtain from the chain and product rule, together with Lemma~\ref{lem:approx-id}\,$i)$, Fubini's theorem, and a change of variables that
	\begin{align}\label{344}
	\begin{split}
		\dashint_{B(0,\eps)} h_{\eta,\eps}^k(a) \dd a &= \dashint_{B(0,\eps)}\int_0^L\int_{\eps(k+Z)} |\nabla u_{\eta,\eps}^a|^p\dd x'\dd x_3\dd a\\
		&\leq C \int_{\eps(k+Z)}\dashint_{B(0,\eps)} |\nabla' R_\eta(\ffi_\eps(x') +a)|^p + |\nabla' b_\eta(\ffi_\eps(x') +a)|^p + 1 \dd a \dd x'\\
		&\leq C\frac{|\eps(k+Z)|}{|B(0,\eps)|}\int_{B(0,\eps) + \ffi_\eps(\eps(k+Z))} |\nabla' R_\eta(a)|^p + |\nabla' b_\eta(a)|^p +1 \dd a\\
		&\leq C\int_{B(0,\eps) + \ffi_\eps(\eps(k+Z))} |\nabla' R_\eta(a)|^p + |\nabla' b_\eta(a)|^p +1 \dd a.
		\end{split}
	\end{align}
	where $C>0$ is a constant that depends only on $p$ and $\alpha$.
	
	By Lemma~\ref{lem:approx-id}\,$iii)$, the sets $\ffi_\eps(\eps(k+Z))$ with different $k\in \Z^2$ are disjoint for every $\eps>0$.
	Therefore,  every $x'\in \ffi_\eps(U) + B(0,\eps)$ is contained in at most $9$ sets of the form $\ffi_\eps(\eps(k+Z)) + B(0,\eps)$ with $k\in \Z^2$, see also Figure \ref{fig:overlapping}. 	
	\begin{figure}[h!]
		\centering
		\begin{tikzpicture}
			\begin{scope}[scale=1.3]
				\draw [fill=orange!10!white] (0,1) --++ (0,1) -- ([shift={(180:1)}]1,2) arc (180:90:1) --++ (1,0) -- ([shift={(90:1)}]2,2) arc (90:0:1) --++(0,-1) -- ([shift={(0:1)}]2,1) arc (0:-90:1) --++ (-1,0) -- ([shift={(-90:1)}]1,1) arc (-90:-180:1);
			
				\foreach \x in {0,...,3}
					{\draw (0,\x) -- (3,\x);
					 \draw [dashed] (-0.5,\x) -- (0,\x);
					 \draw [dashed] (3,\x) -- (3.5,\x);}
				\foreach \x in {0,...,3}
					{\draw (\x,0) -- (\x,3);
					 \draw [dashed] (\x,-0.5) -- (\x,0);
					 \draw [dashed] (\x,3) -- (\x,3.5);}
					 
				\fill [red!40!white] (1,1)--(2,1)--(2,2)--(1,2)--cycle;
				\draw [green!70!black,ultra thick] (1,2)--(2,2);
				\draw [blue!60!white,ultra thick] (2,1)--(2,2);
				\fill (2,2) [orange!70!white] circle (1.2pt);
				\draw (1.5,0) node [anchor=north] {\small $\ffi_\eps(\eps(k+Z)) + B(0,\eps)$};
				\draw (1.45,1) node [anchor=north] {\small $\ffi_\eps(\eps(k+Z))$};
				
				\draw (5,2.25) node [red!70!white] {\small $\ffi_\eps(\eps(k+Z_1))$};
				\draw (5,1.75) node [green!60!black] {\small $\ffi_\eps(\eps(k+Z_2))$};
				\draw (5,1.25) node [blue!70!white] {\small $\ffi_\eps(\eps(k+Z_3))$};
				\draw (5,0.75) node [orange!90!white] {\small $\ffi_\eps(\eps(k+Z_4))$};
			\end{scope}
		\end{tikzpicture}
		\caption{Illustration of $\ffi_\eps(\eps(k+Z))$, its components $\ffi_\eps(\eps(k+Z_k))$ for $k=1,\ldots,4$, as well as its enlarged version $\ffi_\eps(\eps(k+Z))+B(0,\eps)$ for some $k\in \Z^2$ and $\eps>0$.}\label{fig:overlapping}
	\end{figure}		
	Summing over all $k\in I_\eps$ in~\eqref{344} then implies in view of \eqref{covering} and the choice of $V$ that 
	\begin{align*}
		\dashint_{B(0,\eps)}\norm{\nabla u_{\eta,\eps}^a}_{L^p(\Omega;\R^3)}^p \dd a &\leq C\int_{V} |\nabla' R_\eta(a)|^p + |\nabla' b_\eta(a)|^p + 1 \dd a \\
	&\leq C\bigl(\norm{R_\eta}^p_{W^{1,p}(V;\R^{3\times 3})} + \norm{b_\eta}^p_{W^{1,p}(V;\R^3)}+1\bigr),
	\end{align*}
	and we conclude, due to \eqref{approx_R,b}, that
	\begin{align}\label{bounded_average}
		\dashint_{B(0,\eps)}\norm{\nabla u_{\eta,\eps}^a}_{L^p(\Omega;\R^3)}^p \dd a &\leq C
	\end{align}
	with a constant $C>0$ independent of $\eta$ and $\eps$.
	Hence, there exists for each $\eps$ a subset $E_\eps\subset B(0,\eps)$ of positive measure such that
	\begin{align*}
		\norm{\nabla u_{\eta,\eps}^{a}}_{L^p(\Omega;\R^3)}^p &\leq C
	\end{align*}
	for all $a\in E_\eps$ with the same constant as in \eqref{bounded_average}, so that choosing any $a_\eps\in E_\eps\subset B(0,\eps)$ yields the desired bound~\eqref{444}. This finishes the proof. 
\end{proof}

The proof of Theorem \ref{theo:characterization} is essentially a consequence of Propositions~\ref{prop:approx_sobolev} and~\ref{prop:rigidity_general}. The gap in the different representations can be closed by a lifting argument.  

\begin{proof}[Proof of Theorem~\ref{theo:characterization}]\label{rem:representation}
	It follows from Proposition \ref{prop:rigidity_general} that any $u\in \Acal_0$ (see~\eqref{A_0}) can be represented as
	\begin{align}\label{Sigma_d_repr}
		u(x) = x_3\Sigma(x') + d(x'), \quad x\in\Omega,
	\end{align}
	with $\Sigma\in W^{1,p}(\omega;\Scal^2)$ and $d\in W^{1,p}(\omega;\R^3)$.
	On the other hand, Proposition~\ref{prop:approx_sobolev} yields that any function
	\begin{align}\label{R_b_repr}
		u(x) = R(x')x + b(x'), \quad  x\in\Omega,
	\end{align}
	with $R\in W^{1,p}(\omega;\SO(3))$ and $b\in W^{1,p}(\omega;\R^3)$ lies in $\Acal_0$. If the set of all functions of the form~\eqref{Sigma_d_repr} and~\eqref{R_b_repr} are denoted by $\Acal_{\Sigma}$ and $\Acal_{R}$, respectively, we have
	\begin{align*}
	\Acal_R \subset \Acal_0\subset \Acal_\Sigma.
	\end{align*}

To conclude the proof, it suffices to show that the sets $\Acal_R$ and $\Acal_\Sigma$ coincide under the assumption (H). While $\Acal_R\subset \Acal_\Sigma$ is immediately clear, the converse inclusion is more delicate as it requires the construction of an $\SO(3)$-valued Sobolev function whose third column coincides with $\Sigma$. 
By the lifting result in Lemma \ref{lem:lifting}, we find for any $u$ as in~\eqref{Sigma_d_repr} a function $R\in W^{1,p}(\omega;\SO(3))$ such that $Re_3=\Sigma$, and hence, setting
	\begin{align*}
		b(x') = d(x') - x_1R(x')e_1 - x_2R(x')e_2, \quad x'\in\omega,
	\end{align*}
	implies~\eqref{R_b_repr}. 	
\end{proof}

Even though the representation of functions in the form~\eqref{R_b_repr} is essential for our construction of an approximating sequence in Propositions~\ref{prop:approx_lipschitz} and~\ref{prop:approx_sobolev}, we point out that the choice of $R$ is not unique, cf.~Remark~\ref{rem:lifting_formula}. 
More intuitive from a geometric point of view is~\eqref{Sigma_d_repr}; indeed, one can describe the image of $\Omega$ under $u$ as the image of the cross section $\omega$ under the map $d$, fattened linearly in $x_3$ in the direction of the vector field $\Sigma$. 
A few illustrative examples are presented in the next section.

\section{Examples of effective deformations}\label{sec:examples}
Before we focus on explicit examples of limit functions as characterized in Theorem~\ref{theo:characterization}, which  describe the macroscopically attainable deformations with rigid fiber-reinforcments, let us briefly address the issue of incompressibility, or in other words, local volume preservation, of such maps. 

Throughout this section,we  consider $u\in \Acal_0\subset W^{1, p}(\Omega;\R^3)$ with $p>2$ (see \eqref{A_0})
of the form 
\begin{align}\label{555}
	u(x) = x_3\Sigma(x') + d(x'),\quad x\in\Omega,
\end{align}
with given $\Sigma\in W^{1,p}(\omega;\Scal^2)$ and $d\in W^{1,p}(\omega;\R^3)$, cf.~also Remark \ref{rem:representation}.

The next lemma gives a necessary condition for the incompressibility of such deformations.
\begin{lemma}\label{lem:parallel}
Let $u$ as in~\eqref{555} be incompressible, i.e., $\det \nabla u = 1$ a.e.~in $\Omega$. Then,
\begin{align}\label{parallel}
	\partial_1 \Sigma \parallel \partial_2 \Sigma\text{ a.e.~in } \omega.
\end{align}
\end{lemma}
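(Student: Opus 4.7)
The plan is to compute $\det\nabla u$ explicitly, expand it as a polynomial in $x_3$ whose coefficients depend only on $x'\in\omega$, and then exploit that incompressibility forces the coefficient of the highest power of $x_3$ to vanish. The $\Scal^2$-constraint on $\Sigma$ will then translate the vanishing of this coefficient into the desired parallelism.

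More precisely, from \eqref{555} I would first read off the gradient column-wise,
\begin{align*}
	\nabla u = \bigl( x_3 \partial_1\Sigma + \partial_1 d \,\big|\, x_3 \partial_2\Sigma + \partial_2 d \,\big|\, \Sigma \bigr) \quad \text{a.e.~in }\Omega,
\end{align*}
and then use multilinearity of the determinant in the first two columns to obtain the expansion
\begin{align*}
	\det\nabla u = x_3^2\, \det(\partial_1\Sigma\,|\,\partial_2\Sigma\,|\,\Sigma) + x_3\, q(x') + \det(\partial_1 d\,|\, \partial_2 d\,|\,\Sigma),
\end{align*}
with some $q \in L^{p/2}(\omega)$ collecting the two mixed triple products. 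Since all coefficients are integrable over $\omega$ and independent of $x_3$, the identity $\det\nabla u=1$ a.e.~in $\Omega=\omega\times(0,L)$ implies by Fubini's theorem that, for a.e.~$x'\in\omega$, the polynomial $x_3\mapsto \det\nabla u(x',x_3)$ equals $1$ for a.e.~$x_3\in(0,L)$, hence identically. Comparing coefficients yields in particular
\begin{align*}
	\det\bigl(\partial_1\Sigma(x')\,|\,\partial_2\Sigma(x')\,|\,\Sigma(x')\bigr)=0 \quad \text{for a.e.~$x'\in\omega$.}
\end{align*}

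The last step is purely algebraic and relies on $\Sigma\in\Scal^2$. Differentiating the pointwise identity $|\Sigma|^2=1$ gives $\partial_i\Sigma\cdot\Sigma=0$ a.e.~for $i=1,2$, so $\partial_1\Sigma(x')$ and $\partial_2\Sigma(x')$ lie in the two-dimensional subspace $\Sigma(x')^\perp\subset\R^3$. Consequently, the cross product $\partial_1\Sigma\times\partial_2\Sigma$ is parallel to $\Sigma$ a.e.~in $\omega$, and the vanishing of the triple product (which equals $(\partial_1\Sigma\times\partial_2\Sigma)\cdot\Sigma$) together with $|\Sigma|=1$ forces $\partial_1\Sigma\times\partial_2\Sigma=0$ a.e.~in $\omega$. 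This is exactly \eqref{parallel}. There is no genuine obstacle here; the only point requiring a small amount of care is the passage from the $L^1$-identity $\det\nabla u=1$ to the coefficient-wise identities on $\omega$, which is handled by the Fubini argument described above.
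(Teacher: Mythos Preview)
Your proof is correct and follows essentially the same approach as the paper: both extract the $x_3^2$-coefficient of the polynomial $x_3\mapsto\det\nabla u(x',x_3)$ (the paper phrases this as $\partial_3^2\det\nabla u=0$) to obtain $\det(\partial_1\Sigma\,|\,\partial_2\Sigma\,|\,\Sigma)=0$, and then use $\partial_i\Sigma\cdot\Sigma=0$ to upgrade this linear dependence to $\partial_1\Sigma\parallel\partial_2\Sigma$. The only cosmetic difference is that you argue via the cross product in $\Sigma^\perp$, whereas the paper takes a nontrivial linear combination and pairs it with $\Sigma$ to eliminate the $\Sigma$-coefficient.
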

\begin{proof} 
In fact, the condition~\eqref{parallel} results from a second-order linearization in $x_3$-direction of the incompressibility constraint, $\det \nabla u=1$ a.e.~in $\Omega$.
By the multi-linearity of the determinant, it follows for a.e.~ $x=(x',x_3)\in \Omega$ that
\begin{align*}
	0= \partial_3^2\det \nabla u(x) &= \partial_3^2 \det \big(\partial_1 d(x') + x_3\partial_1\Sigma(x')|\partial_2 d(x') + x_3\partial_2\Sigma(x')| \Sigma(x') \big) \\
	&= 2 \det \bigl(\partial_1\Sigma(x')| \partial_2 \Sigma(x')| \Sigma(x')\bigr).
\end{align*}
Thus, $\partial_1\Sigma, \partial_2\Sigma$ and $\Sigma$ are linearly dependent a.e.~in $\omega$, that is, for a.e.~$x'\in \omega$, there exists $\lambda=(\lambda_1, \lambda_2, \lambda_3)\in \R^3\setminus\{0\}$ such that
\begin{align}\label{556}
	\lambda_1\partial_1\Sigma + \lambda_2\partial_2\Sigma + \lambda_3\Sigma = 0. 
\end{align} 
Since $|\Sigma|=1$ and $\partial_i\Sigma\cdot\Sigma=0$ a.e.~in $\omega$ for $i\in\{1,2\}$, it follows from scalar multiplication of~\eqref{556} with $\Sigma$ that $\lambda_3=0$, which shows~\eqref{parallel}. 
\end{proof}

In the following, we provide a few illustrative examples of (compressible and incompressible) deformations of the form \eqref{555}, where $\Omega$ is always a suitable open cuboid.

\begin{example}[\boldmath{$\Sigma$} is constant]
If $\Sigma$ is constant, the image $u(\Omega)$ corresponds to the deformed cross section $d(\omega)$ thickened in the direction of $\Sigma$ by the hight of $\Omega$.
A first example of an incompressible deformation of this type is
	\begin{align}\label{paraboloid}
		u(x) = x_3\begin{pmatrix}0\\0\\1\end{pmatrix} + \begin{pmatrix}x_1\\x_2\\-x_1^2-x_2^2\end{pmatrix},\quad x\in\Omega,
	\end{align}
	where $\omega$ is transformed into (parts of) a paraboloid, see~Figure \ref{fig:sigma_constant}a). 
	Another classical example in this context is a simple shear in $e_2$-direction, i.e.,
	\begin{align}\label{shear}
		u(x) = x_3\begin{pmatrix}0\\0\\1\end{pmatrix} + \begin{pmatrix}x_1 \\ \gamma x_1 +x_2 \\ 0\end{pmatrix},\quad x\in\Omega,
	\end{align}
with shear parameter $\gamma\in\R$, see Figure \ref{fig:sigma_constant}b).

	\begin{figure}[h!]
		\centering
		\begin{subfigure}{.49\linewidth}
			\centering
			\includegraphics[height=5cm]{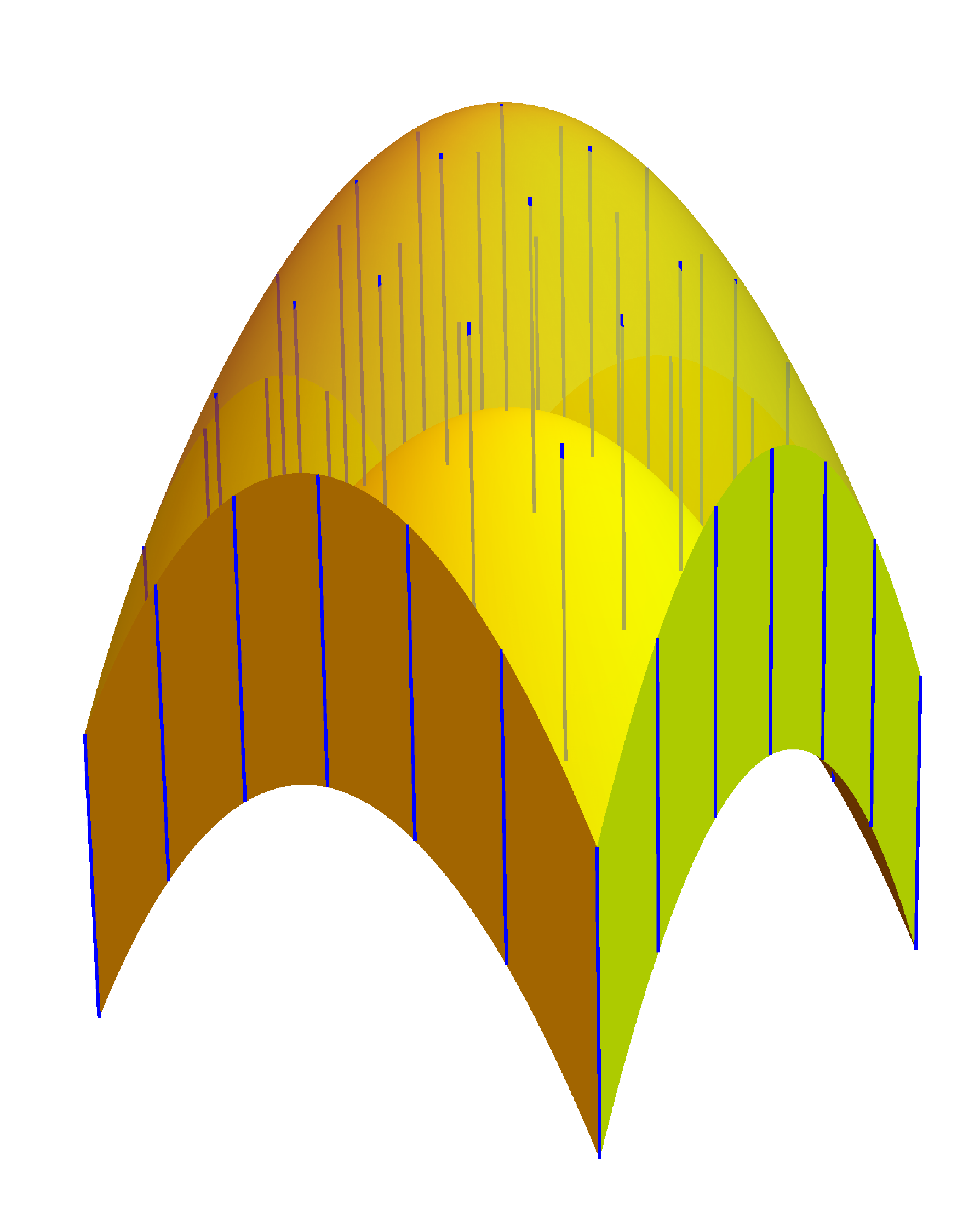}
			\put (-120,120) {a)}
		\end{subfigure}
		\begin{subfigure}{.49\linewidth}
			\centering
			\includegraphics[height=5cm]{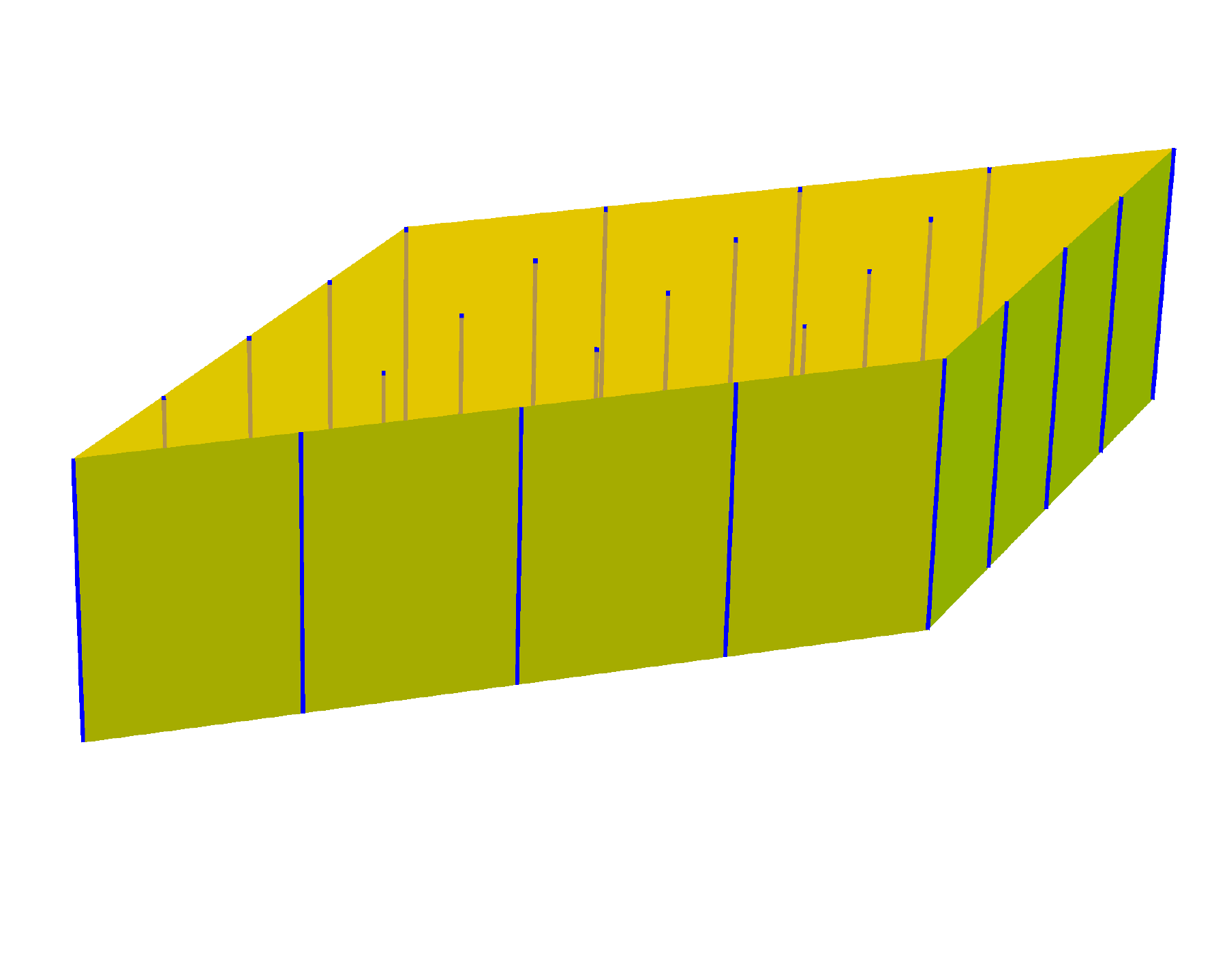}
			\put (-160,120) {b)}
		\end{subfigure}
		\caption{The images of $\Omega=(-1,1)^2\times (0,1)$ under a) \eqref{paraboloid} and  b) \eqref{shear} for $\gamma=1$.}\label{fig:sigma_constant}
	\end{figure}
\end{example}

\begin{example}[\boldmath{$\Sigma$} depends on only one variable]
Suppose that $\partial_2 \Sigma=0$. In this case, the direction along which the transformed cross section $d(\omega)$ is thickened depends in general non-trivially on $x_1$. 
	A locally volume-preserving deformation describing a twist in $e_1$-direction is given by 
	\begin{align}\label{twist}
		u(x)= x_3\begin{pmatrix} 0 \\ - \sin(\tfrac{x_1}{\pi}) \\[0.1cm] \cos(\tfrac{x_1}{\pi})\end{pmatrix} 
					+\begin{pmatrix} x_1\\ x_2 \cos(\tfrac{x_1}{\pi})\\[0.1cm] x_2\sin(\tfrac{x_1}{\pi})\end{pmatrix}, \quad x\in \Omega,
	\end{align}
	see~Figure \ref{fig:sigma_one}a).
	As a second example, consider
	\begin{align}\label{tyre}
		u(x) = \frac{x_3}{r}\begin{pmatrix}x_1\\0\\ \sqrt{r^2 - x_1^2}\end{pmatrix} + \begin{pmatrix}x_1\\x_2+e^{x_2}\\\sqrt{r^2-x_1^2}\end{pmatrix},\quad x\in\Omega,
	\end{align}
with $r>0$. Notice that $u$ as in~\eqref{tyre} is not incompressible despite satisfying the necessary condition \eqref{parallel}, since $u$ involves a stretch in $e_2$-direction, see Figure \ref{fig:sigma_one}b). 

	\begin{figure}[h!]
		\centering
		\begin{subfigure}{.495\linewidth}
			\centering
			\includegraphics[height=4.4cm]{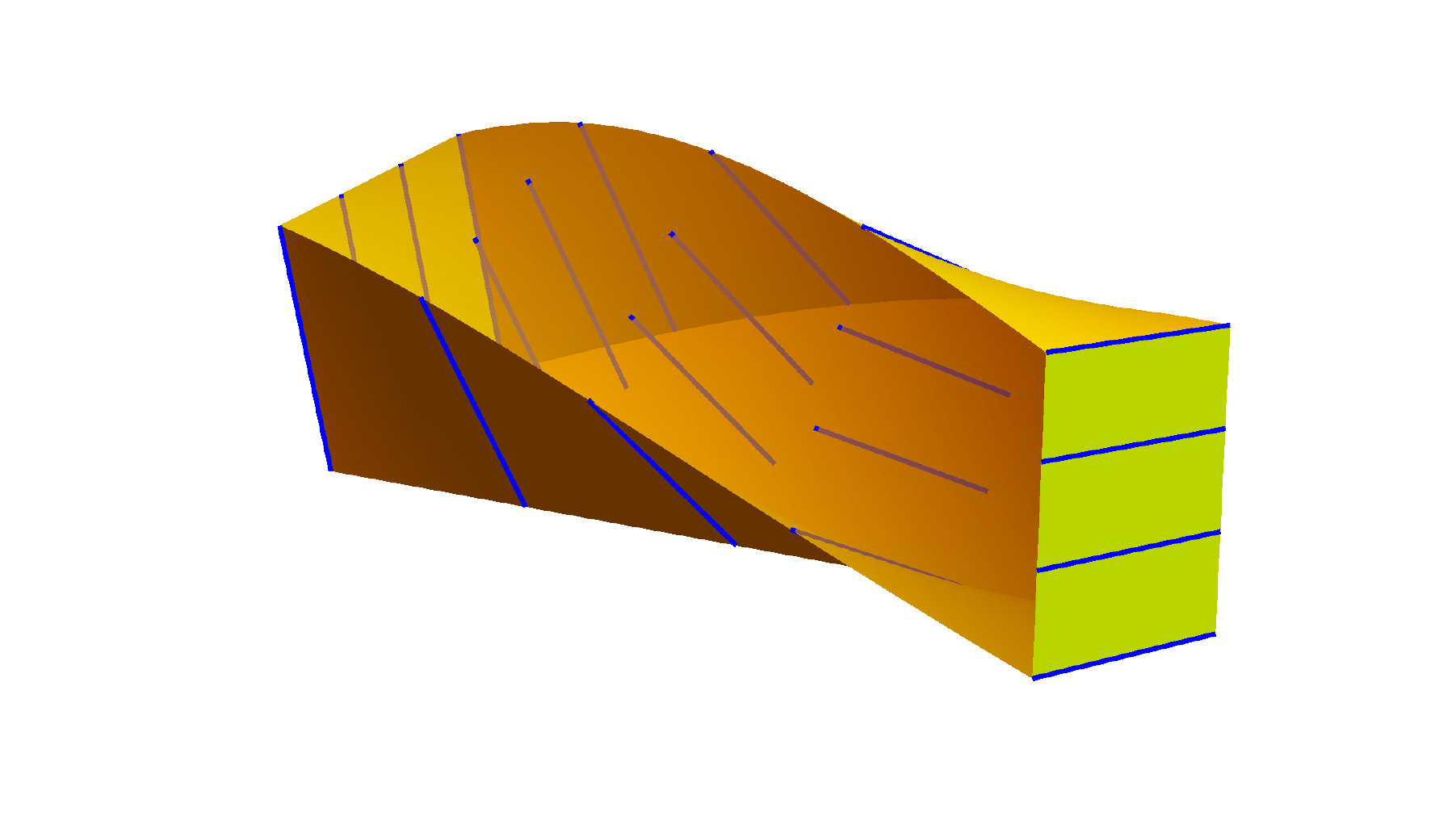}
			\put (-180,120) {a)}
		\end{subfigure}
		\begin{subfigure}{.495\linewidth}
			\centering
			\includegraphics[height=4.4cm]{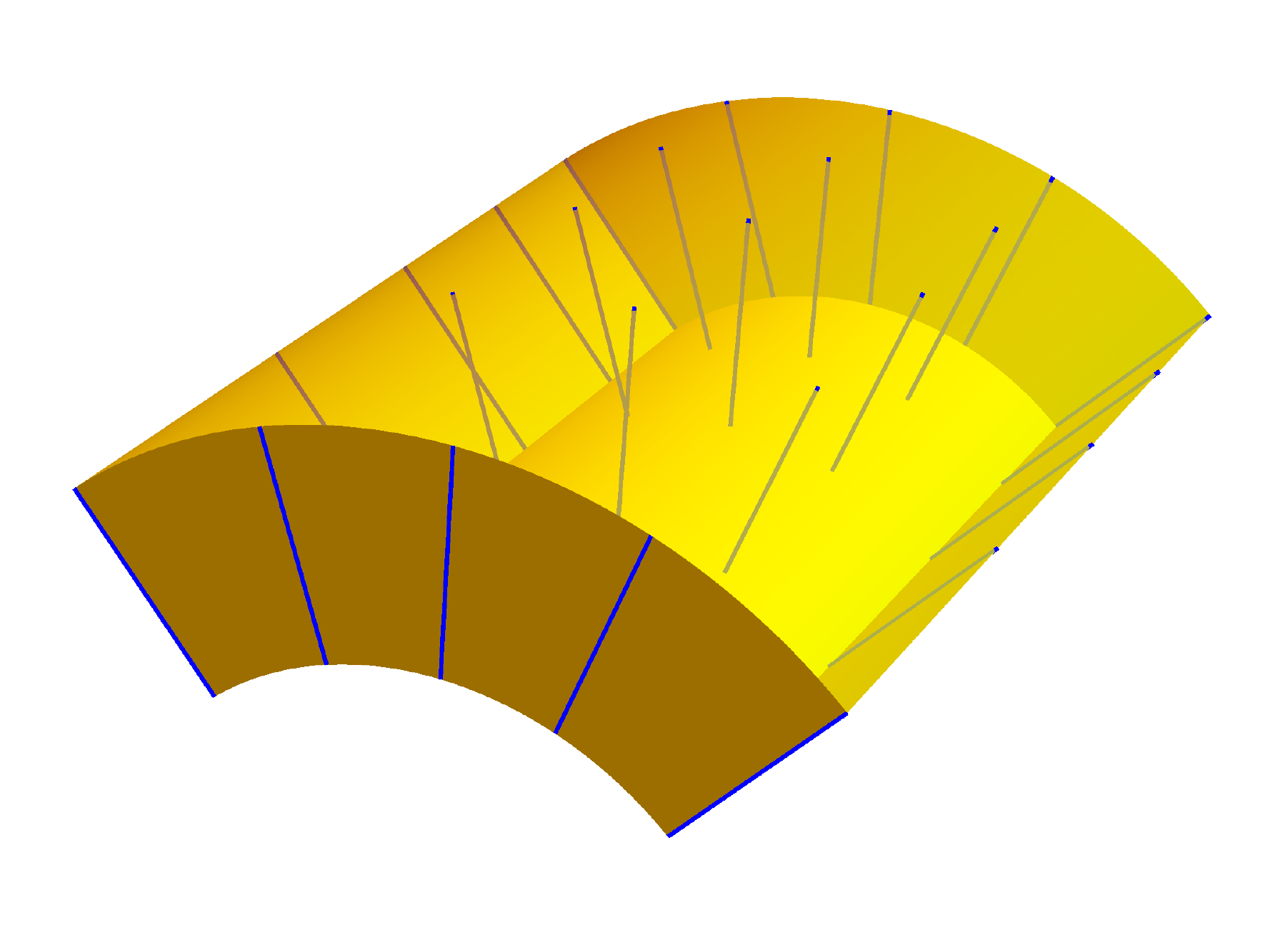}
			\put (-150,120) {b)}
		\end{subfigure}
		\caption{a) The image of $\Omega=(0,4)\times (0,1)^2$ under \eqref{twist}. b) The image of $\Omega=(-1,1)^2\times (0,1)$ under \eqref{tyre} with $r=\frac{3}{2}$.}\label{fig:sigma_one}
	\end{figure}
\end{example}

\begin{example}[\boldmath{$\Sigma$} depends on both cross-section variables] \label{ex:two_variables}
	Consider the following modification of \eqref{paraboloid},
	\begin{align}\label{hedgehog}
		u(x) = \frac{x_3}{\sqrt{4(x_1^2+x_2^2)+1}}\begin{pmatrix}
		2x_1\\ 2x_2 \\ 1
		\end{pmatrix} + \begin{pmatrix}
			x_1 \\ x_2 \\ -x_1^2 - x_2^2
		\end{pmatrix},\quad x\in\Omega.
	\end{align}
	In this case, the vector field $\Sigma$ is orthogonal to the surface of the paraboloid $x'\mapsto (x_1,x_2,-x_1^2-x_2^2)$, see Figure \ref{fig:sigma_two}a).
	Since $\partial_1 \Sigma$ is not parallel to $\partial_2 \Sigma$, $u$ as in~\eqref{hedgehog} is not locally volume-preserving according to Lemma~\ref{lem:parallel}.
	Another example that does not satisfy the condition~\eqref{parallel} either is
	\begin{align}\label{trophy}
		u(x) = \frac{x_3}{\sqrt{2}\sqrt{x_1^2+x_2^2 +1}}\begin{pmatrix}-x_1-x_2\\x_1-x_2\\\sqrt{2}\end{pmatrix} +\begin{pmatrix}x_1\\x_2\\0\end{pmatrix}, \quad x\in \Omega,	
	\end{align}
depicted in Figure \ref{fig:sigma_two}b).
	\begin{figure}[h!]
		\centering
		\begin{subfigure}{.49\linewidth}
			\centering
			\includegraphics[height=5.5cm]{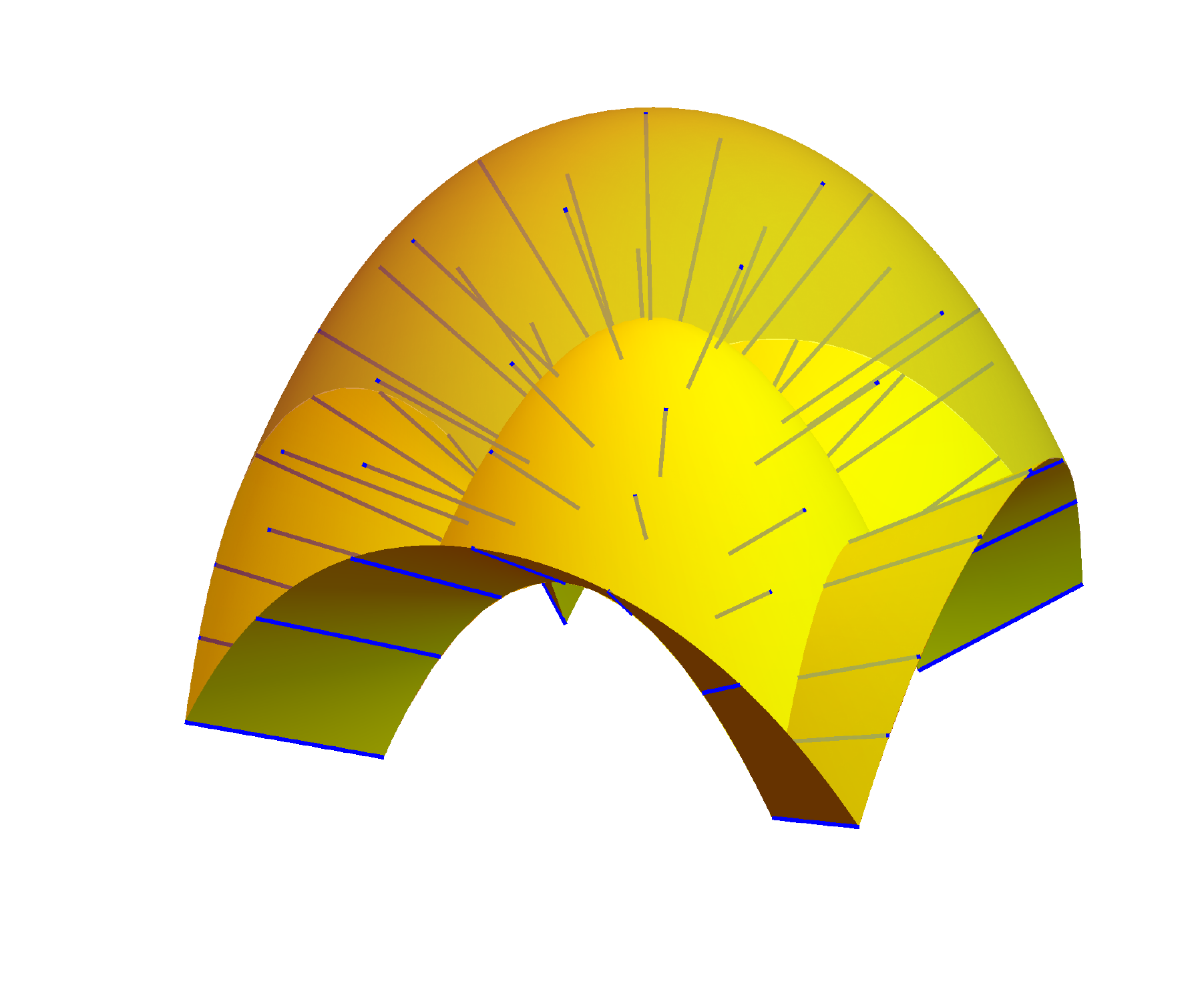}
			\put (-160,140) {a)}
		\end{subfigure}
		\begin{subfigure}{.49\linewidth}
			\centering
			\includegraphics[height=5.5cm]{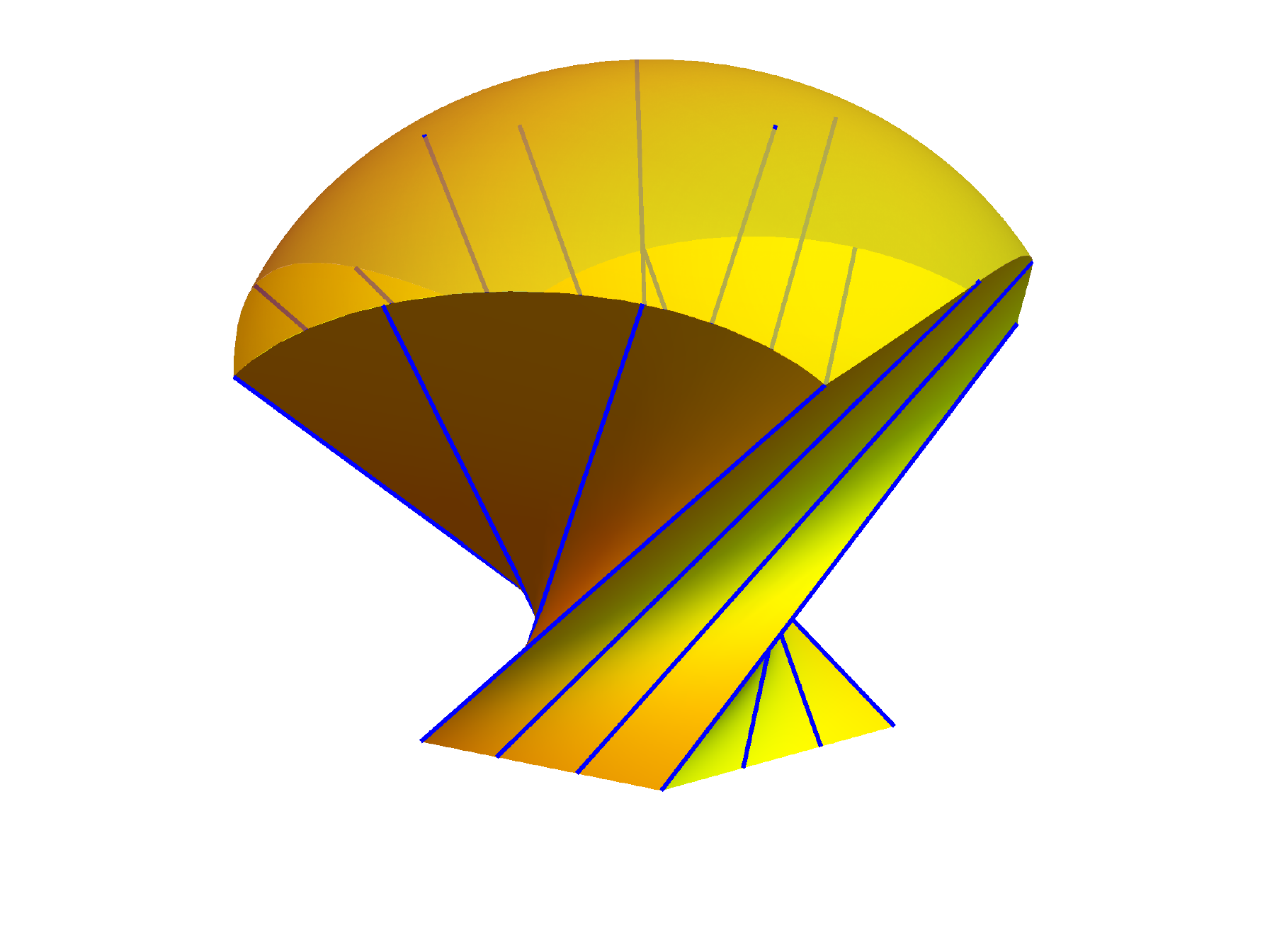}
			\put (-180,140) {b)}
		\end{subfigure}
		\caption{a) The image of $\Omega=(-1,1)^2\times (0,1)$ under \eqref{hedgehog}. b) The image of $\Omega=(-1,1)^2\times (0,4)$ under \eqref{trophy}.}\label{fig:sigma_two}
	\end{figure}
\end{example}

\begin{remark}[Comparison with layered composites]\label{rem:comparison}
	Let $\Omega= (0,L_1)\times (0,L_2)\times (0,L_3)$ be an open cuboid. 
	As proven in~\cite[Theorem~1.1]{ChK20}, the admissible effective deformations of a composite with rigid layers, or mathematically speaking, the weak $W^{1,p}$-limits of sequences $(u_\eps)_\eps$ such that
	\begin{align*}
		u_\eps\in \Bcal_\eps := \{u\in W^{1,p}(\Omega;\R^3) : \nabla u \in \SO(3) \text{ a.e.~in } X_\eps^{\rm rig} \cap \Omega\}
	\end{align*}
	for $\eps>0$, with $X_\eps^{\rm rig}$ as introduced in \eqref{layers}, are characterized by
	\begin{align*}
		\Bcal_0 := \{u\in W^{1,p}(\Omega;\R^3) : &\ u(x) = R(x_1)x+b(x_1) \text{ for a.e.~$x\in\Omega$} \\ 
		&\text{ with $R\in W^{1,p}((0,L_1);\SO(3)), b\in W^{1,p}((0,L_1);\R^3)$} \}.
	\end{align*}
	Comparing with~the characterization of $\Acal_0$ in Theorem~\ref{theo:characterization} shows that $\Bcal_0\subset \Acal_0$.  This observation backs the intuition that composites with rigid fiber reinforcements are more flexible in their deformation behavior than those with rigid layers. 
		
	It is evident that the macroscopic deformations in \eqref{shear} and \eqref{twist} can be attained also by layered materials, meaning that they are elements of $\Bcal_0$. 
	On the other hand, the deformations presented in Example \ref{ex:two_variables}, as well as \eqref{paraboloid} and \eqref{tyre} show that the inclusion $\Bcal_0\subset\Acal_0$ is strict, which underlines the higher flexibility fiber-reinforced materials.
\end{remark}

\section{Regularization in the cross-section variables}\label{sec:regularization}

This section is concerned with the proof of Theorem \ref{theo:rigidity_reg}, where we additionally assume that second derivatives in the cross-section variables  
of weakly convergent sequences $(u_\eps)_\eps$ with $u_\eps\in\Acal_\eps$ exist, and are $L^p$-bounded  uniformly in $\eps$. 
In this case, the weak $W^{1,p}$-limit of $(u_\eps)_\eps$ corresponds to a rigid body motion. 
We begin our analysis with two auxiliary results.

First, the above-mentioned assumption of higher regularity allows us to improve the estimates in Lemma \ref{lem:neighboring_rotations} with the help of a one-dimensional Poincar{\'e} estimate. 
\begin{lemma}\label{lem:neighboring_rotations_reg}
	Let $E\subset\R^3$, $d\in\R^2$ and $w_1, w_2: E\to \R^3$ be as in Lemma \ref{lem:neighboring_rotations}. 
If $v\in W^{1,p}(E;\R^3)$ with $p\geq 1$ satisfies  \begin{align*}
	\max_{i, j\in \{1,2\}}\norm{\partial_i\partial_j v}_{L^p(E;\R^3)}^p  <\infty 
	\end{align*} 
	and if
	\begin{align*}
		v = w_{1} \text{ a.e.~in } E \cap \big((0,\mu)\times \R^2\big)\quad\text{and}\quad v = w_{2} \text{ a.e.~in } E \cap \big((L_1-\mu,L_1)\times \R^2\big)
	\end{align*}
	for some $\mu>0$, then
	\begin{align}\label{est61}
		\int_{E}|\partial_d^2 v|^p \dd{x} \geq \frac{C|E|L_3^p}{(1+|m|^p)^{2} L_1^{2p}}|(A_2-A_1)e_3|^p
	\end{align}
	with a constant $C>0$ that depends only on $p$. 
\end{lemma}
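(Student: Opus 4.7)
The plan is to follow the blueprint of Lemma~\ref{lem:neighboring_rotations} closely. The key new ingredient is that the boundary conditions on slabs of positive width $\mu$ now provide information not only on $v$ but also on $\nabla v$ at the two ends of $E$ in the $d$-direction. A single additional integration then converts the $L^p$-bound on $\partial_d^2 v$ into a bound on the boundary-value difference of the auxiliary function $u$, yielding the extra factor $L_1^{-p}$ visible in~\eqref{est61} compared to~\eqref{neighbor_affine}.

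As in Lemma~\ref{lem:neighboring_rotations}, a standard density argument reduces the claim to smooth $v$ attaining the affine slab data classically. Performing the same change of variables $u(y) := v(y_1, y_2 + m y_1, y_3)$ on $Q := (0, L_1) \times (0, L_2) \times (0, L_3)$, whose Jacobian equals $1$, yields, after composition, $\partial_1^2 u = (1 + m^2)\, \partial_d^2 v$ on $Q$. On the slabs $\{y_1 \in (0, \mu)\} \cap Q$ and $\{y_1 \in (L_1 - \mu, L_1)\} \cap Q$, the function $u$ inherits the affine structure of $v$, with $\partial_1 u$ equal to the constants $A_1(e_1 + m e_2)$ and $A_2(e_1 + m e_2)$, respectively.

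The main new step is the identity, valid for a.e.~$(y_2, y_3) \in (0, L_2) \times (0, L_3)$,
\begin{equation*}
u(L_1, y_2, y_3) - u(0, y_2, y_3) - L_1 A_1(e_1 + m e_2) = \int_0^{L_1} (L_1 - s)\, \partial_1^2 u(s, y_2, y_3) \dd s,
\end{equation*}
which follows from the fundamental theorem of calculus applied twice (once to recover $\partial_1 u$ from $\partial_1^2 u$, using $\partial_1 u|_{y_1 = 0} = A_1(e_1 + m e_2)$, and once more to recover $u$ from $\partial_1 u$) together with Fubini. H\"older's inequality applied to the right-hand side then yields the pointwise estimate
\begin{equation*}
\bigl| u(L_1, y_2, y_3) - u(0, y_2, y_3) - L_1 A_1(e_1 + m e_2) \bigr|^p \leq L_1^{2p - 1} \int_0^{L_1} |\partial_1^2 u(s, y_2, y_3)|^p \dd s.
\end{equation*}

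The remainder of the proof mirrors Lemma~\ref{lem:neighboring_rotations}. After integrating the pointwise bound in $y_2$ and $y_3$ and substituting the explicit affine forms of $u|_{y_1 = 0}$ and $u|_{y_1 = L_1}$, the left-hand side takes the form $|y_2 (A_2 - A_1) e_2 + y_3 (A_2 - A_1) e_3 + c|^p$ with $c \in \R^3$ independent of $y_2$ and $y_3$. Jensen's inequality in $y_2$ followed by the one-dimensional minimization in $y_3$ already carried out at~\eqref{second} extracts a factor proportional to $L_2 L_3^{p+1} |(A_2 - A_1) e_3|^p$. Assembling all scaling constants, using $|E| = L_1 L_2 L_3$ together with the elementary bound $(1 + m^2)^p \leq C(1 + |m|^p)^2$, then gives~\eqref{est61}. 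The only genuinely new reasoning beyond Lemma~\ref{lem:neighboring_rotations} lies in the H\"older step above, which is the source of the quadratic dependence on $L_1^{-p}$; the remaining manipulations are straightforward adaptations.
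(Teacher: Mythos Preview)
Your argument is correct and yields exactly the stated estimate~\eqref{est61}. The route, however, differs from the paper's. The paper first normalizes to $w_1=0$, observes that then $\partial_1 u$ vanishes on the slab $(0,\mu)\times(0,L_2)\times(0,L_3)$, and applies a one-dimensional Poincar\'e inequality in the $y_1$-variable to obtain
\[
\int_Q |\partial_1^2 u|^p \dd y \;\geq\; C\,L_1^{-p}\int_Q |\partial_1 u|^p \dd y,
\]
after which it simply invokes Lemma~\ref{lem:neighboring_rotations} as a black box for the lower bound on $\int_Q|\partial_1 u|^p\dd y$. You instead use the Taylor-type identity with integral remainder together with H\"older to bound the boundary difference directly by $\partial_1^2 u$, and then redo the tail end of the proof of Lemma~\ref{lem:neighboring_rotations} (Jensen in $y_2$ and the minimization~\eqref{second}). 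Both arguments are equivalent in strength; the paper's is slightly more modular since it reduces the second-order statement to the first-order one in a single Poincar\'e step, whereas yours is self-contained and makes the origin of the extra $L_1^{-p}$ factor transparent through the kernel $(L_1-s)$ in the remainder formula.
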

\begin{proof}
	We may assume without loss of generality that $w_1=0$, otherwise consider $\tilde{v}=v-w_1$ in place of $v$.
	Then, by assumption, $\partial_d v=0$ a.e.~in $E \cap \big((0,\mu)\times \R^2\big)$, and thus, with $u(y)=v(y_1,y_2+my_1,y_3)$ for $y\in Q=(0,L_1)\times(0,L_2)\times (0,L_3)$,
	\begin{align*}
		\partial_1 u=0\quad \text{ a.e.~in $(0, \mu)\times (0, L_2)\times (0, L_3)$.}
	\end{align*}
	
	Via the same change of variables as in Lemma~\ref{lem:neighboring_rotations},
	we can therefore deduce with Poincar\'e's inequality, applied to $\partial_1 u$ in $y_1$-direction, that
	\begin{align*}
		\begin{split}
			\int_E |\partial_d^2 v|^p \dd x &= \frac{1}{(1+m^2)^p} \int_{0}^{L^3}\int_0^{L^2} \int_{0}^{L_1}|\partial_1(\partial_1 u)|^p \dd y_1 \dd y_2\dd y_3  \\
			& \geq \frac{C}{(1+m^2)^{p}L_1^{p}} \int_Q |\partial_1 u|^p \dd y = \frac{C}{(1+m^2)^{p/2}L_1^{p}} \int_E |\partial_d v|^p \dd x,
		\end{split}
	\end{align*}
	where $C^{-1}L_1^{p}$ with $C>0$ depending only on $p$ is the optimal Poincar{\'e} constant. 
	In combination with Lemma \ref{lem:neighboring_rotations}, which implies
	\begin{align*}
		\int_E |\partial_d v|^p \dd x \geq \frac{C|E|L_3^p}{(1+|m|^p)L_1^{p}}|A_2e_3|^p,
	\end{align*}
	we obtain the desired estimate.
\end{proof}

Second, we prove that strongly $L^p$-convergent functions that are constant rotations on the rigid components have a constant limit in the set of rotations.
In particular, when applied to gradient fields, the next result can be seen as a companion to Proposition \ref{prop:rigidity_general} for strongly converging sequences in $W^{1,p}(\Omega;\R^3)$. 

\begin{lemma}\label{lem:vector_field}
	Let $U\subset \R^3$ be an open set and let $(V_\eps)_\eps\subset L^p(U;\R^{3\times 3})$ with $p\geq 1$ satisfy
	\begin{align*}
		V_\eps \in \SO(3) \text{ a.e.~in } \eYrig\cap U
	\end{align*}	
	for all $\eps$.
	If $V\in L^p(U;\R^{3\times 3})$ is such that  $V_\eps \to V$ in $L^p(U;\R^{3\times 3})$ as $\eps\to 0$, then 
	\begin{align*}
		V\in \SO(3) \text{ a.e.~in } U.
	\end{align*}
\end{lemma}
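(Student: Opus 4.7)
The plan is to combine the vanishing of $\dist(V_\eps, \SO(3))$ on $\eYrig \cap U$ with a uniform lower bound on the limit fiber volume fraction. The crucial ingredient is that by hypothesis \eqref{square-inside}, each fiber cross-section satisfies $|\omega_\eps^k| \geq \delta^2 \eps^2$, so the volume fraction of the rigid components inside every scaled unit cell is uniformly bounded below by $\delta^2$. After extracting a subsequence, Banach-Alaoglu provides $\mathbbm{1}_{\eYrig \cap U} \weaklystar \theta$ in $L^\infty(U)$ for some $\theta \in L^\infty(U; [0,1])$. For any open ball $B \Subset U$, a direct estimate using $|\omega_\eps^k| \geq \delta^2 \eps^2$ yields $\int_B \mathbbm{1}_{\eYrig \cap U} \dd x \geq \delta^2 |B| + o(1)$, hence $\int_B \theta \dd x \geq \delta^2 |B|$; Lebesgue's differentiation theorem then gives $\theta \geq \delta^2$ almost everywhere in $U$.

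Next, I will set $f_\eps := \dist(V_\eps, \SO(3))$ and $f := \dist(V, \SO(3))$. The $1$-Lipschitz continuity of the distance function from $\SO(3)$ yields the pointwise bound $|f_\eps - f| \leq |V_\eps - V|$, so that $f_\eps \to f$ in $L^p(U)$. Applying H\"older's inequality to the elementary estimate $|f_\eps^p - f^p| \leq p|f_\eps - f|(|f_\eps|^{p-1} + |f|^{p-1})$ for $p > 1$ (and noting the case $p=1$ is trivial), I conclude that $f_\eps^p \to f^p$ strongly in $L^1(U)$.

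Since the hypothesis $V_\eps \in \SO(3)$ a.e.~on $\eYrig \cap U$ is equivalent to $f_\eps^p \mathbbm{1}_{\eYrig \cap U} = 0$ a.e., it follows that $\int_U f_\eps^p \mathbbm{1}_{\eYrig \cap U} \dd x = 0$ for every $\eps$. Passing to the limit via the strong--weak$*$ pairing between $L^1(U)$ and $L^\infty(U)$ gives
\begin{align*}
	0 = \lim_{\eps \to 0} \int_U f_\eps^p \mathbbm{1}_{\eYrig \cap U} \dd x = \int_U f^p \theta \dd x.
\end{align*}
Since $\theta \geq \delta^2 > 0$ almost everywhere in $U$, this forces $f = 0$ a.e., i.e., $V \in \SO(3)$ a.e.~in $U$.

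The main obstacle is establishing the lower bound $\theta \geq \delta^2$ on the weak$*$ limit of the fiber indicator; this is the essential step that encodes the non-degeneracy assumption \eqref{square-inside} and ensures that no region of $U$ is asymptotically void of rigid material. Without it, the conclusion could fail on regions where the fiber density degenerates in the limit. Once this bound is secured, the remainder reduces to a clean application of strong-weak$*$ duality.
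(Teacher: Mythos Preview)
Your argument is correct and proceeds along a genuinely different route from the paper. The paper argues by contradiction: assuming $\dist(V,\SO(3))>\gamma$ on a small cube $Q$, it observes that $\eYrig\cap Q\subset\{|V_\eps-V|>\gamma\}$, uses the square condition~\eqref{square-inside} to bound $|\eYrig\cap Q|$ uniformly away from zero, and reaches a contradiction with convergence in measure (which follows from strong $L^p$-convergence). Your approach is direct: you encode the non-degeneracy of the fiber distribution once and for all in the inequality $\theta\geq\delta^2$ for the weak$^*$ limit of $\mathbbm{1}_{\eYrig\cap U}$, upgrade $f_\eps\to f$ in $L^p$ to $f_\eps^p\to f^p$ in $L^1$, and conclude via the strong--weak$^*$ pairing. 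Both arguments rest on exactly the same geometric input (the lower bound $|\omega_\eps^k|\geq\delta^2\eps^2$), but package it differently. The paper's version is more elementary in that it avoids extracting weak$^*$ limits and the $L^1$-convergence of $p$th powers; your version is cleaner in that it isolates the essential volume-fraction bound as a single reusable statement and would transfer verbatim to more general rigid inclusions with a uniform lower density. One minor remark: your claim $\int_B\mathbbm{1}_{\eYrig\cap U}\dd x\geq\delta^2|B|+o(1)$ deserves a line of justification (e.g., via a Riemann-sum argument for the slice lengths against the cell-wise density bound), and the step $f_\eps^p\to f^p$ in $L^1$ tacitly uses $f,f_\eps\in L^p(U)$, which requires $|U|<\infty$; since the lemma is only applied in the paper to bounded cylinders this is harmless, or one may simply localize to balls $B\Subset U$.
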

\begin{proof}
	Assume to the contrary that there is a $\gamma>0$ and an open cube $Q\subset U$ such that 
	\begin{align*}
		\dist(V(x), \SO(3)) >\gamma \quad\text{ for a.e.~$x\in Q$. }
	\end{align*}
	Since 
	\begin{align*}
		\gamma < \dist(V(x),\SO(3)) \leq |V(x) - V_\eps(x)|\quad \text{ for a.e.~$x\in \eYrig\cap Q$,}
	\end{align*}
	it follows that, up to a set of measure zero,
	\begin{align*}
		\eYrig\cap Q \subset \{x \in Q: |V_\eps(x) - V(x)| > \gamma\}. 
	\end{align*} 
	Recalling the definition of $Y_{\eps}^{\rm rig}$,  each fiber cross-section $w_\eps^k$ with $\eps>0$ and $k\in \Z^2$ contains a square $S_\eps^k$ with $|S_\eps^k|=\delta^2\eps^2$, where $\delta\in (0, 1-2\alpha)$ and $\alpha\in (0, \frac{1}{2})$ are given parameters, cf.~\eqref{square-inside}. We define the index set
	\begin{align*}
		J_\eps=\{k\in \Z^2: S_\eps^k\subset Q\},
	\end{align*}
	and observe that, for sufficiently small $\eps$, the cardinality of $J_\eps$ scales like $\eps^{-2}$, precisely, $\#J_\eps\geq c\eps^{-2}$ with a geometric constant $c>0$ depending only on $Q$. 
	Consequently,
	\begin{align}\label{99}
		|\{x \in Q: |V_\eps(x) - V(x)| > \gamma\}| & \geq |\eYrig\cap Q| =\sum_{k\in \Z^2} |(\omega_k^\eps\times \R)\cap Q| \\ &\geq  |Q|^{1/3}\sum_{k\in J_\eps} |S_\eps^k| =|Q|^{1/3} \# J_\eps \eps^2\delta^2 \geq c\delta^2|Q|^{1/3} >0\nonumber
	\end{align}		
	for all $\eps$ small enough.
	
	On the other hand, the strong convergence of $(V_\eps)_\eps$ in $L^p(U;\R^{3\times 3})$ implies its convergence in measure on $Q$ and hence, in particular,
	\begin{align*}
		|\{x \in Q : |V_\eps(x) - V(x)| > \gamma\}| \to 0 \quad \text{ as } \eps\to 0. 
	\end{align*} 
	This, however, is in contradiction with~\eqref{99}.
\end{proof}

We are now in the position to present the proof of Theorem \ref{theo:rigidity_reg}.

\begin{proof}[Proof of Theorem \ref{theo:rigidity_reg}] 
Clearly, the assumptions of Proposition \ref{prop:rigidity_general} are satisfied. We therefore know that the limit function $u$ can be represented as 
	\begin{align}\label{sigma_d_repr2}
		u(x) =  x_3\Sigma(x')  + d(x'), \quad x\in \Omega,
	\end{align} 
 with $\Sigma\in W^{1,p}(\omega;\Scal^2)$ and $d\in W^{1,p}(\omega;\R^3)$.
In order to prove that the additional condition~\eqref{second_derivatives} forces $u$ to be a rigid body motion, we show first that $\Sigma$ is constant and then conclude with the help of Lemma \ref{lem:vector_field} applied to a suitably constructed matrix-valued field. \medskip
	
	\textit{Step~1: $\Sigma$ is constant.}
To see this, we refine the proof of Proposition~\ref{prop:rigidity_general} by exchanging the estimates of Lemma~\ref{lem:neighboring_rotations} with the stronger ones from Lemma~\ref{lem:neighboring_rotations_reg}. This improves the key estimate~\eqref{estimate_frechet} by a factor $\eps^p$. 

In more detail, let us adopt the definitions and quantities introduced in the proof of Proposition~\ref{prop:rigidity_general}, up to one exception:  the parallelograms $\hori{\eps}{k}$ and $\verti{\eps}{k}$ are determined by the two parallel boundary lines
	\begin{align*}
		a_\eps^k + \eps\left(\{\tfrac{\delta}{4}\} \times (-\tfrac{\delta}{4},\tfrac{\delta}{4})\right) \quad \text{and} \quad
			a_\eps^{k+e_1} + \eps\left(\{-\tfrac{\delta}{4}\}\times (-\tfrac{\delta}{4},\tfrac{\delta}{4})\right),
	\end{align*}
	and
	\begin{align*}
		a_\eps^k + \eps\left((-\tfrac{\delta}{4},\tfrac{\delta}{4})\times\{\tfrac{\delta}{4}\}\right) \quad \text{and} \quad
			a_\eps^{k+e_2} + \eps\left((-\tfrac{\delta}{4},\tfrac{\delta}{4})\times \{-\tfrac{\delta}{4}\}\right),
	\end{align*}
	respectively, 
	see Figure~\ref{fig:parallelograms2} for an illustration in the special case when the centers $a_\eps^k$ are periodically arranged.
	\begin{figure}[h!]
		\centering
		\begin{tikzpicture}
			\begin{scope}[scale=0.8, shift={(-0.8,-0.85)}]
	 			\draw [fill=gray!15!white] (1.5,1) [out=0,in=180] to (2.5,1.5) [out=0,in=225] to (3.8,1.3) [out=45,in=-90] to (3.8,2.6) [out=90,in=-90] to (4,3.7) [out=90,in=0] to (2.6,4) [out=180,in=0] to (1.5,4) [out=180,in=60] to (1,2) [out=240,in=180] to (1.5,1);	
	 		\end{scope}
	 		\begin{scope}[scale=0.8, shift={(-0.8,2.55)}]
	 			\draw [fill=gray!15!white] (1.5,1) [out=0,in=180] to (2.5,1.5) [out=0,in=225] to (3.8,1.3) [out=45,in=-90] to (3.8,2.6) [out=90,in=-90] to (4,3.8) [out=90,in=0] to (2.6,4) [out=180,in=0] to (1.5,4) [out=180,in=60] to (1,2) [out=240,in=180] to (1.5,1);	
	 		\end{scope}
	 		\begin{scope}[scale=0.8, shift={(2.55,-0.85)}]
	 			\draw [fill=gray!15!white] (1.5,1) [out=0,in=180] to (2.5,1.5) [out=0,in=225] to (3.8,1.3) [out=45,in=-90] to (3.8,2.6) [out=90,in=-90] to (4,3.7) [out=90,in=0] to (2.6,4) [out=180,in=0] to (1.5,4) [out=180,in=60] to (1,2) [out=240,in=180] to (1.5,1);	
	 		\end{scope}
			\begin{scope}[scale=2.7]
				\draw (0,0) rectangle (1,1);
				\draw (1,0) rectangle (2,1);
				\draw (0,1) rectangle (1,2);
				\draw (0.5,0) node [anchor=north] {\small $\eps(k+Y)$};
				\draw (1.5,0) node [anchor=north] {\small $\eps(k+e_1+Y)$};
				\draw (0.5,2) node [anchor=south] {\small $\eps(k+e_2+Y)$};
				
				\draw [fill=orange!60!white] (0.625,0.375) rectangle (1.375,0.625);
				\draw [fill=orange!60!white] (0.375,0.625) rectangle (0.625,1.375);
				\draw (0.5,1) node {\small $\verti{\eps}{k}$};
				\draw (1,0.5) node {\small $\hori{\eps}{k}$};
				
				\draw (0.25,0.25) rectangle (0.75,0.75);
				\draw (1.25,0.25) rectangle (1.75,0.75);
				\draw (0.25,1.25) rectangle (0.75,1.75);
				\draw [fill=black](0.5,0.5) circle (0.3pt);
				\draw [fill=black](1.5,0.5) circle (0.3pt);
				\draw [fill=black](0.5,1.5) circle (0.3pt);
				\draw (0.5,0.5) node [anchor=north] {\small $a_\eps^k$};
				\draw (1.55,0.5) node [anchor=north] {\small $a_\eps^{k+e_1}$};
				\draw (0.5,1.5) node [anchor=south] {\small $a_\eps^{k+e_2}$};
				\draw (0.18,0.17) node {\small $\omega_\eps^k$};
				
				\draw [<->] (0.25,1.8) --++ (0.5,0);
				\draw (0.5,1.87) node {\small $\eps\delta$};
	 		\end{scope}
		\end{tikzpicture}
		\caption{Illustration of the rectangles $\hori{\eps}{k}$ and $\verti{\eps}{k}$, connecting the horizontally and vertically neighboring squares $S_\eps^k=a_\eps^k + \eps(-\textstyle\frac{\delta}{2},\frac{\delta}{2})^2\subset \omega_\eps^k\subset \eps(k+Y)$ with a small area of overlap.}\label{fig:parallelograms2}
	\end{figure}
	Then, in analogy to Step~2 of Proposition~\ref{prop:rigidity_general}, with~\eqref{est61} in place of~\eqref{neighbor_affine}, one obtains for $\Sigma_\eps$ as in~\eqref{Sigma_e} that
	\begin{align}\label{higher_reg}
		\int_{U'} |\Sigma_{\eps}&(x' +\xi ) - \Sigma_{\eps}(x')|^p \dd x' \nonumber\\
		&\leq  C \eps^p  \left(|\xi|^p+ \eps^{p}\right) \big(\norm{\partial_1^2 u_\eps}_{L^p(\Omega;\R^3)}^p + \norm{\partial_2^2 u_\eps}_{L^p(\Omega;\R^3)}^p\big)\leq C\eps^p \left(|\xi|^p + \eps^{p}\right)
	\end{align} for any vector $\xi\in\R^2$ with $|\xi|<\frac{1}{2}\dist(U',\partial \omega)$. 
	In light of the convergence $\Sigma_\eps\to \Sigma$ in $L^p(U';\R^3)$ according to~\eqref{Sigma_convergence}, it follows from \eqref{higher_reg} that
	\begin{align*}
		\int_{U'} |\Sigma(x' +\xi ) - \Sigma(x')|^p \dd x' = 0. 
	\end{align*}
	Hence, $\Sigma$ is constant in $U'$, and  by exhaustion also in $\omega$. This implies also that 
 	$\nabla u$ is independent of $x_3$. We will prove in the next step that $\nabla u$ is constant and takes a value in $\SO(3)$. \medskip

	\textit{Step 2: Applying Lemma~\ref{lem:vector_field}.}
	Consider for $\eps>0$ the auxiliary matrix field
	\begin{align*}
		V_\eps: U'\to \R^{3\times 3}, \quad x' \mapsto \dashint_0^L \big(\nabla'u_\eps(x',x_3) | \Sigma_\eps(x')\big) \dd x_3;
	\end{align*}
	by trivial extension in $x_3$, one can also view $V_\eps$ as defined on $U$. 
	Since $\nabla u_\eps \in \SO(3)$ and $\partial_3 u_\eps = \Sigma_\eps$ a.e.~in $\eYrig\cap U$, we infer that
	\begin{align*}
		V_\eps \in \SO(3)\text{ a.e.~ in $\eYrig\cap U$}.
	\end{align*}
  	Moreover, we will see below that
	\begin{align}\label{convergence_Veps}
		V_\eps \to \nabla u \text{ in $L^p(U;\R^{3\times 3})$.}
	\end{align}
	These two observations allow us to conclude from Lemma~\ref{lem:vector_field} that
	$\nabla u\in\SO(3)$ a.e.~in $U$. Due to its gradient structure, however, the matrix field $\nabla u$ already has to coincide with a constant rotation on $U$ by Reshetnyak's theorem. This property extends to $\Omega$ by exhaustion, showing that $u$ is necessarily a rigid body motion on $\Omega$. This gives the desired statement.

	It only remains to prove~\eqref{convergence_Veps}. To this end, we start by observing that the first two columns $V_\eps':= (V_\eps e_1|V_\eps e_2)$ of $V_\eps$ satisfy
	\begin{align*}
		\int_{U'} |V'_\eps(x')|^p \dd x'= \int_{U'} \Big|\dashint_0^L\nabla' u_\eps(x',x_3) \dd x_3\Big|^p \dd x 
			\leq \frac{1}{L}\int_U |\nabla' u_\eps(x)|^p \dd x  =\frac{1}{L} \norm{\nabla'u_\eps}^p_{L^p(U;\R^{3\times 2})}.
	\end{align*} 
	due to Jensen's inequality.
	Similarly, if we recall that $u_\eps$ is twice weakly differentiable in the cross-section variables by assumption, 
	\begin{align*}
		\int_{U'} |\nabla' V'_\eps(x')|^p \dd x'\leq 4 \max_{i,j \in\{1,2\}}\int_{U'} \Big|\dashint_0^L \partial_i\partial_j u_\eps(x',x_3) \dd x_3\Big|^p \dd x 
			\leq \frac{4}{L} \max_{i,j \in\{1,2\}}\norm{\partial_i\partial_j u_\eps}^p_{L^p(U;\R^3)}.
	\end{align*} 
	In view of~\eqref{second_derivatives}, the sequence $(V_\eps')_\eps$ is therefore bounded in $W^{1,p}(U';\R^{3\times 2})$ and we can extract a non-relabeled subsequence that converges to some $V'\in W^{1,p}(U';\R^{3\times 2})$ as $\eps \to 0$, both weakly in $W^{1,p}(U';\R^{3\times 2})$ and, via Sobolev embedding, strongly to $L^p(U';\R^{3\times 2})$. 
	As $\nabla' u_\eps \weakly \nabla' u$ in $L^p(\Omega;\R^{3\times 2})$, it follows that
	\begin{align*}
		V'_\eps = \dashint_0^L\nabla'u_\eps(\cdot, x_3) \dd x_3 \weakly \dashint_0^L \nabla' u(\cdot,x_3) \dd x_3  \quad \text{ in } L^p(U';\R^3),
	\end{align*}
	and thus,
	\begin{align*}
		V' = \displaystyle\dashint_0^L \nabla' u(\cdot, x_3) \dd x_3 = \nabla'u;
	\end{align*}
	the last identity uses  that $\nabla u$ is independent of $x_3$, cf.~\eqref{sigma_d_repr2} and Step~1. 
	Together with~\eqref{Sigma_convergence} and \eqref{x_3_derivative}, we finally conclude
	\begin{align*}
		V_\eps = (V_\eps', \Sigma_\eps) \to (\nabla'u|\Sigma) = \nabla u  \text{ in } L^p(U';\R^{3\times 3}),
	\end{align*}
	which is~\eqref{convergence_Veps}. 
\end{proof}

\begin{remark}[Local bounds on the second gradients]
	Notice that assumption~\eqref{second_derivatives} in the statement of Theorem~\ref{theo:rigidity_reg} can be replaced by the weaker condition that 
	\begin{align*}
		\sup_\eps \max_{i, j\in \{1,2\}}\norm{\partial_i\partial_j u_\eps}_{L^p(K;\R^3)} <\infty
	\end{align*}
	for any compact subset $K\subset \Omega$;
	 this is immediate to verify, considering that our proof involves essentially only local arguments in the cross section.
\end{remark}

\subsection*{Acknowledgements}
The authors would like to thank Fabian Ziltener for helpful discussions related to the lifting property, and David Chiron for sharing valuable insights into the proof of
a lifting result in fiber bundles for Sobolev functions.

CK and AR were supported  by the Dutch Research Council NWO through the project TOP2.17.01. Most of this work was done while CK and AR were affiliated with Utrecht University, and CK acknowledges the partial support by the Westerdijk Fellowship program. 


\bibliographystyle{abbrv}
\bibliography{DECKAR}
\end{document}